\newcommand{\msc}[2][2000]{%
  \let\@oldtitle\@title%
  \gdef\@title{\@oldtitle\footnotetext{#1 \emph{Mathematics subject
        classification.} #2}}%
}
\theoremstyle{plain}
\newtheorem{theorem}{Theorem} [section]
\newtheorem{lemma}[theorem]{Lemma}
\newtheorem{proposition}[theorem]{Proposition}
\newtheorem{hyp}[theorem]{Assumption}
\theoremstyle{remark}
\newtheorem{remark}[theorem]{Remark}
\def\dis
\def\O{\mathcal O}
\def\C{{\mathbb C}}
\def\R{{\mathbb R}}
\def\N{{\mathbb N}}
\def\O{\mathcal O}
\def\({\left(}
\def\){\right)}
\def\<{\left\langle}
\def\>{\right\rangle}
\def\le{\leqslant}
\def\ge{\geqslant}
\def\Tend#1#2{\mathop{\longrightarrow}\limits_{#1\rightarrow#2}}
\def\d{{\partial}}
\def\eps{\varepsilon}
\def\si{{\sigma}}
\DeclareMathOperator{\RE}{Re}
\DeclareMathOperator{\IM}{Im}
\numberwithin{equation}{section}
\begin{document}

\title[Coherent states in logNLS]{Propagation of coherent states in
  the logarithmic Schr\"odinger equation} 
\author[R. Carles]{R\'emi Carles}
\address{CNRS\\ IRMAR - UMR
  6625\\ F-35000 Rennes, France}
\email{Remi.Carles@math.cnrs.fr}
\author[F. Dong]{Fangyuan Dong}
\address{School of Mathematics and Physics\\
University of Science and Technology Beijing\\
Xueyuan Road 30, Haidian\\ Beijing 100083, China}
\email{math.dongfy@gmail.com}

\begin{abstract}
We consider the logarithmic Schr\"odinger equation in a semiclassical
scaling, in the presence of a smooth, at most quadratic, external
potential. For initial data under the form of a single coherent state, we
identify the notion of criticality as far as the nonlinear coupling
constant is concerned, in the semiclassical limit. In the critical
case, we prove a general error estimate, and improve it in the case of
initial Gaussian profiles. In this critical case, when the initial
datum is the sum of two Gaussian coherent states with different
centers in phase space, we prove a nonlinear superposition principle. 
\end{abstract}
\thanks{The second author is on leave from the University
of Science and Technology in Beijing, thanks to
some funding from the Chinese Scholarship Council. A CC-BY public
copyright license has been applied by the authors to the present
document and will be applied to all subsequent versions up to the
Author Accepted Manuscript arising from this submission. }  
\maketitle

\section{Introduction}
\label{ref:intro}

We consider the logarithmic Schr\"odinger equation in the
semiclassical r\'egime $\eps\in( 0,1]$, 
\begin{equation}\label{eq:logNLS}
    i \varepsilon\partial_t \psi^\varepsilon+\frac{\varepsilon^2}{2}
    \Delta \psi^\varepsilon=V(x)\psi^\varepsilon+\lambda
    \varepsilon^\alpha\psi^\varepsilon \log
    |\psi^\varepsilon|^2\quad;\quad \psi^\eps_{\mid t=0}=\psi_0^\eps,
\end{equation}
where $\psi^{\varepsilon}=\psi^{\varepsilon}(t, x)$ is complex-valued,
$x \in \mathbb{R}^d$ with $d \ge 1$, $\lambda \in \mathbb{R}
\backslash\{0\}$, and $\alpha\ge 1$ (see the discussion
below). Throughout this paper, the
external potential $V=V(x)$ satisfies the 
following assumption:
\begin{hyp}\label{hyp:V}
  The external potential $V$ is smooth, real-valued, and at most quadratic:
  \begin{equation*}
 V\in C^\infty(\R^d;\R)\quad \text{and}\quad   \d^\beta V\in
 L^\infty\(\R^d\),\quad \forall |\beta|\ge 2.  
  \end{equation*}
\end{hyp}
To lighten notations, we consider $t\ge 0$ only, which is no
restriction as the equation is time reversible.

The logarithmic Schr\"odinger equation was introduced in \cite{BiMy76}, and has
been used since in various physical fields, such as 
quantum mechanics \cite{yasue},
quantum optics \cite{BiMy76,hansson}, nuclear
physics \cite{Hef85}, Bohmian mechanics \cite{DMFGL03}, effective
quantum gravity \cite{Zlo10}, theory of 
superfluidity and  Bose-Einstein condensation (BEC) \cite{BEC}.
As proposed in \cite{Zlo10,Zlo11}, the logarithmic nonlinearity may
extend quantum
mechanics thanks to a nonlinear model, likely to help understand
quantum gravity. In \cite{Bouharia2015}, the presence of an harmonic trap
was considered, in order to describe logarithmic BEC.
\smallbreak

We mention two mathematical properties associated with the logarithmic
nonlinearity in Schr\"odinger equations. First, and this is the main
reason why this model was introduced in \cite{BiMy76}, it is the only
nonlinearity that provides a tensorization property in the
multidimensional setting. Suppose $d\ge 2$ and that $V$ separates
coordinates, in the sense that
\begin{equation*}
  V(x) =\sum_{j=1}^d V_j(x_j).
\end{equation*}
If the initial datum $\psi_0^\eps$ is a tensor
product,
\begin{equation*}
  \psi_0^\eps(x) =\prod_{j=1}^d \varphi_j^\eps(x_j),
\end{equation*}
then the solution to \eqref{eq:logNLS} is given by
\begin{equation*}
  \psi^\eps(t,x) =\prod_{j=1}^d \psi^\eps_{j}(t,x_j), 
\end{equation*}
where each $\psi^\eps_j$ solves a one-dimensional equation,
\begin{equation*}
   i\eps\d_t \psi^\eps_j +\frac{\eps^2}{2} \d_{x_j}^2 \psi^\eps_j =V_j (x_j)\psi^\eps_j+ \lambda
   \eps^\alpha \psi_j^\eps \log|\psi_j^\eps|^2  \quad ;\quad
   \psi^\eps_{j\mid t=0} =\varphi^\eps_{j} . 
 \end{equation*}
The second property, unusual in a nonlinear setting as well, is that the
size of the solution is somehow irrelevant, in the sense that if, for
$k\in \C$, $\Psi_k^\eps$ denotes the solution to \eqref{eq:logNLS} with
initial datum $k\psi_0^\eps$ instead of $\psi_0^\eps$, then we have
\begin{equation*}
  \Psi_k^\eps(t,x) = k\psi^\eps(t,x) e^{-i\lambda \eps^{\alpha-1}t \log|k|^2}.
\end{equation*}
We will meet this property later, when considering the semiclassically
critical case for \eqref{eq:logNLS}.
\bigbreak

The semiclassical limit $\eps\to 0$ for \eqref{eq:logNLS} was
addressed by Ferriere \cite{FeBKW} for $\psi_0^\eps$ a WKB state (also
known as 
Lagrangian state), $\psi_0^\eps(x)= a_0(x)e^{i\phi_0(x)/\eps}$, in the
case $V=0$ with $\alpha=0$ (which corresponds to a supercritical case
as far 
as WKB analysis is concerned). In the case where $V$ is a bounded
potential, $\alpha=0$, $\lambda<0$, and $\psi_0^\eps$ is a
concentrating Gausson, 
\begin{equation*}
  \psi_0^\eps(x) = e^{ix\cdot p_0/\eps}R\(\frac{x-q_0}{\eps}\),\quad
  R(x) = e^{\frac{1+d}{2}}e^{\lambda |x|^2},
\end{equation*}
Ardila and Squassina \cite{ArSq18} proved that, locally uniformly in
time,  the solution remains
concentrated on the Gausson, and the center in phase
space evolves according to classical mechanics:
\begin{equation*}
  \psi^\eps(t,x) = e^{ix\cdot
    p(t)/\eps+i\theta^\eps(t)}R\(\frac{x-q(t)}{\eps}\)
  +w^\eps(t,x),\quad \eps^{-d/2}\|w^\eps(t)\|_{L^2(\R^d)}=\O(\eps), 
\end{equation*}
where $\theta^\eps(t)\in \R$ and 
\begin{equation*}
  \dot q(t) = p(t),\quad \dot p(t) = -\nabla V\(q(t)\)\quad ;\quad
  q(0)=q_0,\quad p(0)=p_0.
\end{equation*}
The above Hamilton system also plays a crucial role in the propagation
of coherent states considered in the present paper.
Indeed, we assume that the initial data $\psi^\eps_0$ is a
localized  wave packet   of the form
\begin{equation}\label{eq:CI}
\psi^\eps_0(x)=\eps^{-d/4} u_0\left(\frac{x-q_0}{\sqrt\eps}\right)
e^{i{(x-q_0)\cdot p_0/\eps}},\quad
q_0,p_0\in \R^d,
\end{equation}
and so the parameter $\alpha$ measures the strength of the nonlinear
interaction as $\eps\to 0$. Our goal is to describe the behavior of
$\psi^\eps$ as $\eps\to 0$, according to the value of $\alpha$. Note
that this initial datum is such that
$\|\psi_0^\eps\|_{L^2}=\|u_0\|_{L^2}$ is independent of $\eps>0$. In
view of the above remark, replacing the factor $\eps^{-d/4}$ with any
other power of $\eps$ leads to an explicit change in the expression of
$\psi^\eps(t,x)$ by a gauge transform. 
\smallbreak

In the linear case $\lambda=0$, the mathematical study of the
semiclassical limit $\eps\to 0$ in \eqref{eq:logNLS}--\eqref{eq:CI}
goes back to \cite{Hepp}, then developed in \cite{Hag80} and many
other contributions; we refer to \cite{robert2021coherent} for a
comprehensive presentation of the propagation of coherent states in
the linear case. The propagation of coherent for nonlinear
Schr\"odinger equations begins in \cite{CaFe11} (power nonlinearity)
and \cite{APPP11,CaCa11} (Hartree type nonlinearity). In this
nonlinear setting, the size of the initial data or, equivalently, the
size of the parameter in front of the nonlinearity, in terms of
$\eps$, is a crucial parameter in the leading order behavior of
$\psi^\eps$ as $\eps\to 0$. Below, we make this discussion precise in
the case of the logarithmic nonlinearity, and derive the notion of
criticality for  the parameter $\alpha$. 

\subsection{Notion of criticality}  
First, we proceed formally, and  we seek an approximate solution of
the equation\eqref{eq:logNLS} 

\begin{equation*}
\psi_{\mathrm{app}}^{\varepsilon}(t, x)=\frac{1}{\varepsilon^{d / 4}} u\left(t, \frac{x-q(t)}{\sqrt{\varepsilon}}\right) e^{i\phi(t,x)/\varepsilon},
\end{equation*}
where
\begin{equation}\label{eq:phi_lin}
  \phi(t,x)=\phi_{\rm lin}(t,x):=S(t)+p(t)\cdot(x-q(t)),
\end{equation}
and the rescaled space variable is 
$$
y=\frac{x-q(t)}{\sqrt{\varepsilon}}.
$$
Classically, we consider the Taylor expansion of $V(x)$ in terms of
the new space variable $y$ (see e.g. \cite{Carles2021book}):
\begin{equation*}
 V(x)= V\(q(t)+y\sqrt\eps\)=V(q(t))+\sqrt{\varepsilon} y \cdot \nabla V(q(t))+\frac{\varepsilon}{2}\left\langle y, \nabla^2 V(q(t)) y\right\rangle+\mathcal{O}\left(\varepsilon^{3 / 2}\right).
\end{equation*}
Plugging this expansion into \eqref{eq:logNLS}, we can measure how
close $\psi_{\rm app}^\eps$ is to solve \eqref{eq:logNLS}, by computing
\begin{equation}\label{eq:rewrite initial}
\begin{aligned}
    &i \varepsilon \partial_t \psi_{\mathrm{app}}^{\varepsilon}+\frac{\varepsilon^2}{2} \Delta \psi_{\mathrm{app}}^{\varepsilon}-V(x) \psi_{\mathrm{app}}^{\varepsilon}-\lambda\varepsilon^\alpha\psi_{\mathrm{app}}^{\varepsilon}\log |\psi_{\mathrm{app}}^{\varepsilon}|^2\\&=\frac{1}{\varepsilon^{d / 4}}\left(B_{0}+\sqrt{\varepsilon} B_{1}+\varepsilon B_{2} 
    +B_{\rm nl}+\O\(\varepsilon^{3/2}\)\right) e^{i\phi / \varepsilon},
\end{aligned}
\end{equation}
where $\phi$ is defined in \eqref{eq:phi_lin}, and
\begin{equation*}
  \begin{aligned}
&B_0  =-u\left(\dot{S}(t)-p(t) \cdot \dot{q}(t)+\frac{|p(t)|^2}{2}+V(q(t))\right), \\
&B_1 =-i(\dot{q}(t)-p(t)) \cdot \nabla u-y \cdot(\dot{p}(t)+\nabla V(q(t))) u, \\
&B_2 =i \partial_t u+\frac{1}{2} \Delta u-\frac{1}{2}\left\langle y, \nabla^2 V(q(t)) y\right\rangle u,\\
&B_{\rm nl}= -\lambda\varepsilon^\alpha u\log|u|^2+\lambda\frac{d}{2}u\varepsilon^\alpha\log\varepsilon.
\end{aligned}
\end{equation*}
As indicated by the notations, the nonlinear contribution is present
only in the term $B_{\rm nl}$.

When $\alpha>1$, $B_{\rm nl}$ is incorporated into the reminder term,
and we write
\begin{align*}
    &i \varepsilon \partial_t \psi_{\mathrm{app}}^{\varepsilon}+\frac{\varepsilon^2}{2} \Delta \psi_{\mathrm{app}}^{\varepsilon}-V(x) \psi_{\mathrm{app}}^{\varepsilon}-\lambda\varepsilon^\alpha\psi_{\mathrm{app}}^{\varepsilon}\log |\psi_{\mathrm{app}}^{\varepsilon}|^2\\&=\frac{1}{\varepsilon^{d / 4}}\left(B_{0}+\sqrt{\varepsilon} B_{1}+\varepsilon B_{2} +\O\(\varepsilon^{\min(3/2,\alpha)}\)\right) e^{i\phi / \varepsilon}.
\end{align*}
Due to the presence of the parameter $\eps$ in front of the factor
$\d_t \psi^\eps_{\rm app}$, the contribution of the remainder is
expected to be $\O\(\eps^{\min(1/2,\alpha-1)}\)$ in $L^2$, and we
  expect $\psi^\eps$ to be well approximated by the linear solution 
\begin{equation}\label{eq:evolution-linear}
i \varepsilon \partial_t \varphi^\varepsilon+\frac{\varepsilon^2}{2}
\Delta \varphi^\varepsilon=V(x) \varphi^\varepsilon\quad ;\quad
\varphi^\eps_{\mid t=0}=\psi_0^\eps. 
\end{equation}
We cancel out, successively, $b_1$, $b_0$ and $b_2$.
Setting $b_1=0$, we impose
\begin{equation*}
    \dot{q}(t)=p(t), \quad \dot{p}(t)=-\nabla V(q(t)) .
\end{equation*}
To agree with the initial coherent state \eqref{eq:CI}, we impose
$q(0)=q_0$ and $p(0)=p_0$, so that $(q, p)$ is given by the classical
Hamiltonian flow, 
\begin{equation}\label{eq:hamilton}
    \dot{q}=p, \quad \dot{p}=-\nabla V(q)\quad ; \quad q(0)=q_0, \quad p(0)=p_0,
\end{equation}
Setting $B_0=0$ and $S(0)=0$, we obtain the classical action (the action of classical mechanics).
\begin{equation}\label{eq:action}
    S(t)=\int_0^t\left(\frac{|p(s)|^2}{2}-V(q(s))\right) d s,
\end{equation}
Finally, setting $B_2=0$ and changing the notation $u$ to $v$ yields
\begin{equation}\label{eq:v}
    i \partial_t v+\frac{1}{2} \Delta v=\frac{1}{2}\left\langle y,
      \nabla^2 V(q(t)) y\right\rangle v \quad ; \quad v_{\mid t=0}=u_0. 
\end{equation}
To make notations consistent, and for future reference, we thus introduce
\begin{equation}\label{eq:phi_app}
    \varphi^\varepsilon_{\rm app}(t,x)=\frac{1}{\varepsilon^{d /
        4}}v\(t,\frac{x-q(t)}{\sqrt{\varepsilon}}\) e^{i\phi_{\rm lin}(t, x)/\eps},
\end{equation}
where $v$ solves \eqref{eq:v}. 

In the case $\alpha=1$, the contribution of $B_{\rm nl}$ is comparable
with that of $B_2$. The second term in $B_{\rm nl}$ may be viewed as a
constant (in $(t,x)$) potential, and is removed by a gauge
transform. This is a manifestation of the effect of the size in
logarithmic nonlinearity described above. We therefore modify the phase
$\phi$ and now write
\begin{equation}\label{eq:psiapp}
    {\psi}_{\rm app}(t,x)=\frac{1}{\varepsilon^{d /
        4}}u\(t,\frac{x-q(t)}{\sqrt{\varepsilon}}\)
    e^{i\phi_{\rm nl}(t, x)/\varepsilon},
\end{equation}
where 
\begin{equation*}
  \phi_{\rm nl}(t, x):=\phi_{\rm lin}(t,x) 
-\lambda\frac{d}{2}t\varepsilon\log\varepsilon= 
S(t)+p(t) \cdot(x-q(t))-\lambda\frac{d}{2}t\varepsilon\log\varepsilon.
\end{equation*}
Since we have not modified the expression of $B_0$ and $B_1$, $q$, $p$
and $S$ are still given by \eqref{eq:hamilton} and
\eqref{eq:action}. On the other hand, the envelope equation becomes
\begin{equation}\label{eq:u}
    i \partial_t u+\frac{1}{2} \Delta u=\frac{1}{2}\left\langle y,
      \nabla^2 V(q(t)) y\right\rangle u+\lambda u\log|u|^2 \quad ;
    \quad u_{\mid t=0}=u_0.
\end{equation}
 The above potential is quadratic in $y$. This implies, as we will see
 in the sequel, that, like in the linear
 case \eqref{eq:v} (as noticed initially in 
 \cite{Hel75}, see also \cite{Hag80,robert2021coherent}), the
 flow of this equation preserves Gaussian structures: if $a$ is a
 Gaussian, then so is $u(t,\cdot)$ for all $t\in \R$. In a nonlinear
 context, this property is a feature of the logarithmic
 nonlinearity. We note that this is also a feature of the coherent
 states approximation, that even if $V$ is not quadratic (in space),
 the potential in \eqref{eq:u} is. The potential in \eqref{eq:u}
 depends on time precisely when $V$ is not exactly quadratic, and the
 explicit computation for Gaussian data can be viewed as a feature of
 the coherent states approximation.
 \smallbreak
 
The case $\alpha>1$ will therefore be referred to as
\emph{subcritical}, and the case $\alpha=1$ as \emph{critical}. We
note that the case $\alpha<1$ seems to be out of reach in this setting
(as well as the supercritical case $\alpha<\alpha_c$ in \cite{CaFe11}
for power-like nonlinearity;
the case of Hartree type nonlinearity is different, as pointed out in
\cite{CaCa11} and \cite{Carles2021book}, and some supercritical cases can
be described in the case of a smooth, bounded kernel). 
\smallbreak

We emphasize that for a power-like nonlinearity $\eps^\alpha
|\psi^\eps|^{2\si}\psi^\eps$, $\si>0$, with
initial data like in \eqref{eq:CI}, the critical case is given by
$\alpha_c=1+\frac{d\si}{2}$, 
\begin{equation*}
    i \varepsilon\partial_t \psi^\varepsilon+\frac{\varepsilon^2}{2}
    \Delta \psi^\varepsilon=V(x)\psi^\varepsilon+
    \varepsilon^{1+d\si/2}|\psi^\varepsilon|^{2\si}\psi^\varepsilon 
    \quad;\quad \psi^\eps_{\mid t=0}=\psi_0^\eps,
\end{equation*}
so the critical case $\alpha=1$ in \eqref{eq:logNLS} may be viewed as
a natural candidate for the limit $\si\to 0$ (see also
\cite{WangZhang2019,GMS-p} for other
evidences in this direction). In the case of Hartree
equation with a homogeneous kernel ($\gamma>0$), the critical case is
(\cite{CaCa11}) 
\begin{equation*}
    i \varepsilon\partial_t \psi^\varepsilon+\frac{\varepsilon^2}{2}
    \Delta \psi^\varepsilon=V(x)\psi^\varepsilon+\lambda
    \varepsilon^{1+\gamma/2}\(\frac{1}{|x|^\gamma}\ast|\psi^\varepsilon|^2\)
    \psi^\varepsilon 
    \quad;\quad \psi^\eps_{\mid t=0}=\psi_0^\eps.
\end{equation*}
On the other hand, for Hartree type equations with a smooth kernel $K$
(bounded as well as its derivatives), the critical case is
(\cite{CaCa11}), like for \eqref{eq:logNLS},
\begin{equation*}
    i \varepsilon\partial_t \psi^\varepsilon+\frac{\varepsilon^2}{2}
    \Delta \psi^\varepsilon=V(x)\psi^\varepsilon+\lambda
    \varepsilon\(K\ast|\psi^\varepsilon|^2\)
    \psi^\varepsilon 
    \quad;\quad \psi^\eps_{\mid t=0}=\psi_0^\eps.
\end{equation*}

\subsection{Main results}
Denote
\begin{equation*}
  \Sigma^k=H^k \cap \mathcal{F}\left(H^k\right)=\left\{f \in
    H^k\left(\mathbb{R}^d\right), \quad x \mapsto|x|^k f(x) \in
    L^2\left(\mathbb{R}^d\right)\right\} ,
\end{equation*}
endowed with natural norm.
When $k=1$, we simply denote $\Sigma^1$ by $\Sigma$.

Our first result shows that indeed, the case $\alpha>1$ is
subcritical, in the sense that $\psi^\eps$ is well approximated by the
linear solution $\varphi^\eps$, up to some Ehrenfest
time (a time of order $\log\frac{1}{\eps}$). The solution
$\varphi^\eps$ is also well approximated by $\varphi_{\rm app}^\eps $
up to some Ehrenfest
time. 

\begin{proposition}[Subcritical case]\label{prop:subcritical}
  Suppose that $\alpha>1$. If $u_0\in \Sigma$, then
  for any $0<\delta<\alpha-1$, there exists $C>0$ independent of $\eps$ such that,
  with $\varphi^\eps$ solution to the linear equation
  \eqref{eq:evolution-linear}, we have 
  \begin{equation*}
    \|\psi^\eps(t)-\varphi^\eps(t)\|_{L^2(\R^d)}\lesssim
  \eps^{\alpha-1-\delta}e^{Ct},\quad t\ge 0.
\end{equation*}
In particular, there exists $c_0>0$ independent of
$\eps$  such that
\begin{equation*}
  \sup_{0\le t\le c_0\log\frac{1}{\eps}}\|\psi^\eps(t)-
  \varphi^\eps(t)\|_{L^2(\R^d)}\Tend \eps 0 0 .  
\end{equation*}
If in addition $u_0\in \Sigma^2$,  there exists $C>0$ independent of
$\eps$ such that, with $\varphi_{\rm app}^\eps$ defined in
\eqref{eq:phi_app},  $v$ given by \eqref{eq:v} and $\phi_{\rm lin}$ by
\eqref{eq:phi_lin}, we have
\begin{equation*}
  \|\varphi^\eps(t)-\varphi_{\rm app}^\eps(t)\|_{L^2(\R^d)}\lesssim
  \eps^{1/4}e^{Ct},\quad t\ge 0.
\end{equation*}
Finally, if $u_0\in \Sigma^3$,  there exists $C>0$ independent of
$\eps$ such that
\begin{equation*}
  \|\varphi^\eps(t)-\varphi_{\rm app}^\eps(t)\|_{L^2(\R^d)}\lesssim
  \sqrt \eps e^{Ct},\quad t\ge 0.
\end{equation*}
\end{proposition}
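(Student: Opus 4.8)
I would prove the three bounds in turn; only the first, comparing $\psi^\eps$ with the linear flow $\varphi^\eps$, is genuinely nonlinear. Set $w^\eps=\psi^\eps-\varphi^\eps$; subtracting \eqref{eq:evolution-linear} from \eqref{eq:logNLS} it solves $i\eps\d_t w^\eps+\tfrac{\eps^2}{2}\Delta w^\eps-Vw^\eps=\lambda\eps^\alpha\psi^\eps\log|\psi^\eps|^2$ with $w^\eps_{\mid t=0}=0$. Pairing with $\overline{w^\eps}$ in $L^2$ and taking imaginary parts kills the linear terms; using $\IM\int|\psi^\eps|^2\log|\psi^\eps|^2=0$ and $\overline{w^\eps}=\overline{\psi^\eps}-\overline{\varphi^\eps}$ one gets $\eps\tfrac{d}{dt}\|w^\eps\|_{L^2}^2=-2\lambda\eps^\alpha\IM\int\bigl(\psi^\eps\log|\psi^\eps|^2\bigr)\overline{\varphi^\eps}$. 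The decisive ingredient — which absorbs the failure of $z\mapsto z\log|z|^2$ to be locally Lipschitz — is the classical algebraic inequality $\bigl|\IM\bigl((z\log|z|^2-\zeta\log|\zeta|^2)\overline{(z-\zeta)}\bigr)\bigr|\le C|z-\zeta|^2$: writing $\psi^\eps\log|\psi^\eps|^2=\varphi^\eps\log|\varphi^\eps|^2+\bigl(\psi^\eps\log|\psi^\eps|^2-\varphi^\eps\log|\varphi^\eps|^2\bigr)$ and expanding $\overline{\varphi^\eps}=\overline{\psi^\eps}-\overline{w^\eps}$, the right-hand side becomes $\lesssim\eps^\alpha\bigl(\|w^\eps\|_{L^2}^2+\|\varphi^\eps\log|\varphi^\eps|^2\|_{L^2}\,\|w^\eps\|_{L^2}\bigr)$.

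It remains to bound $\|\varphi^\eps(t)\log|\varphi^\eps(t)|^2\|_{L^2}$ uniformly in $\eps$. Writing $\varphi^\eps(t,x)=\eps^{-d/4}e^{i\phi_{\rm lin}(t,x)/\eps}\,\Phi^\eps\bigl(t,(x-q(t))/\sqrt\eps\bigr)$, the profile obeys $\|\Phi^\eps(t)\|_{\Sigma}\lesssim e^{Ct}\|u_0\|_{\Sigma}$: indeed $\|y\Phi^\eps(t)\|_{L^2}$ and $\|\nabla_y\Phi^\eps(t)\|_{L^2}$ equal $\|\tfrac{x-q(t)}{\sqrt\eps}\varphi^\eps(t)\|_{L^2}$ and $\|(-i\sqrt\eps\nabla-p(t)/\sqrt\eps)\varphi^\eps(t)\|_{L^2}$, and applying the linear operator to these two quantities produces a coupled system whose source is $\eps$ times (coefficients bounded by $\|\nabla^2V\|_{L^\infty}$) times the other component, so an $L^2$ energy estimate and Grönwall give the exponential bound, with $\O(1)$ data at $t=0$ since $u_0\in\Sigma$. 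Since $|\varphi^\eps(t,x)|^2=\eps^{-d/2}|\Phi^\eps(t,(x-q(t))/\sqrt\eps)|^2$, an $L^2$-isometric rescaling and the elementary bounds $s^2(\log s^2)^2\lesssim s^{2-\delta}$ for $s\le1$, $\lesssim s^{2+\delta}$ for $s\ge1$ (with Gagliardo–Nirenberg type inequalities controlling $\|\Phi^\eps\|_{L^{2\pm\delta}}$ by $\|\Phi^\eps\|_{\Sigma}$) give $\|\varphi^\eps(t)\log|\varphi^\eps(t)|^2\|_{L^2}\lesssim e^{Ct}\log\tfrac1\eps$. Dividing the differential inequality by $\|w^\eps\|_{L^2}$ after the usual regularization and applying Grönwall with zero data yields $\|w^\eps(t)\|_{L^2}\lesssim\eps^{\alpha-1}\log\tfrac1\eps\,e^{Ct}\le C_\delta\,\eps^{\alpha-1-\delta}e^{Ct}$; taking $t=c_0\log\tfrac1\eps$ with $Cc_0<\alpha-1-\delta$ gives the Ehrenfest-time statement.

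For the estimate with $u_0\in\Sigma^3$: inserting $\varphi_{\rm app}^\eps$ (with $v$ from \eqref{eq:v}) into the linear operator, the computation behind \eqref{eq:rewrite initial} with $u=v$ gives $B_0=B_1=B_2=0$ (because $q,p,S,v$ solve \eqref{eq:hamilton}, \eqref{eq:action}, \eqref{eq:v}), so $i\eps\d_t\varphi_{\rm app}^\eps+\tfrac{\eps^2}{2}\Delta\varphi_{\rm app}^\eps-V\varphi_{\rm app}^\eps=-\eps^{-d/4}r^\eps(t,\tfrac{x-q(t)}{\sqrt\eps})\,v(t,\tfrac{x-q(t)}{\sqrt\eps})\,e^{i\phi_{\rm lin}/\eps}$, where $r^\eps$ is the order-two Taylor remainder of $V$ at $q(t)$. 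By Assumption~\ref{hyp:V}, $\d^\beta V\in L^\infty$ for $|\beta|=3$, hence $|r^\eps(t,y)|\le C\eps^{3/2}|y|^3$ \emph{globally}; as $x\mapsto(x-q(t))/\sqrt\eps$ is an $L^2$-isometry absorbing the $\eps^{-d/4}$, the source has $L^2$-norm $\le C\eps^{3/2}\|v(t)\|_{\Sigma^3}$. Since \eqref{eq:v} is a Schrödinger equation with a time-dependent quadratic potential with coefficients bounded by $\|\nabla^2V\|_{L^\infty}$, its flow preserves $\Sigma^3$ with $\|v(t)\|_{\Sigma^3}\lesssim e^{Ct}\|u_0\|_{\Sigma^3}$. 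A plain $L^2$ estimate for $e^\eps:=\varphi^\eps-\varphi_{\rm app}^\eps$, which vanishes at $t=0$ because $\varphi_{\rm app}^\eps$ coincides there with $\psi_0^\eps$, then gives $\eps\tfrac{d}{dt}\|e^\eps(t)\|_{L^2}\le C\eps^{3/2}e^{Ct}\|u_0\|_{\Sigma^3}$, hence $\|e^\eps(t)\|_{L^2}\lesssim\sqrt\eps\,e^{Ct}$.

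For the estimate with $u_0\in\Sigma^2$ I would not estimate the source directly — where $\sqrt\eps|y|\gtrsim1$ the remainder is only $\O(\eps|y|^2)$ and $|y|^3v$ need not lie in $L^2$ — but interpolate. Both $u_0\mapsto\varphi^\eps$ and $u_0\mapsto\varphi_{\rm app}^\eps$ are linear and $\|\varphi^\eps(t)\|_{L^2}=\|\varphi_{\rm app}^\eps(t)\|_{L^2}=\|u_0\|_{L^2}$, so $\|\varphi^\eps(t)-\varphi_{\rm app}^\eps(t)\|_{L^2}\le2\|u_0\|_{L^2}$ for any $L^2$ datum, while it is $\lesssim\sqrt\eps\,e^{Ct}\|u_0\|_{\Sigma^3}$ for $\Sigma^3$ data by the previous step. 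Splitting $u_0=g+h$ with $g\in\Sigma^3$ and optimizing, $\|\varphi^\eps(t)-\varphi_{\rm app}^\eps(t)\|_{L^2}\lesssim e^{Ct}K(\sqrt\eps;u_0,L^2,\Sigma^3)$, and since $\Sigma^2$ embeds in the real interpolation space between $L^2$ and $\Sigma^3$ at an exponent $\ge\tfrac12$, the $K$-functional is $\lesssim(\sqrt\eps)^{1/2}\|u_0\|_{\Sigma^2}=\eps^{1/4}\|u_0\|_{\Sigma^2}$, which is the claim. I expect the main obstacle to be the first estimate, and within it the uniform control of $\|\varphi^\eps(t)\log|\varphi^\eps(t)|^2\|_{L^2}$ — one must isolate the $\log\tfrac1\eps$ loss produced by the $\eps^{-d/4}$ concentration and check it is absorbed by $\eps^{-\delta}$ — together with the (standard) global well-posedness of \eqref{eq:logNLS} in $\Sigma$ that makes the pairings above and the identity $\IM\int|\psi^\eps|^2\log|\psi^\eps|^2=0$ legitimate.
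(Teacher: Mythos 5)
Your treatment of the first and third estimates matches the paper's: the same $L^2$ energy estimate for $w^\eps=\psi^\eps-\varphi^\eps$ combined with the Cazenave--Haraux inequality (Lemma~\ref{lem:CH}) and an $L^{2\pm\eta}$ control of $\varphi^\eps\log|\varphi^\eps|^2$ through exponentially-in-time bounded $\Sigma$-norms of the rescaled profile $v^\eps$ (the paper phrases the resulting loss as $\eps^{-d\eta/8}$ rather than your explicit $\log\frac1\eps$, but either is absorbed into $\eps^{-\delta}$), and then the same direct estimate of the order-three Taylor remainder, giving a source of size $\O(\eps^{3/2}\||y|^3v\|_{L^2})$, for the $\Sigma^3$ case. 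The genuine divergence is the intermediate $\Sigma^2$ estimate. The paper does not interpolate: it redoes the energy estimate for $\delta_v^\eps=v^\eps-v$ and splits the weight in the source term by Cauchy--Schwarz as $\sqrt\eps\,\||y|\delta_v^\eps\|_{L^2}\,\||y|^2v\|_{L^2}$, using $\||y|\delta_v^\eps\|_{L^2}\lesssim e^{Ct}$ from the a priori $\Sigma$ bounds on $v^\eps$ and $v$; this yields $\frac{d}{dt}\|\delta_v^\eps\|_{L^2}^2\lesssim\sqrt\eps\, e^{Ct}$ and hence the $\eps^{1/4}$ rate. Your route --- interpolating the two linear, $L^2$-norm-preserving solution maps between the trivial $L^2$ bound and the $\Sigma^3$ bound --- is also valid, provided you justify the embedding of $\Sigma^2$ into the real interpolation space $(L^2,\Sigma^3)_{2/3,\infty}$, which follows from identifying $\Sigma^k$ with $D\big((-\Delta+|x|^2)^{k/2}\big)$ and interpolating domains of powers of a positive self-adjoint operator; note the exponent is $2/3$, not merely $\ge 1/2$, so your argument actually yields the stronger rate $\eps^{1/3}e^{Ct}$ (the regime $\sqrt\eps\,e^{Ct}\ge 1$ being covered by the trivial $L^2$ bound). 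What the paper's energy argument buys in exchange is robustness: it makes no use of linearity of the solution map, and it is precisely the step that is recycled in the critical nonlinear case $\alpha=1$ (Section~\ref{sec:general}), where your interpolation argument would not apply.
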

In the linear case $\lambda=0$, Ehrenfest time,
  of the form 
  $c\log\frac{1}{\eps}$, may actually correspond to the maximum time
  of validity of the quadratic approximation  $\varphi^\eps$ by
  $\varphi_{\rm app}^\eps$
  (for some best constant $c>0$),
  see e.g. \cite{robert2021coherent}. The last estimate of
  Proposition~\ref{prop:subcritical} is classical, 
  and is essentially sharp (up to the fact that the constant $C$ is
  not explicit here);  see
  e.g. \cite{Carles2021book,robert2021coherent}. Also, the order of
  magnitude $\eps^{\alpha-1-\delta}$ in the error $\|\psi^\eps(t)- 
  \varphi^\eps(t)\|_{L^2(\R^d)}$ is expected to be essentially (due to
  the restriction $\delta>0$) sharp. The
  intermediary error estimate for $\varphi^\eps-\varphi_{\rm app}^\eps$
  when  $u_0\in \Sigma^2$ appears to be
  new; its main interest will show in the critical case $\alpha=1$,
  where the argument of proof is adapted, 
  and it is not clear whether even at the level of
  Proposition~\ref{prop:subcritical} this estimate is sharp.
We now come to critical phenomena, and set $\alpha=1$. The case of
Gaussian initial profiles turns out to be more convenient, even though
we consider a nonlinear setting; see also Remark~\ref{rem:sharpness}.
\begin{theorem}[Critical case]\label{theo:single}
  Suppose that $\alpha=1$. \\
  \noindent $(1)$ If $u_0\in \Sigma^2$, then with $\psi_{\rm app}^\eps$
defined in \eqref{eq:psiapp} ($u$ is given by \eqref{eq:u}), there exists
 $C>0$ independent of $\eps$ such that,
  \begin{equation*}
    \|\psi^\eps(t)-\psi_{\rm app}^\eps(t)\|_{L^2(\R^d)}\lesssim
  \eps^{1/4}e^{e^{Ct}},\quad t\ge 0.
\end{equation*}
In particular, there exists $c$ independent of $\eps $ such that
\begin{equation*}
  \sup_{0\le t\le c\log\log\frac{1}{\eps}}\|\psi^\eps(t)-
  \psi_{\rm app}^\eps(t)\|_{L^2(\R^d)}\Tend \eps 0 0 .  
\end{equation*}
  \noindent $(2)$ Suppose that $V$ decouples variables,
  \begin{equation*}
    V(x) = \sum_{j=1}^d V_j(x_j).
  \end{equation*}
 If  $u_0$ is a Gaussian of the form
  \begin{equation*}
    u_0(y) = b_0 \exp \(-\frac{1}{2}\sum_{j=1}^da_{0j}y_j^2\), \quad
    a_{0j},b_0\in \C,\ \RE a_{0j}>0,
  \end{equation*}
 then  there exists
 $C>0$ independent of $\eps$ such that,
  \begin{equation*}
    \|\psi^\eps(t)-\psi_{\rm app}^\eps(t)\|_{L^2(\R^d)}\lesssim
  \sqrt \eps e^{Ct},\quad t\ge 0.
\end{equation*}
\end{theorem}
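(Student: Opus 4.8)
The plan is to estimate $w^\eps := \psi^\eps - \psi_{\rm app}^\eps$ in $L^2$ by a Gronwall argument, treating the defect of $\psi_{\rm app}^\eps$ as a source term, exactly as in Proposition~\ref{prop:subcritical} but now keeping the logarithmic term inside the envelope equation \eqref{eq:u}. By construction — the Hamiltonian flow \eqref{eq:hamilton} and the action \eqref{eq:action} cancel $B_0$ and $B_1$ in \eqref{eq:rewrite initial}, the modified phase $\phi_{\rm nl}$ absorbs the $\eps\log\eps$ contribution of $B_{\rm nl}$, and \eqref{eq:u} cancels $B_2$ together with the remaining part of $B_{\rm nl}$ — the only defect left is the cubic Taylor remainder of $V$: setting $y=(x-q(t))/\sqrt\eps$, $w^\eps$ vanishes at $t=0$ and solves
\begin{equation*}
  i\eps\d_t w^\eps+\frac{\eps^2}2\Delta w^\eps-V w^\eps
  =\lambda\eps\(\psi^\eps\log|\psi^\eps|^2-\psi_{\rm app}^\eps\log|\psi_{\rm app}^\eps|^2\)
  +\eps^{-d/4}\,\rho^\eps(t,y)\,u(t,y)\,e^{i\phi_{\rm nl}/\eps},
\end{equation*}
where $\rho^\eps(t,y)=V\(q(t)+\sqrt\eps\,y\)-V(q(t))-\sqrt\eps\,y\cdot\nabla V(q(t))-\tfrac\eps2\<y,\nabla^2V(q(t))y\>$ satisfies, by Assumption~\ref{hyp:V}, the two pointwise bounds $|\rho^\eps(t,y)|\le C\min\(\eps|y|^2,\eps^{3/2}|y|^3\)$. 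Taking the $L^2$ inner product with $w^\eps$ and imaginary parts, the self-adjoint part drops out, the logarithmic difference is controlled by $C\|w^\eps\|_{L^2}^2$ via the elementary inequality $\bigl|\IM\bigl(\overline{(z_1-z_2)}\,(z_1\log|z_1|^2-z_2\log|z_2|^2)\bigr)\bigr|\le C|z_1-z_2|^2$ (no logarithmic loss), and Cauchy--Schwarz together with the change of variables $x\mapsto y$ handle the source, so that
\begin{equation*}
  \frac{\dd}{\dd t}\|w^\eps(t)\|_{L^2}\le C\|w^\eps(t)\|_{L^2}+\frac1\eps\bigl\|\rho^\eps(t,\cdot)\,u(t,\cdot)\bigr\|_{L^2(\R^d)} .
\end{equation*}
Since $w^\eps$ vanishes at $t=0$, everything reduces to bounding $\eps^{-1}\int_0^t\|\rho^\eps(s,\cdot)u(s,\cdot)\|_{L^2}\,\dd s$.

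For part $(2)$, if $V=\sum_jV_j(x_j)$ then the Hamiltonian flow decouples as well and the potential in \eqref{eq:u} becomes $\tfrac12\sum_j\omega_j(t)y_j^2$ with $\omega_j(t)=V_j''(q_j(t))$ bounded by $\|\d^2V\|_{L^\infty}$, so by the tensorization property of the logarithmic nonlinearity \eqref{eq:u} splits into $d$ one-dimensional equations. A Gaussian profile is preserved: inserting $u(t,y)=\prod_jb_j(t)\exp\(-\tfrac12a_j(t)y_j^2\)$ and matching powers of $y_j$ yields a closed system, with $a_j$ solving a Riccati equation $\dot a_j=-i\(a_j^2+2\lambda\RE a_j-\omega_j\)$; writing $a_j=\sigma_j^{-2}$ gives the width equation $\ddot\sigma_j+\omega_j(t)\sigma_j=\sigma_j^{-3}+2\lambda/\sigma_j$, whose repulsive term $\sigma_j^{-3}$ keeps $\sigma_j$ bounded away from $0$, so that $\sigma_j$, $\dot\sigma_j$ and $|b_j|$ grow at most exponentially and $\RE a_j=\sigma_j^{-2}\gtrsim e^{-Ct}$. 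Then $u(t,\cdot)$ is a Gaussian with all moments and $\||y|^3u(t)\|_{L^2}\le Ce^{Ct}$; using $|\rho^\eps|\le C\eps^{3/2}|y|^3$ we get $\|\rho^\eps(s,\cdot)u(s,\cdot)\|_{L^2}\le C\eps^{3/2}e^{Cs}$, hence $\|w^\eps(t)\|_{L^2}\le\eps^{-1}e^{Ct}\cdot C\eps^{3/2}e^{Ct}=C\sqrt\eps\,e^{Ct}$.

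For part $(1)$, with only $u_0\in\Sigma^2$ the third moment of $u$ is not available, and the bound $|\rho^\eps|\le C\eps|y|^2$ alone gives merely $\|w^\eps(t)\|_{L^2}\lesssim e^{Ct}$. One must adapt the interpolation behind the $\Sigma^2$ estimate of Proposition~\ref{prop:subcritical}, where the \emph{linear} map $u_0\mapsto\varphi^\eps-\varphi_{\rm app}^\eps$ is interpolated between the trivial endpoint ($\O(1)$ for $u_0\in\Sigma$) and the $\Sigma^3$ endpoint ($\O(\sqrt\eps)$), using $\Sigma^2=[\Sigma^1,\Sigma^3]_{1/2}$. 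In the present nonlinear framework the interpolation has to be carried out at the level of the profile: one regularizes $u_0$ in $\Sigma^2$ by well-localized data, applies the $\sqrt\eps$-type estimate to the corresponding approximate solution, and controls the discrepancy through the continuous dependence of the flow of \eqref{eq:u}. Because the logarithmic nonlinearity makes this dependence only \emph{logarithmically} Lipschitz — the stability inequality being of the form $\tfrac{\dd}{\dd t}N\le CN\log(1/N)$, which integrates to $N(t)\lesssim N(0)^{\exp(-Ct)}$ — optimizing over the regularization parameter produces at once the weaker rate $\eps^{1/4}$ and the double exponential $e^{e^{Ct}}$; the time $c\log\log\frac1\eps$ then follows.

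The main obstacle is exactly this last point: exploiting the $\eps^{3/2}$ carried by the Taylor remainder of $V$ demands a third moment of the envelope $u$, whereas \eqref{eq:u} — a logarithmic Schr\"odinger equation with a time-dependent quadratic potential — neither propagates regularity past $\Sigma^2$ for general data nor has a Lipschitz flow, which is what forces the regularization/interpolation scheme and the ensuing loss. In the Gaussian situation of part $(2)$ this difficulty evaporates, since $u(t,\cdot)$ is then explicit and carries all moments, and the sharp rate $\sqrt\eps$ is recovered.
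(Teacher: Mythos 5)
Your treatment of part (2) is correct and follows essentially the same route as the paper: Gaussian ansatz, Riccati equation for the widths, a substitution reducing it to $\ddot\tau=\alpha_0^2/\tau^3+2\lambda\alpha_0/\tau-V''(q(t))\tau$ with exponential two-sided bounds on $\tau$ and $\dot\tau$, hence exponential bounds on all moments of $u$, and then the energy estimate with the full cubic Taylor remainder placed on $\psi_{\rm app}^\eps$ yields $\sqrt\eps\, e^{Ct}$. (Minor point: $a_j=\sigma_j^{-2}$ is not the right substitution, since $a_j$ must acquire a nonzero imaginary part for $t>0$; the correct one is $a_j=\alpha_{0j}/\tau_j^2-i\dot\tau_j/\tau_j$. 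Also, the claim that the repulsive term alone keeps $\sigma_j$ away from $0$ needs the energy identity argument of Lemma~\ref{lem:bddoftau}. Neither affects the outcome.)

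Part (1) is where the proposal has a genuine gap. You correctly identify that putting the whole weight $|x-q(t)|^3$ on the approximate solution requires a third moment of $u$, unavailable for $u_0\in\Sigma^2$, and that the crude bound $|\rho^\eps|\lesssim\eps|y|^2$ gives no gain in $\eps$. But the regularization/interpolation scheme you then sketch is neither carried out nor workable as described: the $L^2$ flow of the logarithmic nonlinearity is genuinely Lipschitz, since Lemma~\ref{lem:CH} gives $\frac{d}{dt}\|u_1-u_2\|_{L^2}^2\le 4|\lambda|\,\|u_1-u_2\|_{L^2}^2$ with no logarithmic loss (you use exactly this fact two paragraphs earlier), so the claimed stability inequality $\frac{d}{dt}N\le CN\log(1/N)$ and the rate $N(0)^{\exp(-Ct)}$ do not arise here, and the double exponential cannot be produced this way. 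The paper's actual argument is more direct: in the energy estimate one keeps the integral $\int|V-T^2_{q(t)}|\,|w^\eps|\,|\psi_{\rm app}^\eps|$ and splits the cubic weight as $|x-q(t)|\cdot|x-q(t)|^2$, putting one power on $w^\eps$ and two on $\psi_{\rm app}^\eps$. The factor $\||x-q(t)|w^\eps\|_{L^2}$ is bounded by $\sqrt\eps\, e^{Ct}$ via the triangle inequality, using exponential bounds on the first moments of both the exact envelope $u^\eps$ and of $u$ (Lemma~\ref{lem:u}); the factor $\||x-q(t)|^2\psi_{\rm app}^\eps\|_{L^2}$ is bounded by $\eps\, e^{e^{Ct}}$ via Proposition~\ref{prop:Sigma2}, which propagates the $\Sigma^2$ regularity of $u$ with a double exponential caused by the term $\dot q(t)|y|^2u$ in the equation for $\d_t u$. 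This yields $\eps\frac{d}{dt}\|w^\eps\|_{L^2}^2\lesssim\eps\|w^\eps\|_{L^2}^2+\eps^{3/2}e^{e^{Ct}}$ and hence $\eps^{1/4}e^{e^{Ct}}$ by Gr\"onwall: both the exponent $1/4$ and the double exponential come from this splitting and from the $\Sigma^2$ a priori estimate, not from an interpolation between $\Sigma^1$ and $\Sigma^3$. Your framework, which applies Cauchy--Schwarz so as to put the entire source on the envelope before dividing by $\|w^\eps\|_{L^2}$, cannot capture this because it never weights $w^\eps$ itself.
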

We remark that in the second case, Assumption~\ref{hyp:V} implies that
each $V_j$ satisfies (the one-dimensional version of)
Assumption~\ref{hyp:V}. If $V$ does not decouple variables, the
computations presented in Section~\ref{sec:gaussian} become more
involved, as one has to deal with time dependent matrices instead of
scalars, since even if $u_0$ decouples variables like above, this
property is lost for $t>0$ in general. We choose to present this simplified
setting to keep notations as light as possible. 

\begin{theorem}[Nonlinear superposition]\label{theo:superp}
Suppose that $\alpha=1$,  that $V$ decouples variables,
  \begin{equation*}
    V(x) = \sum_{j=1}^d V_j(x_j),
  \end{equation*}
and that the initial datum in \eqref{eq:logNLS}  is the sum of two
coherent states,
\begin{equation}\label{eq:CI2}
\psi^\eps_0(x)=\eps^{-d/4} \sum_{j=1}^2u_{0j}\left(\frac{x-q_{0j}}{\sqrt\eps}\right)
e^{i{(x-q_{0j})\cdot p_0/\eps}},\quad
q_{0j},p_{0j}\in \R^d,
\end{equation}
with different centers in phase space, $(q_{01},p_{01})\not =
(q_{02},p_{02})$. If $u_{01}$ and $u_{02}$ are two Gaussian functions
like in the second part of Theorem~\ref{theo:single}, then uniformly
on bounded time intervals, a nonlinear superposition holds: if
$\psi_{j,\rm app}^\eps$ denotes the approximate solution associated
with $u_{0j}$, then for any $T>0$ and any $\gamma<1/2$,
\begin{equation*}
 \sup_{t\in [0,T]} \left\| \psi^\eps(t) - \psi_{1,\rm app}^\eps(t)
-\psi_{2,\rm app}^\eps(t)\right\|_{L^2}\lesssim \eps^{\gamma}. 
\end{equation*}
If in addition $d=1$, and $E_1\not=E_2$, where 
\begin{equation*}
  E_j = \frac{p_{0j}^2}{2}+V\(q_{0j}\),
\end{equation*}
then for all $\gamma<1/2$, there exists $C>0$ independent of $\eps\in
(0,1)$ such that 
\begin{equation*}
\left\| \psi^\eps(t) - \psi_{1,\rm app}^\eps(t)
-\psi_{2,\rm app}^\eps(t)\right\|_{L^2}\lesssim
\eps^{\gamma}e^{Ct},\quad \forall t\ge 0. 
\end{equation*}
In particular, there exists $c>0$ independent of $\eps$ such that
\begin{equation*}
 \sup_{0\le t\le c\log\frac{1}{\eps}} \left\| \psi^\eps(t) - \psi_{1,\rm app}^\eps(t)
-\psi_{2,\rm app}^\eps(t)\right\|_{L^2}\Tend \eps 0 0 . 
\end{equation*}
\end{theorem}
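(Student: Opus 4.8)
The plan is to show that $\psi^\eps$ is close to $\Psi^\eps:=\psi_{1,\rm app}^\eps+\psi_{2,\rm app}^\eps$ by running the same $L^2$ stability argument as for Theorem~\ref{theo:single}, the one new ingredient being the control of the nonlinear interaction of the two wave packets, measured in a time-integrated sense. Since $V$ decouples and the $u_{0j}$ are Gaussian, the second part of Theorem~\ref{theo:single} (and its proof) gives, for $j=1,2$, an explicit Gaussian wave packet $\psi_{j,\rm app}^\eps$ that solves \eqref{eq:logNLS} up to a source $\rho_j^\eps$ with $\eps^{-1}\|\rho_j^\eps(s)\|_{L^2}\lesssim\eps^{1/2}e^{Cs}$ (this is exactly what yields the $\sqrt\eps$ error there). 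Using $\Psi^\eps\log|\Psi^\eps|^2=\psi_{1,\rm app}^\eps\log|\Psi^\eps|^2+\psi_{2,\rm app}^\eps\log|\Psi^\eps|^2$ and setting
\[
 \mathcal I^\eps:=\psi_{1,\rm app}^\eps\log\frac{|\Psi^\eps|^2}{|\psi_{1,\rm app}^\eps|^2}+\psi_{2,\rm app}^\eps\log\frac{|\Psi^\eps|^2}{|\psi_{2,\rm app}^\eps|^2}
\]
(the prefactors $\eps^{-d/2}$ cancel inside each logarithm, and each $\psi_{j,\rm app}^\eps$ is everywhere nonzero), one finds that $\Psi^\eps$ solves \eqref{eq:logNLS} up to the source $\rho_1^\eps+\rho_2^\eps-\lambda\eps\,\mathcal I^\eps$. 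Writing $w^\eps:=\psi^\eps-\Psi^\eps$, subtracting the equations, pairing with $w^\eps$ in $L^2$, taking the imaginary part, and using the pointwise inequality for $z\mapsto z\log|z|^2$ from the proof of Theorem~\ref{theo:single} to absorb $\IM\langle w^\eps,\psi^\eps\log|\psi^\eps|^2-\Psi^\eps\log|\Psi^\eps|^2\rangle\lesssim\|w^\eps\|_{L^2}^2$, one gets $w^\eps(0)=0$ and
\[
 \tfrac{d}{dt}\|w^\eps(t)\|_{L^2}\lesssim\|w^\eps(t)\|_{L^2}+\|\mathcal I^\eps(t)\|_{L^2}+\eps^{1/2}e^{Ct},
\]
so by Gronwall the theorem reduces to $\int_0^t e^{C(t-s)}\|\mathcal I^\eps(s)\|_{L^2}\,ds\lesssim\eps^\gamma$ on $[0,T]$, resp. $\lesssim\eps^\gamma e^{Ct}$ for all $t\ge0$ when $d=1$ and $E_1\ne E_2$.

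The heart of the argument is a dichotomy in time. Let $(q_j(t),p_j(t))$ be the Hamiltonian trajectory \eqref{eq:hamilton} issued from $(q_{0j},p_{0j})$; since \eqref{eq:hamilton} generates a flow, the two trajectories never coincide, so whenever $q_1(t)=q_2(t)$ one has $p_1(t)\ne p_2(t)$, and these crossing times are transversal, hence isolated. At times when $|q_1(t)-q_2(t)|$ is not too small, the Gaussian bumps $|\psi_{j,\rm app}^\eps(t,\cdot)|$ (of width $\sim\sqrt\eps$, centered at $q_j(t)$) are essentially disjoint, and Gaussian tail estimates give $\|\mathcal I^\eps(t)\|_{L^2}\lesssim\eps^{-N}\exp(-c|q_1(t)-q_2(t)|^2/\eps)$; in particular $\|\mathcal I^\eps(t)\|_{L^2}=\mathcal O(\eps^\infty)$ once $|q_1(t)-q_2(t)|\gtrsim\sqrt\eps\,|\log\eps|$. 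Near a crossing time $t_*$ the bumps overlap in position, but there the quotient $\psi_{2,\rm app}^\eps/\psi_{1,\rm app}^\eps$ carries the fast phase $(p_2(t)-p_1(t))\cdot x/\eps$ with $|p_2(t)-p_1(t)|$ bounded below; this forces the destructive-interference zeros of $\Psi^\eps$ to be isolated and transversal, with $|\nabla\Psi^\eps|$ of order $\eps^{-1}$ times the envelope near them, so that $\log|\Psi^\eps|$ stays square-integrable with only an $\mathcal O(|\log\eps|)$ loss; combined with the Gaussian decay of the envelopes, a careful but elementary analysis of the overlap region yields the crude bound $\|\mathcal I^\eps(t)\|_{L^2}\lesssim 1$ for $t$ near $t_*$. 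Since $|q_1(t)-q_2(t)|\approx|p_1(t_*)-p_2(t_*)|\,|t-t_*|$ there, the ``bad'' window $\{|q_1(t)-q_2(t)|\lesssim\sqrt\eps\,|\log\eps|\}$ has length $\mathcal O(\sqrt\eps\,|\log\eps|)$, whence $\int_{\text{near }t_*}\|\mathcal I^\eps(t)\|_{L^2}\,dt\lesssim\sqrt\eps\,|\log\eps|\lesssim\eps^\gamma$ for any $\gamma<1/2$.

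To conclude: for fixed $T$ and general $d$, there are only finitely many crossing times in $[0,T]$, so $\int_0^T\|\mathcal I^\eps(s)\|_{L^2}\,ds\lesssim\eps^\gamma$ and the Gronwall estimate (with constants depending on $T$) gives the first assertion. When $d=1$ and $E_1\ne E_2$, the level curves $\{p^2/2+V(q)=E_1\}$ and $\{p^2/2+V(q)=E_2\}$ are disjoint, so the trajectories never meet in phase space; since $V$ is at most quadratic, the set of crossing times $\{t_k\}$ is either finite (when $V$ is non-confining) or essentially linearly spaced (when $V$ is confining), so $\sum_k e^{-Ct_k}<\infty$. Writing $\|\mathcal I^\eps(s)\|_{L^2}\le\sum_k f_k(s)$ with $f_k$ supported in the bad window around $t_k$ and $\int f_k\lesssim\sqrt\eps\,|\log\eps|$, we get $\int_0^t e^{C(t-s)}\|\mathcal I^\eps(s)\|_{L^2}\,ds\lesssim\sqrt\eps\,|\log\eps|\,e^{Ct}\sum_k e^{-Ct_k}\lesssim\eps^\gamma e^{Ct}$, hence $\|w^\eps(t)\|_{L^2}\lesssim\eps^\gamma e^{Ct}$ for all $t\ge0$; restricting to $t\le c\log\frac1\eps$ gives the last statement.

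The main obstacle is the behavior of $\mathcal I^\eps$ near the crossing times. Away from them $\Psi^\eps$ leaves an $\mathcal O(\eps^\infty)$ residual and the analysis is routine, but near a crossing the two packets genuinely overlap in position, $\Psi^\eps$ is no longer an approximate solution with small $L^2$ residual pointwise in time, and the singularity of the logarithm at $0$ is felt at the interference zeros of $\Psi^\eps$. Resolving this requires both using the momentum gap $|p_1-p_2|>0$ to make those zeros transversal (keeping $\log|\Psi^\eps|$ integrable with an at most logarithmic loss) and exploiting that the overlap is confined to a time window of length $\mathcal O(\sqrt\eps\,|\log\eps|)$, so that the time-integrated interaction is still $\mathcal O(\eps^\gamma)$. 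This is what forces the loss $\gamma<1/2$ rather than $\gamma=1/2$, and, since controlling the number and geometry of the crossing times over long times is delicate in higher dimensions, why the long-time estimate is stated only for $d=1$.
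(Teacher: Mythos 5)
Your overall architecture matches the paper's: write the error equation for $w^\eps=\psi^\eps-\psi_{1,\rm app}^\eps-\psi_{2,\rm app}^\eps$, absorb the semilinear difference via the Cazenave--Haraux inequality (Lemma~\ref{lem:CH}), treat the Taylor remainders of $V$ as in the single-state case, and reduce everything to a time-integrated bound on the interaction term, split according to whether $|q_1(t)-q_2(t)|$ is small or not. The identity $\mathcal N_I^\eps=\lambda\eps\bigl(\psi_{1,\rm app}^\eps\log\frac{|\Psi^\eps|^2}{|\psi_{1,\rm app}^\eps|^2}+\psi_{2,\rm app}^\eps\log\frac{|\Psi^\eps|^2}{|\psi_{2,\rm app}^\eps|^2}\bigr)$ is correct. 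However, both quantitative ingredients you need are asserted rather than proved, and one of them is asserted incorrectly.

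First, in the separated regime your claim that ``Gaussian tail estimates give $\|\mathcal I^\eps(t)\|_{L^2}\lesssim\eps^{-N}\exp(-c|q_1(t)-q_2(t)|^2/\eps)$'' is precisely the hard step, and it is not a routine tail estimate: even when the centers are far apart, $\Psi^\eps$ vanishes on interference hypersurfaces in the region where the two Gaussians have comparable (exponentially small) size, so $\log|\Psi^\eps|^2$ is singular there, and one must also control the region where $\log(|\psi_{2,\rm app}^\eps|^2/|\psi_{1,\rm app}^\eps|^2)$ grows quadratically in $x/\sqrt\eps$ against the tail of $\psi_{1,\rm app}^\eps$. The paper does not attempt this by hand; it invokes a dedicated quantitative lemma of Ferriere (Lemma~\ref{lem:separation ineq}, from \cite{Ferriere2020}) for sums of Gaussians, which is exactly the missing key lemma in your argument. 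You only acknowledge the log-singularity issue in the crossing regime, where, ironically, it is not needed: there the paper simply uses the pointwise bound $|z\log|z|^2|^2\lesssim|z|^{2-\eta}+|z|^{2+\eta}$ on each of the three terms of $\mathcal N_I^\eps$ separately, giving $\eps^{-1}\|\mathcal N_I^\eps(t)\|_{L^2}\lesssim\eps^{-d\eta/8}e^{Ct}$ with no transversality analysis of the zeros of $\Psi^\eps$ whatsoever; your proposed ``careful but elementary analysis of the overlap region'' is both unnecessary and not actually carried out.

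Second, for the long-time statement in $d=1$ your assertion that $|p_1(t)-p_2(t)|$ is bounded below at crossing times is false in general: at a time where $q_1=q_2$, energy conservation gives $|p_1-p_2|\,|p_1+p_2|=2|E_1-E_2|$, and since $|p_j(t)|$ may grow like $e^{C_0t}$ (Lemma~\ref{lem:hamilton}), the momentum gap can decay exponentially, so the bad windows are \emph{not} uniformly of length $\mathcal O(\sqrt\eps\,|\log\eps|)$. This is why the paper relies on Proposition~6.3 of \cite{CaFe11}, which yields $|I^\eps(t)|\lesssim\eps^\gamma e^{C_0t}|E_1-E_2|^{-2}$ up to Ehrenfest time; the exponential loss is then absorbed into the final $e^{Ct}$. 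Your claim that the crossing times are ``essentially linearly spaced'' with summable weights is likewise unsubstantiated. The skeleton of your proof is the right one, but the two estimates on which the theorem actually rests are exactly the ones left unproved.
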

The above result is readily generalized to the case of finitely many
initial Gaussian coherent states, with pairwise distinct centers in
phase space. We restrict the presentation to the case of two coherent
states, to make notations as light as possible.
\begin{remark}[On sharpness in Theorems~\ref{theo:single} and
  \ref{theo:superp}]\label{rem:sharpness}
  The first part of Theorem~\ref{theo:single} corresponds to the most
  general framework we could consider in the critical case $\alpha=1$,
  both in terms of initial data (simply $u_0\in \Sigma^2$) and in
  terms of the external potential $V$ (satisfying
  Assumption~\ref{hyp:V}). We do not expect this error estimate to be
  sharp: the double exponential growth in time is probably not sharp,
  and, as pointed out above for the linear case, it is not clear
  whether the $\frac{1}{4}$ 
  power for $\varepsilon$ is sharp. On the other hand, in the special
  case where $u_0$ is Gaussian, and $V$ decouples variables (which is
  automatic if $d=1$), then the second error estimate in
  Theorem~\ref{theo:single} is expected to be sharp. Likewise, up to
  the restriction $\gamma<1/2$ (the case $\gamma=1/2$ might be
  admissible, this question remains open), the conclusion of
  Theorem~\ref{theo:superp} is expected to be sharp. It is likely
  that a similar superposition principle holds for more general
  initial envelopes $u_{0j}$ (not only Gaussians), and potentials $V$ mixing variables, but
  we could not find a method to address such a framework.
 \end{remark}
\subsection{Content}
\label{sec:content}

In Section~\ref{sec:prelim}, we recall several general estimates,
address the Cauchy problems for \eqref{eq:logNLS} and \eqref{eq:u},
proving new estimates in the case of \eqref{eq:u}, in
Section~\ref{sec:further}. Proposition~\ref{prop:subcritical} is proven in
Section~\ref{sec:subcritical}, Theorem~\ref{theo:single} in
Section~\ref{sec:critical}, and Theorem~\ref{theo:superp} in
Section~\ref{sec:superp}. 
\subsection{Notations}
\label{sec:notations}

Throughout the paper,
$C$ denotes a constant independent of $t$ and $\eps\in (0,1]$,
which may change from 
one line to the other. 

For $f,g\ge0$, we write $f\lesssim g$ whenever there exists a
constant $C$ independent of $t\ge 0$, $x\in \R^d$, and $\eps\in (0,1)$,
such that $f\le Cg$.

\section{Preliminary results}
\label{sec:prelim}

\subsection{Some technical tools}
We recall that under Assumption~\ref{hyp:V}, Cauchy-Lipschitz Theorem
implies: 
\begin{lemma}\label{lem:hamilton}
  Let $(q_0,p_0)\in \R^d\times \R^d$. Under Assumption~\ref{hyp:V},
\eqref{eq:hamilton} has a unique global, smooth solution 
$(q,p)\in 
C^\infty(\R;\R^d)^2$. It grows at
most exponentially:    
 \begin{equation*}
 \exists C_0>0,\quad \left|q(t)\right|+\left|p(t)\right|\lesssim
 e^{C_0t},\quad \forall t\ge 0.
 \end{equation*}
\end{lemma}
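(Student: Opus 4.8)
The statement packages three things: local existence/uniqueness, $C^\infty$ regularity, and globality together with the exponential bound. The plan is to treat the first two by the standard ODE machinery and to devote the real work to the global a priori estimate, which is exactly where Assumption~\ref{hyp:V} is used.

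First I would rewrite \eqref{eq:hamilton} as the first-order system $\dot z = F(z)$ on $\R^d\times\R^d$ with $z=(q,p)$ and $F(q,p)=(p,-\nabla V(q))$. By Assumption~\ref{hyp:V}, $V\in C^\infty$, so $F\in C^\infty$, in particular locally Lipschitz; Cauchy--Lipschitz (Picard--Lindel\"of) then gives a unique maximal solution $(q,p)\in C^1((-T_-,T_+);\R^d)^2$ with $(q,p)(0)=(q_0,p_0)$, and bootstrapping the equation (each derivative of $(q,p)$ is a smooth function of $(q,p)$) upgrades this to $C^\infty$ in $t$. Since we only consider $t\ge 0$, it remains to show $T_+=+\infty$ and to control the growth of $(q,p)$ on $[0,T_+)$.

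The key point is that $\d^\beta V\in L^\infty$ for all $|\beta|\ge 2$ forces $\nabla V$ to be globally Lipschitz, hence at most linearly growing: from $\nabla V(q)=\nabla V(0)+\int_0^1 \nabla^2 V(sq)\,q\,\dd s$ one gets $|\nabla V(q)|\le |\nabla V(0)|+\|\nabla^2 V\|_{L^\infty}|q|$. Setting $N(t)=1+|q(t)|^2+|p(t)|^2$ on the maximal interval and using $\dot q=p$, $\dot p=-\nabla V(q)$, I would estimate
\[
\dot N = 2q\cdot p - 2 p\cdot\nabla V(q)\le 2|q|\,|p|+2|p|\bigl(|\nabla V(0)|+\|\nabla^2 V\|_{L^\infty}|q|\bigr)\le C N,
\]
with $C$ depending only on $V$ (and not on $t$ or the data). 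Gronwall's lemma gives $N(t)\le N(0)e^{Ct}$ on $[0,T_+)$, hence $|q(t)|+|p(t)|\le\sqrt{2N(t)}\lesssim e^{C_0 t}$ with $C_0=C/2$ (the implicit constant depending on $(q_0,p_0)$). This bound confines the trajectory to a bounded set on every finite time interval, so the blow-up alternative of the maximal solution excludes $T_+<\infty$; therefore $T_+=+\infty$ and the exponential estimate holds for all $t\ge 0$.

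I do not expect a genuine obstacle: the lemma is a textbook consequence of Cauchy--Lipschitz plus a Gronwall argument. The only point worth isolating is that ``at most quadratic'' is precisely the threshold hypothesis that makes $\nabla V$ sub-linear and hence makes the a priori estimate (and thus global existence) go through; a strictly superquadratic potential would allow finite-time blow-up and the statement would fail.
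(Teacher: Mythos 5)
Your proof is correct and is essentially the argument the paper has in mind: the paper states this lemma without proof, simply noting that it follows from the Cauchy--Lipschitz theorem under Assumption~\ref{hyp:V}, and your write-up (local existence plus the Gr\"onwall estimate on $1+|q|^2+|p|^2$, using that $\partial^\beta V\in L^\infty$ for $|\beta|=2$ makes $\nabla V$ sublinear, and the blow-up alternative) is exactly the standard filling-in of that claim.
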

The following lemma, originating in \cite{Cazenave1980}, is crucial in
the study of Schr\"odinger equations with a logarithmic nonlinearity,
and will be used several times in the sequel:
\begin{lemma}[From Lemma~1.1.1 in \cite{Cazenave1980}]\label{lem:CH}
  There holds
  \begin{equation*}
    \left|\operatorname{Im}\left(\left(z_2 \log
 \left|z_2\right|^2-z_1 \log
 \left|z_1\right|^2\right)\left(\overline{z_2}-
 \overline{z_1}\right)\right)\right| 
    \le 2\left|z_2-z_1\right|^2, \quad \forall z_1, z_2 \in
    \mathbb{C}.
  \end{equation*}
\end{lemma}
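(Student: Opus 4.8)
The plan is to prove the inequality by an explicit computation in polar coordinates, after first isolating a cancellation that removes all ``diagonal'' contributions. Write $f(z)=z\log|z|^2$, with the convention $f(0)=0$, and set $r_j=|z_j|$, $\theta_j=\arg z_j$. If $z_1=0$ or $z_2=0$ the left-hand side vanishes, since $f(z)\overline z=|z|^2\log|z|^2\in\R$, so the estimate is trivial and I may assume $z_1,z_2\neq 0$. Expanding the product, I would use that the two diagonal terms $f(z_2)\overline{z_2}=|z_2|^2\log|z_2|^2$ and $f(z_1)\overline{z_1}=|z_1|^2\log|z_1|^2$ are real, hence contribute nothing to the imaginary part. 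Thus only the cross terms survive:
\[
\operatorname{Im}\big((f(z_2)-f(z_1))(\overline{z_2}-\overline{z_1})\big)
=-\operatorname{Im}\big(f(z_2)\overline{z_1}\big)-\operatorname{Im}\big(f(z_1)\overline{z_2}\big).
\]

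Next I would insert the polar expressions. With $w=z_2\overline{z_1}$ one has $\operatorname{Im} w=r_1r_2\sin(\theta_2-\theta_1)$ and $z_1\overline{z_2}=\overline{w}$, while $\log|z_2|^2=2\log r_2$ and $\log|z_1|^2=2\log r_1$. A direct substitution then gives the closed form
\[
\operatorname{Im}\big((f(z_2)-f(z_1))(\overline{z_2}-\overline{z_1})\big)
=-2\,r_1r_2\,\sin(\theta_2-\theta_1)\,\log\frac{r_2}{r_1}.
\]
Since $|z_2-z_1|^2=r_1^2+r_2^2-2r_1r_2\cos(\theta_2-\theta_1)$, the claim reduces to the purely real inequality
\[
r_1r_2\,|\sin\phi|\,\Big|\log\tfrac{r_2}{r_1}\Big|+2r_1r_2\cos\phi\le r_1^2+r_2^2,\qquad \phi:=\theta_2-\theta_1,
\]
where I have moved the $\cos\phi$ term to the left.

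Finally I would dispose of the angle $\phi$ and then of the ratio of radii. The left-hand side has the form $A|\sin\phi|+B\cos\phi$ with $A=r_1r_2|\log(r_2/r_1)|\ge0$ and $B=2r_1r_2\ge0$; by Cauchy--Schwarz (treating separately the harmless range where $\cos\phi\le 0$) it is bounded by $\sqrt{A^2+B^2}$. Hence it suffices to check $A^2+B^2\le(r_1^2+r_2^2)^2$, which, after dividing by $r_1^4$ and setting $s=r_2/r_1$ (which we may assume $\ge1$ by symmetry), becomes $s^2(\log s)^2\le(s^2-1)^2$, i.e. $s\log s\le s^2-1$. This last one-variable inequality follows at once from the elementary bound $\log s\le s-1$, since for $s\ge1$ one has $s\log s\le s(s-1)=s^2-s\le s^2-1$.

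The computation is elementary throughout; the only point requiring care is the interplay between two regimes that rules out a cruder estimate. For nearly equal radii ($s\to1$) the factor $\log(r_2/r_1)$ is small but $|\sin\phi|$ need not be, whereas for very different radii it is $|z_2-z_1|^2$ on the right that grows. The amplitude bound $\sqrt{A^2+B^2}$ combined with the sharp scalar inequality $s\log s\le s^2-1$, which is tight at $s=1$ (matching the equality case $z_1=z_2$), is exactly what balances these regimes and produces the constant $2$; this balancing is the one genuinely substantive step.
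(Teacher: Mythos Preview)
Your proof is correct. The paper does not actually supply a proof of this lemma: it is simply quoted from Cazenave's 1980 paper (hence the attribution ``From Lemma~1.1.1 in \cite{Cazenave1980}''), so there is no in-paper argument to compare against.

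For the record, your route---reduce to the cross terms, rewrite in polar form as $-2r_1r_2\sin\phi\,\log(r_2/r_1)$, then bound $A|\sin\phi|+B\cos\phi$ by $\sqrt{A^2+B^2}$ and finish with the scalar inequality $s\log s\le s^2-1$---is clean and self-contained. The Cazenave-style argument usually presented in the literature proceeds slightly differently (often by writing $\operatorname{Im}\big(z_2\overline{z_1}(\log|z_2|^2-\log|z_1|^2)\big)$ directly and invoking the mean-value bound $|\log r_2^2-\log r_1^2|\le |r_2^2-r_1^2|/\min(r_1^2,r_2^2)$, then estimating $|z_2\overline{z_1}|\cdot|r_2-r_1|/\min(r_1,r_2)$), but both approaches are elementary and yield the same constant~$2$. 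Your Cauchy--Schwarz step combined with $\log s\le s-1$ is arguably tidier, since it avoids any case analysis on which of $r_1,r_2$ is larger beyond the harmless symmetry reduction $s\ge 1$.
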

We recall the Gagliardo-Nirenberg inequality and a standard inequality
(see e.g. (2.3) and its proof in \cite{CaGa18}),
\begin{align}\label{ineq:GN}
    &\|v\|_{L^p}\lesssim \|v\|_{L^2}^{1-\theta(p)}\|\nabla
    v\|_{L^2}^{\theta(p)},\quad \text{for}\quad 2\le p<
      \frac{2d}{(d-2)_+},\\
& \left\|v\right\|_{L^{p}} \lesssim\left\| v\right\|_{L^2}^{1-\theta(p')}
  \left\| y v\right\|_{L^2}^{\theta(p')},\quad \text{for}\quad
  \max\(1,\frac{2d}{d+2}\)< p\le 2, \label{eq:GNdual}
\end{align}
where
\begin{equation*}
  \theta(p)=d\(\frac{1}{2}-\frac{1}{p}\),\quad
  \frac{1}{p}+\frac{1}{p'}=1. 
\end{equation*}
\subsection{Cauchy problem}
\label{sec:cauchy}

In the case without potential, $V=0$, the Cauchy problem for
\eqref{eq:logNLS} was addressed initially in \cite{Cazenave1980} in
the case $\lambda<0$, then resumed in \cite{GuLoNi10} in the case
$\lambda\in \R$, with a generalization in \cite{CaGa18}, using
compactness methods. The space in which the Cauchy problem is solved
was further enlarged in \cite{CHO24,HO25}, using more constructive
methods. In the case where $\lambda<0$ and $V$ is a harmonic potential, the Cauchy
problem for \eqref{eq:logNLS} was solved in $\Sigma$ in \cite{ACS20},
by adapting the approach from \cite{Cazenave1980}. In the framework
considered in the present paper ($V$ satisfying Assumption~\ref{hyp:V}
and $\lambda\in \R$), the Cauchy problem was solved in $\Sigma$ in
\cite{CaFe21}:
\begin{proposition}[Proposition~1.3 in \cite{CaFe21}]\label{prop:cauchy}
 Let $d\ge 1$, $\lambda,\alpha\in \R$, $\eps>0$, and $V$ satisfying
 Assumption~\ref{hyp:V}. For
  $\psi^\eps_0\in \Sigma$, \eqref{eq:logNLS} has a unique solution
  $\psi^\eps\in L^\infty_{\rm  loc}(\R;\Sigma)\cap C(\R;L^2(\R^d))$.
Moreover, the following quantities are independent of time:
\begin{align*}
  &\text{Mass: } M^\eps(t)=\|\psi^\eps(t)\|_{L^2(\R^d)}^2\equiv M^\eps(0),\\
    &\text{Energy: } E^\eps(t)=\frac{1}{2}\|\eps\nabla
     \psi^\eps(t)\|_{L^2(\R^d)}^2
     +\int_{\R^d}V(x)|\psi^\eps(t,x)|^2dx\\
   &\phantom{\text{Energy}: E^\eps(t)=}
     +\lambda \eps^\alpha \int_{\R^d} |\psi^\eps(t,x)|^2\log
     |\psi^\eps(t,x)|^2dx\equiv E^\eps(0). 
  \end{align*}
\end{proposition}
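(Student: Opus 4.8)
The plan is to prove existence by a regularization-and-compactness scheme in the spirit of \cite{Cazenave1980}, and uniqueness by means of the inequality of Lemma~\ref{lem:CH}. Since $\eps>0$ is fixed here (it matters only for the semiclassical limit studied later), the coefficients $\eps^2/2$ and $\lambda\eps^\alpha$ are fixed constants, and after an obvious relabelling the equation is a logarithmic Schr\"odinger equation with a fixed, at most quadratic, smooth potential $V$. The only genuine difficulty is that the map $z\mapsto z\log|z|^2$ fails to be Lipschitz -- indeed fails to be even locally Lipschitz -- near $z=0$, so neither a contraction argument on Duhamel's formula nor the usual energy method applies directly.

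First I would regularize the nonlinearity: for $\theta\in(0,1)$ pick a smooth family of globally Lipschitz functions $f_\theta$ with $f_\theta(z)\to z\log|z|^2$ as $\theta\to0$, removing both the singularity at the origin and the growth at infinity (for instance $f_\theta(z)=z\log\frac{\theta+|z|^2}{1+\theta|z|^2}$). Writing $H=-\frac12\Delta+V$, the propagator $U(t)=e^{-itH}$ maps $\Sigma$ to itself under Assumption~\ref{hyp:V} (metaplectic/Mehler estimates for at most quadratic potentials), so the regularized equation $i\d_t\psi_\theta+\frac12\Delta\psi_\theta=V\psi_\theta+\lambda f_\theta(\psi_\theta)$ is solved globally in $C(\R;\Sigma)$ by a fixed-point argument on the corresponding Duhamel formula.

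The core of the proof is a priori bounds on $\psi_\theta$ in $\Sigma$ that are uniform in $\theta$. Mass conservation $\|\psi_\theta(t)\|_{L^2}=\|\psi_0\|_{L^2}$ follows by pairing with $\overline{\psi_\theta}$ and taking imaginary parts, the nonlinear term dropping out since $f_\theta(\psi_\theta)$ is a real multiple of $\psi_\theta$. For the remaining $\Sigma$ norm I would run coupled differential inequalities for $\|\nabla\psi_\theta\|_{L^2}^2$ and for the moment $\|x\psi_\theta\|_{L^2}^2$, together with energy conservation. The delicate term is the entropy $\int|\psi|^2\log|\psi|^2$: its positive part (where $|\psi|>1$) is bounded by a slightly super-$L^2$ Lebesgue norm, hence by $\|\nabla\psi\|_{L^2}$ via \eqref{ineq:GN}, while its negative part (where $|\psi|<1$) is bounded by a slightly sub-$L^2$ norm, hence by the moment $\|x\psi\|_{L^2}$ via the dual inequality \eqref{eq:GNdual}. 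Because $V$ may be unbounded below, energy conservation does not decouple these quantities by itself; it feeds instead into a Gronwall loop that closes and yields $\sup_{|t|\le T}\|\psi_\theta(t)\|_{\Sigma}\le C(T)$ independently of $\theta$. These uniform bounds give, along a subsequence, weak-$*$ convergence in $L^\infty_{\rm loc}(\R;\Sigma)$ and, through the equation and an Aubin--Lions type compactness argument, strong convergence in $L^2_{\rm loc}$, hence a.e. Since $z\mapsto z\log|z|^2$ is continuous on $\C$ (with value $0$ at the origin) and the entropy is uniformly integrable by the same Gagliardo--Nirenberg bounds, one passes to the limit in the nonlinear term, obtaining a solution $\psi\in L^\infty_{\rm loc}(\R;\Sigma)\cap C(\R;L^2)$; the two conservation laws are inherited from $\psi_\theta$ in the limit.

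Uniqueness is where Lemma~\ref{lem:CH} is decisive. Given two solutions $\psi_1,\psi_2$ with the same datum, I would set $w=\psi_1-\psi_2$ and compute $\frac{d}{dt}\|w\|_{L^2}^2$; the linear part is skew-adjoint and drops out, leaving only the imaginary part of $\lambda\int\bigl(\psi_1\log|\psi_1|^2-\psi_2\log|\psi_2|^2\bigr)\overline{w}\,dx$. Lemma~\ref{lem:CH} bounds the integrand pointwise by $2|w|^2$, so $\frac{d}{dt}\|w\|_{L^2}^2\le C\|w\|_{L^2}^2$ and Gronwall forces $w\equiv0$. The main obstacle throughout is exactly the non-Lipschitz character of the logarithm: it is bypassed for existence by the regularization-plus-compactness scheme together with the Gagliardo--Nirenberg control of the entropy, and for uniqueness by the inequality of Lemma~\ref{lem:CH}, which plays the role the missing Lipschitz estimate would otherwise have.
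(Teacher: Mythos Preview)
Your proposal is correct and follows essentially the same approach as the paper: regularize the logarithm, obtain uniform $\Sigma$ bounds on the approximating sequence via a Gr\"onwall loop, pass to the limit by compactness, and prove uniqueness with Lemma~\ref{lem:CH}. The only noteworthy difference is that the paper obtains the uniform $\Sigma$ bounds more directly---by differentiating the regularized equation and using that the derivative of $\psi\log(\delta+|\psi|^2)$ is pointwise bounded by a multiple of $|\nabla\psi|$ uniformly in $\delta$---so that neither energy conservation nor the Gagliardo--Nirenberg control of the entropy is needed at the a~priori stage; your route through the energy and the entropy bounds is valid but slightly less economical.
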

We recall the main steps of the proof, to show that the above result
is readily extended to the case of a time dependent potential like in
\eqref{eq:u}. Since the role of $\eps>0$ is absent at this level of
discussion, we set $\eps=1$ in order to lighten notations. 
In view of the singularity of the logarithm at the
origin, we consider the family of regularized equations, for
$\delta>0$,
\begin{equation}\label{eq:logNLS-app}
  i\d_t \psi^\delta+\frac{1}{2}\Delta \psi^\delta = V(x)\psi^\delta+\lambda
  \psi^\delta\log\(\delta+|\psi^\delta|^2\)\quad;\quad
  \psi^\delta_{\mid t=0}=\psi_0.  
\end{equation}
For fixed $\delta>0$, the nonlinearity is smooth, with moderate growth
as $|\psi^\delta|\to \infty$ (the nonlinearity is $L^2$-subcritical in any
dimension), so under Assumption~\ref{hyp:V}, global existence in
$\Sigma$ follows for instance from \cite{Ca11} (where a dependence of
$V$ upon time is allowed, like in \eqref{eq:u}): $\psi^\delta, \nabla
\psi^\delta, x\psi^\delta \in C(\R;L^2(\R^d))$.

The sequence $(\psi^\delta)_{0<\delta\le 1}$ converges, thanks to compactness
arguments based on uniform a priori estimates. First, the $L^2$-norm
of $\psi^\delta $ is independent of
time, $\|\psi^\delta(t)\|_{L^2}=\|\psi_0\|_{L^2}$. For $1\le j\le d$,
differentiating \eqref{eq:logNLS-app} with respect to $x_j$ yields
\begin{align*}
  i{\partial}_t {\partial}_j \psi^\delta +\frac{1}{2} \Delta
  {\partial}_j\psi^\delta &=V(x)\d_j \psi^\delta + \d_j V(x)
 \psi^\delta+\lambda \log\left(\delta+|\psi^\delta|^2\right){\partial}_j\psi^\delta
  \\
  &\quad + 2\lambda \frac{1}{\delta+|\psi^\delta|^2}  \RE (
    \overline{\psi^\delta} {\partial}_j\psi^\delta) \psi^\delta.
\end{align*}
By Assumption~\ref{hyp:V}, $|\d_j V(x)|\lesssim 1+|x|$, so the standard
 $L^2$ estimate yields
\begin{equation*}
  \frac{d}{dt}\|\nabla \psi^\delta(t)\|_{L^2}^2\le C \(
  \|\psi^\delta(t)\|_{L^2}^2+\|x \psi^\delta(t)\|_{L^2}^2 + \|\nabla
  \psi^\delta(t)\|_{L^2}^2\),
\end{equation*}
where $C$ is independent of $\delta$. Similarly, 
\begin{align*}
  i{\partial}_t\( x_j \psi^\delta \)+\frac{1}{2} \Delta
 (x_j\psi^\delta) &= \d_j \psi^\delta+V(x)x_j \psi^\delta  +\lambda
               \ln\left(\delta+|\psi^\delta|^2\right)x_j\psi^\delta, 
\end{align*}
hence
\begin{equation*}
  \frac{d}{dt}\|x \psi^\delta(t)\|_{L^2}^2\le 2 \int_{\R^d} |x \psi^\delta(t,x)|
  |\nabla \psi^\delta(t,x)|dx\le\|x \psi^\delta(t)\|_{L^2}^2 + \|\nabla
  \psi^\delta(t)\|_{L^2}^2. 
\end{equation*}
In view of the conservation of the mass, Gr\"onwall lemma implies that
there exists $C$ independent of $\delta$ 
such that
\begin{equation*}
  \|x \psi^\delta(t)\|_{L^2}^2 + \|\nabla
  \psi^\delta(t)\|_{L^2}^2 \le C\(\|u_0\|_{L^2}^2+\|x u_0\|_{L^2}^2 + \|\nabla
  u_0\|_{L^2}^2 \)  e^{Ct}, \quad t\ge 0. 
\end{equation*}
Therefore, we have compactness in space for the sequence
$(\psi^\delta)_{0<\delta\le 1}$. Compactness in time follows from
\eqref{eq:logNLS-app}. Arzela-Ascoli theorem yields  a converging
subsequence, hence the existence part of 
Proposition~\ref{prop:cauchy}.
\smallbreak

Uniqueness follows from the remark that any solution $\psi\in
L^\infty_{\rm loc}(\R;\Sigma)$ to \eqref{eq:logNLS} actually
belongs to $C(\R;L^2(\R^d))$. The standard $L^2$-estimate for the
difference of two solutions $\psi$ and $\phi$ (with a common initial datum
$\psi_0$) yields
\begin{equation*}
  \frac{d}{dt}\|\psi-\phi\|_{L^2}^2 \le 2\left|\IM \int \((\bar
    \psi-\bar \phi) \(\psi\log|\psi|^2 - \phi\log|\phi|^2\)\)\right|.
\end{equation*}
Lemma~\ref{lem:CH} and Gr\"onwall lemma then imply
$\psi\equiv\phi$. The conservations of mass and energy are justified
by the same arguments as in \cite{CazCourant}, for instance. 

Repeating the above arguments, we have a similar result in the case of
\eqref{eq:u}. 
The
fact that the potential may depend on time implies that the energy is
no longer conserved, but, as we do not use the notion of energy here,
we leave out the adapted statement regarding this aspect:
\begin{lemma}\label{lem:u}
  Let $d\ge 1$, $V$ satisfying Assumption~\ref{hyp:V}, and $\lambda\in
  \R$. For any $u_0\in \Sigma$, \eqref{eq:u} has a unique solution $u\in
  L^\infty_{\rm loc}(\R;\Sigma)\cap C(\R;L^2(\R^d))$. In addition,
  $\|u(t)\|_{L^2}=\|u_0\|_{L^2}$ for all $t\in \R$, and
  there exists $C>0$ such that
  \begin{equation*}
    \|\nabla u(t)\|_{L^2}+ \|y u(t)\|_{L^2}\lesssim e^{Ct},\quad
    \forall t\ge 0. 
  \end{equation*}
\end{lemma}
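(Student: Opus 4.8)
The plan is to follow, \emph{mutatis mutandis}, the compactness proof of Proposition~\ref{prop:cauchy} recalled above, the only structural change being that the smooth, real, at most quadratic potential $V(x)$ is replaced by the time-dependent quadratic form
\[
  W(t,y):=\tfrac12\langle y,\nabla^2V(q(t))\,y\rangle .
\]
First I would check that $W$ lies in the class of potentials to which the theory applies: by Assumption~\ref{hyp:V} the Hessian $\nabla^2V$ is bounded on $\R^d$, and $t\mapsto q(t)$ is smooth by Lemma~\ref{lem:hamilton}, so $W\in C^\infty(\R\times\R^d;\R)$ with $\partial_y^\beta W\in L^\infty(\R\times\R^d)$ for every $|\beta|\ge2$, uniformly in $t\in\R$. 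In particular, for each fixed $\delta>0$, the regularized envelope equation
\[
  i\partial_t u^\delta+\tfrac12\Delta u^\delta=W(t,y)\,u^\delta+\lambda u^\delta\log\bigl(\delta+|u^\delta|^2\bigr),\qquad u^\delta_{\mid t=0}=u_0,
\]
has a unique global solution with $u^\delta,\nabla u^\delta,yu^\delta\in C(\R;L^2(\R^d))$, e.g.\ by \cite{Ca11}, where time-dependent at-most-quadratic potentials are allowed and the nonlinearity here is smooth and $L^2$-subcritical.

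Next I would derive a priori bounds uniform in $\delta\in(0,1]$. The $L^2$ norm is conserved, $\|u^\delta(t)\|_{L^2}=\|u_0\|_{L^2}$, since $W$ and $\log(\delta+|u^\delta|^2)$ are real. Differentiating the equation in $y_j$, the new contributions relative to the computation for \eqref{eq:logNLS-app} are $W\,\partial_{y_j}u^\delta$, which drops out of the $L^2$ estimate because $W$ is real, and $(\partial_{y_j}W)u^\delta=\langle e_j,\nabla^2V(q(t))\,y\rangle u^\delta$, bounded pointwise by $\|\nabla^2V\|_{L^\infty}|y|\,|u^\delta|$ and hence contributing $\lesssim\|yu^\delta\|_{L^2}\|\nabla u^\delta\|_{L^2}$. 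Similarly, in the equation satisfied by $y_ju^\delta$ the real factors $W$ and $\log(\delta+|u^\delta|^2)$ again drop out of the $L^2$ estimate, leaving $\tfrac{d}{dt}\|y_j u^\delta\|_{L^2}^2\le2\|\partial_{y_j}u^\delta\|_{L^2}\|y_j u^\delta\|_{L^2}$. Summing over $j$ and applying Grönwall's lemma gives a constant $C$, independent of $\delta$, with
\[
  \|\nabla u^\delta(t)\|_{L^2}^2+\|yu^\delta(t)\|_{L^2}^2\le C\bigl(\|u_0\|_{L^2}^2+\|\nabla u_0\|_{L^2}^2+\|yu_0\|_{L^2}^2\bigr)e^{Ct},\qquad t\ge0 .
\]
The only point that requires the same care as in Proposition~\ref{prop:cauchy} is the term $2\lambda(\delta+|u^\delta|^2)^{-1}\RE(\overline{u^\delta}\partial_{y_j}u^\delta)u^\delta$ produced by differentiating the logarithm: since $|u^\delta|^2/(\delta+|u^\delta|^2)\le1$, its contribution to the estimate is $\lesssim\|\partial_{y_j}u^\delta\|_{L^2}^2$, uniformly in $\delta$.

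Equipped with these bounds, $(u^\delta)_{0<\delta\le1}$ is compact in space; compactness in time follows from the regularized equation; and the Arzela--Ascoli theorem yields a subsequence converging to a solution $u\in L^\infty_{\rm loc}(\R;\Sigma)$ of \eqref{eq:u}, for which the bound above and $\|u(t)\|_{L^2}=\|u_0\|_{L^2}$ pass to the limit. Uniqueness, together with the continuity $u\in C(\R;L^2(\R^d))$, follows exactly as for \eqref{eq:logNLS}: for two solutions $u_1,u_2$ sharing the datum $u_0$, the real potential $W$ again drops out of the $L^2$ estimate, so
\[
  \frac{d}{dt}\|u_1-u_2\|_{L^2}^2\le2|\lambda|\,\Bigl|\IM\!\int(\overline{u_1}-\overline{u_2})\bigl(u_1\log|u_1|^2-u_2\log|u_2|^2\bigr)\Bigr|\lesssim\|u_1-u_2\|_{L^2}^2
\]
by Lemma~\ref{lem:CH}, and Grönwall gives $u_1\equiv u_2$. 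The main obstacle is not conceptual but one of bookkeeping: ensuring every a priori estimate is uniform in the regularization parameter $\delta$, the only delicate instance being the differentiated logarithmic term just discussed, which is precisely the point already settled in the proof of Proposition~\ref{prop:cauchy}.
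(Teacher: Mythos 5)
Your proposal is correct and follows essentially the same route as the paper, which simply notes that the compactness proof of Proposition~\ref{prop:cauchy} (regularization by $\delta$, uniform a priori estimates on $\nabla u^\delta$ and $yu^\delta$ via Gr\"onwall, Arzel\`a--Ascoli, and uniqueness via Lemma~\ref{lem:CH}) carries over verbatim, the only relevant input being $\nabla^2V\in L^\infty(\R^d)$ so that $|\nabla_y W(t,y)|\lesssim |y|$ uniformly in $t$. Your explicit verification of the uniformity in $\delta$ of the differentiated logarithmic term and of the global solvability of the regularized problem for a time-dependent potential (via \cite{Ca11}) matches exactly the points the paper relies on.
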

We note that in this statement, we say nothing about $q(t)$: this is
due to the fact that Assumption~\ref{hyp:V} implies $\nabla^2V\in
L^\infty(\R^d))$, which is the only relevant piece of information in
the argument of the proof. 
\subsection{Further estimates in the critical case}
\label{sec:further}

To prove Theorem~\ref{theo:single}, we will need higher
localization property than in Lemma~\ref{lem:u}, namely
$|y|^2u\in L^\infty_{\rm loc}(\R;L^2(\R^d))$. Due to the presence of
the quadratic potential in \eqref{eq:u}, the regularity is expected to
be the same in space and frequency, so it is natural to work in
$\Sigma^2$.

Unlike in the case without potential (\cite{CaGa18}), it is not clear
that the flow associated to \eqref{eq:u} preserves the $H^2$
regularity. Indeed, the argument of \cite{CaGa18} consists in adapting
the proof from \cite{CazCourant} in the case of power-like
nonlinearities, where one first proves that $\d_t u \in L^\infty_{\rm
    loc}(\R;L^2(\R^d))$, and then uses the equation to infer that
  $\Delta u$ enjoys the same regularity. Here, we face two new
  difficulties. First, even if we know that $\d_t u \in L^\infty_{\rm
    loc}(\R;L^2(\R^d))$, the possible lack of ellipticity of the
  operator
  \begin{equation*}
    -\frac{1}{2}\Delta + \frac{1}{2}\<y,\nabla^2 V\(q(t)\)y\>
  \end{equation*}
  makes it difficult, if not impossible, to infer that $\Delta
  u,|y|^2u \in L^\infty_{\rm
    loc}(\R;L^2(\R^d))$. Typically, one can think of $V(x)=
  -\omega^2|x|^2/2$, a case where $\nabla^2 V\(q(t)\)=-\omega^2I_d$ is
  constant. In the case where $\nabla^2 V\(q(t)\)$ is not constant, we
  also have to deal with the time derivative of this term, which
  appears in the equation satisfied by $\d_t u$, leading to a term
  behaving like $\dot q(t)|y|^2u$, and so estimating $\d_t u$ and
  $|y|^2 u$ are two connected questions. It is actually the presence
  of this term, and the possible exponential growth of $\dot q$, which
  explains the double exponential in the following statement. 
\begin{proposition}\label{prop:Sigma2}
  Let $u_0\in \Sigma^2$. 
The solution to \eqref{eq:u}, $u \in
L^\infty_{\rm loc}(\R;\Sigma)$,  enjoys the extra regularity $u \in
L^\infty_{\rm loc}(\R;\Sigma^2)$. 
In addition, there exists $C$ independent of $t\ge 0$ such that
\begin{equation*}
   \int_{\R^d}\<y\>^4|u(t,y)|^2dy + \int_{\R^d}|\Delta 
  u(t,y)|^2dy \le C e^{e^{Ct}},\quad \forall t\ge 0. 
\end{equation*}
\end{proposition}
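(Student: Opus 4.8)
The plan is to work with the regularized equations \eqref{eq:logNLS-app} (adapted to the time-dependent quadratic potential of \eqref{eq:u}, with $\eps=1$), derive a priori estimates uniform in $\delta$ on the quantities $\|\Delta u^\delta(t)\|_{L^2}$ and $\|\,|y|^2 u^\delta(t)\|_{L^2}$, and then pass to the limit $\delta\to 0$ by the compactness argument already used for Lemma~\ref{lem:u}. Since $\Sigma^2 = H^2\cap\F(H^2)$ and the quadratic potential makes space and frequency symmetric, it is natural to control simultaneously the vector of quantities $\|\Delta u^\delta\|_{L^2}$, $\|\,|y|^2 u^\delta\|_{L^2}$, and the mixed term $\|y\cdot\nabla u^\delta\|_{L^2}$ (or equivalently $\|\nabla(y u^\delta)\|_{L^2}$); these close on each other. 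The new input beyond Lemma~\ref{lem:u} is precisely the second-order localization, so the first step is to compute the equation satisfied by $|y|^2 u^\delta$: applying $|y|^2$ to \eqref{eq:u} produces source terms of the form $(\nabla\cdot y)u^\delta$-type commutators, i.e. $y\cdot\nabla u^\delta$ and $u^\delta$ itself, against the Laplacian, plus the harmless contribution of the logarithmic term (which commutes with multiplication by $|y|^2$ up to nothing, since it is a pointwise multiplier). The standard $L^2$ energy estimate then gives $\frac{d}{dt}\|\,|y|^2 u^\delta\|_{L^2}^2$ bounded by cross terms involving $\|y\cdot\nabla u^\delta\|_{L^2}$ and lower-order quantities already controlled by Lemma~\ref{lem:u}, with constants depending on $\|\nabla^2 V(q(t))\|_{L^\infty}\lesssim 1$.

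Next I would differentiate \eqref{eq:u} twice in space to get the equation for $\Delta u^\delta$: here the potential term $\frac{1}{2}\langle y,\nabla^2 V(q(t))y\rangle u^\delta$ generates, under two derivatives, a term $\nabla^2 V(q(t)) u^\delta$ (bounded) plus $y$-weighted first derivatives of $u^\delta$, i.e. terms like $\langle y,\nabla^2 V\,\nabla u^\delta\rangle$ whose $L^2$ norm is controlled by $\|y\cdot\nabla u^\delta\|_{L^2}$. The logarithmic term, differentiated twice, produces the usual expressions $\log(\delta+|u^\delta|^2)\Delta u^\delta$ (handled by an $\IM$-cancellation as in the proof of Lemma~\ref{lem:u}) and quadratic-in-$\nabla u^\delta$ terms divided by $\delta+|u^\delta|^2$; these are exactly the terms treated in \cite{CaGa18} via Gagliardo–Nirenberg, and the bound $|\RE(\bar z w)/(\delta+|z|^2)|\le |w|$ makes them uniform in $\delta$. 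Finally, the mixed quantity $y\cdot\nabla u^\delta$: its evolution equation couples $\Delta(y u^\delta)$-type terms back to $\Delta u^\delta$ and $|y|^2 u^\delta$, so one obtains a closed differential inequality for $N^\delta(t):=\|\Delta u^\delta(t)\|_{L^2}^2+\|\,|y|^2 u^\delta(t)\|_{L^2}^2+\|y\cdot\nabla u^\delta(t)\|_{L^2}^2$ of the form $\frac{d}{dt}N^\delta \lesssim N^\delta + (\text{lower order})$, where the lower-order part is bounded using Lemma~\ref{lem:u} by $Ce^{Ct}$.

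The main obstacle, and the source of the double exponential in the statement, is that the coefficient in front of $N^\delta$ in this differential inequality is \emph{not} bounded uniformly in time when $V$ is not exactly quadratic. Precisely, the equation for $|y|^2 u^\delta$ (and for $y\cdot\nabla u^\delta$) inherits, through the chain rule applied to $\nabla^2 V(q(t))$ along the flow, a term proportional to $\dot q(t)\cdot\nabla^3 V(q(t))$ acting on $|y|^2 u^\delta$ or $y u^\delta$; by Assumption~\ref{hyp:V} we only know $\nabla^3 V\in L^\infty$, but $|\dot q(t)|=|p(t)|$ grows at most like $e^{C_0 t}$ by Lemma~\ref{lem:hamilton}, so the differential inequality reads $\frac{d}{dt}N^\delta(t)\lesssim e^{C_0 t}N^\delta(t)+Ce^{Ct}$. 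Integrating this via Grönwall with a time-dependent rate $e^{C_0 t}$ yields $N^\delta(t)\lesssim \exp(C\int_0^t e^{C_0 s}\,ds)\lesssim e^{e^{Ct}}$, uniformly in $\delta\in(0,1]$. Lower semicontinuity of the $L^2$ norms under the weak limit $u^\delta\rightharpoonup u$ (the convergence being the one from the compactness argument recalled after Proposition~\ref{prop:cauchy}) then transfers the bound to $u$ itself, giving $u\in L^\infty_{\rm loc}(\R;\Sigma^2)$ together with the stated estimate $\int\langle y\rangle^4|u(t)|^2 + \int|\Delta u(t)|^2 \le Ce^{e^{Ct}}$. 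One should double-check that the regularized solutions $u^\delta$ genuinely have $\Delta u^\delta, |y|^2 u^\delta\in C(\R;L^2)$ to begin with — this follows from the smoothness of the regularized nonlinearity and the $\Sigma^2$-persistence theory for \eqref{eq:logNLS-app} exactly as in \cite{Ca11,CazCourant}, so the formal computations above are legitimate at the regularized level.
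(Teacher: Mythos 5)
Your overall architecture (regularize, prove $\delta$-uniform a priori bounds on a closed system of quadratic quantities, pass to the limit by lower semicontinuity) matches the paper, and your equations for $|y|^2u^\delta$ and $y\cdot\nabla u^\delta$ are the right ones. But the central step of your scheme --- obtaining the evolution of $\Delta u^\delta$ by differentiating the equation twice in space --- does not work, and this is exactly the difficulty the paper is organized around. Differentiating the regularized nonlinearity twice produces, besides the benign term $\log(\delta+|u^\delta|^2)\,\partial_k\partial_j u^\delta$, terms of the form
\begin{equation*}
\partial_k u^\delta\,\frac{\RE\bigl(\overline{u^\delta}\,\partial_j u^\delta\bigr)}{\delta+|u^\delta|^2}
\quad\text{and}\quad
u^\delta\,\frac{\RE\bigl(\partial_k\overline{u^\delta}\,\partial_j u^\delta\bigr)}{\delta+|u^\delta|^2},
\end{equation*}
whose pointwise size is $|\nabla u^\delta|^2\,|u^\delta|/(\delta+|u^\delta|^2)$, i.e.\ essentially $|\nabla u^\delta|^2/|u^\delta|$ (or $|\nabla u^\delta|^2/\sqrt\delta$). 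The bound $|\RE(\bar z w)|/(\delta+|z|^2)\le |w|/|z|$ that saves the first-derivative estimate no longer helps, because the outer factor is now $\partial_k u^\delta$ rather than $u^\delta$, and there is no $\delta$-uniform $L^2$ control of $|\nabla u^\delta|^2/|u^\delta|$. Your appeal to \cite{CaGa18} is misplaced: that paper also does \emph{not} differentiate the logarithm twice; it adapts the classical trick of estimating $\partial_t u$ in $L^2$ and then reading $\Delta u$ off the equation. The present paper does the same: it closes the system on $\||y|^2u^\delta\|_{L^2}+\|y\cdot\nabla u^\delta\|_{L^2}+\|\partial_t u^\delta\|_{L^2}$, the nonlinear terms in the $\partial_t u^\delta$ equation being handled by the imaginary-part cancellation and the pointwise bound, and then recovers $\|\Delta u^\delta\|_{L^2}\lesssim \|\partial_t u^\delta\|_{L^2}+\||y|^2u^\delta\|_{L^2}+\|u^\delta\log(\delta+|u^\delta|^2)\|_{L^2}$ directly from \eqref{eq:udelta}.

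A symptom of this gap is your explanation of the double exponential. Applying the multipliers $|y|^2$ or $y\cdot\nabla$ to the equation never differentiates $\nabla^2V(q(t))$ in time, so no term $\dot q(t)\cdot\nabla^3V(q(t))$ appears in those equations; if your system of $\Delta$, $|y|^2$ and $y\cdot\nabla$ really closed with bounded coefficients, Gr\"onwall would give a single exponential. The exponentially growing coefficient $e^{C_0t}$ enters only through the equation for $\partial_t u^\delta$, where the time derivative hits $\nabla^2V(q(t))$ and produces $\tfrac12\bigl(\nabla^3V(q(t))\cdot y\cdot y\cdot\dot q(t)\bigr)u^\delta$; this is precisely why the detour through $\partial_t u^\delta$, forced by the nonlinearity, is responsible for the double exponential in the statement. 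Your Gr\"onwall computation with rate $e^{C_0t}$ and the final passage to the limit are fine once the system is set up as in the paper.
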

\begin{proof}
  We recall that $u$ is constructed as the limit of the family
  $(u^\delta)_{0<\delta\le 1}$ as $\delta \to 0$, where $u^\delta$
  solves
 \begin{equation}
  \label{eq:udelta}
  i\d_t u^\delta +\frac{1}{2}\Delta u^\delta = \frac{1}{2}\<y,\nabla^2
  V\(q(t)\)y\>u^\delta 
  + \lambda u^\delta\log\(\delta+|u^\delta|^2\),\quad u^\delta_{\mid t=0}=u_0.
\end{equation}
We prove that there exists $C$ independent of $t\ge 0$ and $\delta\in
(0,1)$ such that
\begin{equation}\label{eq:doubleexpdelta}
  \int_{\R^d}\<y\>^4|u^\delta(t,y)|^2dy + \int_{\R^d}|\Delta
  u^\delta(t,y)|^2dy \le C e^{e^{Ct}}. 
\end{equation}
The equation satisfied by $|y|^2 u^\delta $ is
\begin{align*}
  i\d_t (|y|^2u^\delta) +\frac{1}{2}\Delta (|y|^2u^\delta)
  &=
  \frac{1}{2}\left[\Delta,|y|^2\right]u^\delta+
    \frac{1}{2}\<y,\nabla^2
  V\(q(t)\)y\>|y|^2u^\delta \\
  &\quad + \lambda |y|^2u^\delta\log\(\delta+|u^\delta|^2\).
\end{align*}
By direct computations,
\begin{equation*}
  \left[\Delta,|y|^2\right] = 4y\cdot \nabla +2d, 
\end{equation*}
so the standard energy estimate for Schr\"odinger equations yields
\begin{equation}\label{eq:borne1}
  \frac{d}{dt}\||y|^2 u^\delta\|_{L^2} \lesssim \|y\cdot\nabla
  u^\delta\|_{L^2} + \|u^\delta\|_{L^2}.
\end{equation}
We therefore  examine the equation satisfied by $y\cdot\nabla
u^\delta$. First, $\nabla u^\delta$ solves
\begin{align*}
  i\d_t \nabla u^\delta +\frac{1}{2}\Delta \nabla u^\delta 
  &=\(\nabla^2  V\(q(t)\)y\)u^\delta+
    \frac{1}{2}\<y,\nabla^2
  V\(q(t)\)y\>\nabla u^\delta \\
  &\quad + \lambda \(\nabla u^\delta\)\log\(\delta+|u^\delta|^2\)+2\lambda
    \frac{u^\delta\RE\(\overline u^\delta \nabla
    u^\delta\)}{\delta+|u^\delta|^2}. 
\end{align*}
Taking the inner product with $y$ yields
\begin{align*}
  i\d_t y\cdot \nabla u^\delta +\frac{1}{2}\Delta \(y\cdot \nabla u^\delta \)
  &=\frac{1}{2}\left[\Delta,y\right]\cdot \nabla u^\delta +  \<y,\nabla^2
    V\(q(t)\)y\>u^\delta\\
  &\quad+ 
    \frac{1}{2}\<y,\nabla^2
  V\(q(t)\)y\>y\cdot \nabla u^\delta \\
  &\quad + \lambda \(y\cdot \nabla u^\delta\)\log\(\delta+|u^\delta|^2\)+2\lambda
    \frac{u^\delta\RE\(\overline u^\delta y\cdot \nabla
    u^\delta\)}{\delta+|u^\delta|^2},
\end{align*}
and we note that
\begin{equation*}
  \frac{1}{2}\left[\Delta,y\right]\cdot \nabla u^\delta= \Delta u^\delta,
\end{equation*}
so by energy estimate, recalling that $\nabla^2 V\in L^\infty(\R^d)$,
\begin{equation}\label{eq:borne2}
   \frac{d}{dt}\|y\cdot\nabla  u^\delta\|_{L^2} \lesssim \|\Delta
  u^\delta\|_{L^2} + \||y|^2 u^\delta\|_{L^2} +\|y\cdot\nabla u^\delta\|_{L^2}.
\end{equation}
To bound the term $\Delta u^\delta$, we note that differentiating the
(regularized) logarithmic nonlinearity makes it impossible to get
estimates which are uniform in $\delta\in (0,1)$, so we use directly
the information given by \eqref{eq:udelta},
\begin{equation}\label{eq:estDelta}
  \|\Delta  u^\delta\|_{L^2}\lesssim \|\d_t u^\delta\|_{L^2} +
  \||y|^2u^\delta\|_{L^2} +
  \left\|u^\delta\log\(\delta+|u^\delta|^2\)\right\|_{L^2}. 
\end{equation}
Note the inequality
\begin{align*}
  \left|u^\delta\log\(\delta+|u^\delta|^2\)\right|^2
  &\lesssim |u^\delta|^2 \( \(\delta+|u^\delta|^2\)^{\eta/2}+
    \(\delta+|u^\delta|^2\)^{-\eta/2}\)\\
 &\lesssim |u^\delta|^{2+\eta} + |u^\delta|^{2-\eta}, 
\end{align*}
for any $\eta>0$, where the implicit multiplicative constant depends
on $\eta>0$ but not on $\delta\in (0,1)$. For $\eta>0$ sufficiently small,
this implies, in view of (the proof of) Lemma~\ref{lem:u}, since $\Sigma
\hookrightarrow L^{2+\eta}\cap L^{2-\eta}$ provided that $\eta>0$ is
sufficiently small (see \eqref{ineq:GN} and \eqref{eq:GNdual}),
\begin{equation}\label{eq:estlog}
  \left\|u^\delta\log\(\delta+|u^\delta|^2\)\right\|_{L^2}\lesssim
  e^{Ct},\quad t\ge 0.
\end{equation}
To close the system of estimates \eqref{eq:borne1}-\eqref{eq:borne2},
we estimate $\d_t u^\delta$ in $L^2$. For
$u_0\in \Sigma^2$, $\d_t u^\delta$ solves an equation rather similar
to the one satisfied by
$\nabla u^\delta$, but we must pay attention to the initial data,
given by \eqref{eq:udelta} at $t=0$:
\begin{align*}
  i\d_t \(\d_t u^\delta\) +\frac{1}{2}\Delta \(\d_t u^\delta\)
  &=
  \frac{1}{2}\<y,\nabla^2 
    V\(q(t)\)y\>\d_t u^\delta 
  +\frac{1}{2}\(\nabla^3 
  V\(q(t)\)\cdot y\cdot y\cdot \dot q(t)\) u^\delta\\
  &\quad   + \lambda \d_t u^\delta\log\(\delta+|u^\delta|^2\)
    + 2\lambda
   \frac{u^\delta \RE \(\overline u^\delta\d_t
    u^\delta\)}{|\delta+|u^\delta|^2},\\
  \d_t u^\delta_{\mid t=0}
  & = \frac{i}{2}\Delta u_0 -\frac{i}{2}\<y,\nabla^2 V(q_0)y\>u_0
   -i\lambda u_0\log\(\delta+|u_0|^2\).
\end{align*}
Recalling that the third derivatives of $V$ are bounded and that $\dot
q(t)=p(t)$ grows at most exponentially in time
(Lemma~\ref{lem:hamilton}),
energy estimate yields
\begin{equation}\label{eq:borne3}
   \frac{d}{dt}\|\d_t u^\delta\|_{L^2} \lesssim
   e^{Ct}\||y|^2u^\delta\|_{L^2} + \|\d_t u^\delta\|_{L^2} .
 \end{equation}
 We also note that in view of \eqref{eq:u}, $\| \d_t u^\delta_{\mid t=0}\|_{L^2}\le C\(
 \|u_0\|_{\Sigma^2}\)$. Summing \eqref{eq:borne1}, \eqref{eq:borne2}
 and \eqref{eq:borne3}, we find, using \eqref{eq:estDelta} and \eqref{eq:estlog},
 \begin{align*}
   \frac{d}{dt}\( \||y|^2u^\delta\|_{L^2} +  \|y\cdot\nabla
   u^\delta\|_{L^2} +\|\d_t u^\delta\|_{L^2} \)
   &\lesssim
     e^{Ct}\||y|^2u^\delta\|_{L^2} + \|\d_t u^\delta\|_{L^2} \\
   &\quad + \|y\cdot\nabla
     u^\delta\|_{L^2} +e^{Ct}.
 \end{align*}
 Gr\"onwall lemma yields
 \begin{equation*}
   \||y|^2u^\delta\|_{L^2} +  \|y\cdot\nabla
   u^\delta\|_{L^2} +\|\d_t u^\delta\|_{L^2} \le
   C\(\|u_0\|_{\Sigma^2}\) e^{e^{Ct}},
 \end{equation*}
 and invoking \eqref{eq:estDelta} and \eqref{eq:estlog},
 \eqref{eq:doubleexpdelta} follows. Since we already know that
 $u^\delta$ converges to $u$, the proposition stems from Fatou's
 lemma.  
\end{proof}

\section{Subcritical case}
\label{sec:subcritical}

In this section, we prove Proposition~\ref{prop:subcritical}. We start
by recalling some properties of the solution $\varphi^\eps$ to
\eqref{eq:evolution-linear}.

\subsection{Properties of $\varphi_{\rm app}^\varepsilon$ and $\varphi^\eps$}
We resume the approximate solution of the linear case,
\begin{equation*}
    \varphi^\varepsilon_{\rm app}(t,x)=\frac{1}{\varepsilon^{d /
        4}}v\(t,\frac{x-q(t)}{\sqrt{\varepsilon}}\) e^{i\phi_{\rm
        lin}(t, x)/\eps },
\end{equation*}
where $\phi_{\rm lin}$ is defined in \eqref{eq:phi_lin}, and infer
that for any $1<p<\infty$, 
\begin{equation*}
    \|\varphi^\varepsilon_{\rm app}(t)\|_{L^p_x}=\varepsilon^{\frac{d}{2}\(\frac{1}{p}-\frac{1}{2}\)}\|v(t)\|_{L^p_y}.
  \end{equation*}
In the linear case, it is easy to prove (see e.g. \cite{Carles2021book}):
\begin{lemma}\label{lem:(exp)_k}
  Let $k\ge
  1$. If $u_0\in
  \Sigma^k$, there exists $C>0$ such that the solution $v\in L^\infty_{\rm
    loc}(\R;\Sigma^k)$ to \eqref{eq:v} satisfies
  $\|v(t)\|_{L^2}=\|u_0\|_{L^2}$ and, for all $\beta
  \in \N^d$ with $1\le |\beta|\le k$, 
  \begin{equation*}
    \|\d_y^\beta v(t)\|_{L^2(\R^d)}+ \|y^\beta
    v(t)\|_{L^2(\R^d)}\lesssim e^{Ct},\quad t\ge 0. 
  \end{equation*}
\end{lemma}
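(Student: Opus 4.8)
The plan is to exploit that \eqref{eq:v} is a \emph{linear} Schr\"odinger equation whose Hamiltonian $-\frac12\Delta+\frac12\langle y,M(t)y\rangle$, with $M(t):=\nabla^2V(q(t))$, is \emph{quadratic} in $y$, and that $M(t)$ is symmetric, continuous in $t$, and bounded by Assumption~\ref{hyp:V} (only $\nabla^2V\in L^\infty(\R^d)$ is needed, so no information on $q(t)$ enters). First, existence of $v\in L^\infty_{\rm loc}(\R;\Sigma^k)$ and the identity $\|v(t)\|_{L^2}=\|u_0\|_{L^2}$ are standard: mass conservation follows from taking the imaginary part of the $L^2$ pairing of \eqref{eq:v} with $v$ (the real quadratic potential contributes nothing), and the $\Sigma^k$ regularity either comes from the theory of propagators generated by quadratic Hamiltonians, which may be quoted from \cite{Carles2021book,robert2021coherent}, or is recovered by the same Galerkin/regularization scheme as in Section~\ref{sec:cauchy}, which is lighter here since there is no nonlinearity.

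The core of the argument is a hierarchy of energy estimates that closes precisely because the potential is quadratic. For $k=1$, differentiating \eqref{eq:v} in $y_j$ and, separately, multiplying it by $y_j$ (using $[\Delta,y_j]=2\d_j$) yields
\begin{align*}
  i\d_t(\d_j v)+\frac12\Delta(\d_j v)&=\frac12\langle y,M(t)y\rangle\,\d_j v+(M(t)y)_j\,v,\\
  i\d_t(y_j v)+\frac12\Delta(y_j v)&=\frac12\langle y,M(t)y\rangle\,y_j v+\d_j v.
\end{align*}
In the $L^2$ energy estimate the real quadratic term disappears and $M(t)$ is bounded, so
\begin{equation*}
  \frac{d}{dt}\|\nabla v(t)\|_{L^2}\lesssim\|y\,v(t)\|_{L^2},\qquad
  \frac{d}{dt}\|y\,v(t)\|_{L^2}\lesssim\|\nabla v(t)\|_{L^2},
\end{equation*}
and Gr\"onwall's lemma gives $\|\nabla v(t)\|_{L^2}+\|y\,v(t)\|_{L^2}\lesssim e^{Ct}$.

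For general $k$, I would apply to \eqref{eq:v} all monomial operators $y^\alpha\d_y^\beta$ with $|\alpha|+|\beta|\le k$: because the potential is quadratic, the commutators $[\frac12\Delta,y^\alpha\d_y^\beta]$ and $[\frac12\langle y,M(t)y\rangle,y^\alpha\d_y^\beta]$ are finite sums of monomial operators of order at most $|\alpha|+|\beta|$, with coefficients controlled by $\|\nabla^2V\|_{L^\infty}$ only. Hence, setting $N_k(t):=\sum_{|\alpha|+|\beta|\le k}\|y^\alpha\d_y^\beta v(t)\|_{L^2}$, summing the corresponding $L^2$ estimates gives $\frac{d}{dt}N_k(t)\lesssim N_k(t)$, whence $N_k(t)\lesssim N_k(0)e^{Ct}\lesssim\|u_0\|_{\Sigma^k}e^{Ct}$, using the standard fact that $\|\cdot\|_{\Sigma^k}$ dominates all mixed monomials $\|y^\alpha\d_y^\beta\,\cdot\,\|_{L^2}$ with $|\alpha|+|\beta|\le k$. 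This contains the asserted bound. (An equivalent route: writing $U(t)$ for the propagator of \eqref{eq:v}, the Heisenberg observables $U(t)^*y_jU(t)$ and $U(t)^*(-i\d_j)U(t)$ solve the linearized classical, or Jacobi, system with bounded time-dependent generator $\bigl(\begin{smallmatrix}0&I\\-M(t)&0\end{smallmatrix}\bigr)$, hence are linear combinations of $y$ and $\nabla$ with coefficients $\mathcal{O}(e^{Ct})$; applying products of at most $k$ of them to $u_0$ gives the result directly.)

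I do not expect a genuine obstacle here: the content is simply that the quadratic nature of the potential makes this hierarchy of weighted-derivative estimates triangular and self-contained, with all growth rates governed by $\|\nabla^2V\|_{L^\infty}<\infty$. The only points requiring a little care are the rigorous justification of the formal energy identities — handled by the approximation already used in Section~\ref{sec:cauchy} — and the elementary book-keeping that $\Sigma^k$ controls the mixed weighted derivatives of the initial datum.
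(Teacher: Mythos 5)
Your proof is correct, and it is essentially the argument the paper relies on: the paper does not prove Lemma~\ref{lem:(exp)_k} itself (it cites \cite{Carles2021book} as standard), but it carries out exactly your $k=1$ scheme — apply $\nabla$ and multiplication by $y$, drop the real quadratic potential in the $L^2$ energy identity, and close by Gr\"onwall — when establishing the analogous bound \eqref{eq:controlveps} for $v^\eps$, and again in Lemma~\ref{lem:u} and Proposition~\ref{prop:Sigma2}. Your extension to general $k$ via the closed hierarchy of monomial operators $y^\alpha\d_y^\beta$ (or equivalently the Heisenberg/Jacobi observables) is the standard way this is done and is sound.
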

We infer from \eqref{ineq:GN} and \eqref{eq:GNdual}  that if $u_0\in \Sigma$, for $0<\eta<\min
\(1,\frac{4}{d+2}\)$, there exists $C>0$
such that,
\begin{equation}
  \label{eq:varphiLp}
  \|\varphi_{\rm app}^\eps(t)\|_{L^{2-\eta}}^{2-\eta}\lesssim
  \eps^{\frac{d\eta}{4}}e^{Ct}\quad;\quad 
  \|\varphi_{\rm app}^\eps(t)\|_{L^{2+\eta}}^{2+\eta}\lesssim
  \eps^{-\frac{d\eta}{4}}e^{Ct},\quad t\ge 0. 
\end{equation}
We now show that the exact linear solution $\varphi^\eps$ satisfies
similar estimates. Define $v^\eps$ by the same rescaling as the
one relating $\varphi_{\rm app}^\eps$ and $v$,
\begin{equation*}
  \varphi^\varepsilon(t,x)=\frac{1}{\varepsilon^{d /
        4}}v^\eps\(t,\frac{x-q(t)}{\sqrt{\varepsilon}}\) e^{i\phi_{\rm
        lin}(t, x)/\eps}. 
  \end{equation*}
Direct
  computations (see e.g. \cite{Carles2021book}) show that $v^\eps$ solves
\begin{equation}\label{eq:veps}
i \partial_tv^{\varepsilon}+\frac{1}{2} \Delta v^{\varepsilon}=V^{\varepsilon}(t,y)v^{\varepsilon},
\end{equation}
where
\begin{equation*}
V^\eps(t,y) = \frac{1}{\eps}\(V\(q(t)+y\sqrt \eps\) -
V\(q(t)\)-\sqrt\eps y\cdot\nabla V \(q(t)\)\) . 
\end{equation*}
We classically have $\|v^\eps(t)\|_{L^2(\R^d)}=\|u_0\|_{L^2(\R^d)}$
for all $t\in \R$. 
Taylor formula yields
\begin{equation*}
 V^\eps(t,y) = \int_0^1(1-\theta)\<y,\nabla^2 V\(q(t)+\theta
 y\sqrt\eps\)y\>d\theta.  
\end{equation*}
In view of Assumption~\ref{hyp:V}, there exists $C>0$ independent of
$\eps\in (0,1)$ and $t\in \R$ such that
\begin{equation*}
  |\nabla V^\eps(t,y)|\le C\(1+|y|\). 
\end{equation*}
Applying the operator $\nabla$ to \eqref{eq:veps}, we get:
\begin{align*}
\left(i \partial_t+\frac{1}{2} \Delta\right) \nabla
  v^\varepsilon=
  V^\eps(t,y)\nabla v^\eps + v^\eps \nabla V^\eps(t,y),
\end{align*}
and the energy estimate yields
\begin{align*}
  \|\nabla  v^\varepsilon(t)\|_{L^2}
   &\lesssim \|\nabla u_0\|_{L^2}+
     \int_0^t \|v^\varepsilon(s)\nabla V^\eps(s)\|_{L^2}ds \\
  &\lesssim
    \|\nabla u_0\|_{L^2}+ 
  \int_0^t \|(1+|y|)v^\varepsilon(s)\|_{L^2}ds .
\end{align*}
Applying the multiplication operator $y$ to \eqref{eq:error&alpha>1},
we get similarly: 
\begin{equation*}
\left(i\partial_t+\frac{1}{2} \Delta\right) \(y
v^\varepsilon\)=V^{\varepsilon}y
v^\varepsilon+\nabla v^\varepsilon,
\end{equation*}
and, by energy estimate,
\begin{align*}
  \|y  v^\varepsilon(t)\|_{L^2}
  & \lesssim \|y u_0\|_{L^2}+
  \int_0^t \|\nabla v^\varepsilon(s)\|_{L^2}ds .
\end{align*}
Summing these two inequalities, Gr\"onwall lemma yields
\begin{equation}\label{eq:controlveps}
   \|\nabla  v^\varepsilon(t)\|_{L^2}+ \|y
  v^\varepsilon(t)\|_{L^2} \lesssim \|u_0\|_{\Sigma}e^{Ct},\quad t\ge
  0. 
\end{equation}
Resuming the computations presented in the case of $\varphi_{\rm
  app}^\eps$, we infer:
\begin{lemma}\label{lem:varphiLp}
Let  $d\ge 1$, $0<\eta<\min
\(1,\frac{4}{d+2}\)$, and $u_0\in \Sigma$. There exists $C>0$
independent of $\eps\in 
(0,1)$ and $t\ge 0$ such that
\begin{equation*}
  \|\varphi^\eps(t)\|_{L^{2-\eta}}^{2-\eta}\lesssim
  \eps^{\frac{d\eta}{4}}e^{Ct}\quad;\quad 
  \|\varphi^\eps(t)\|_{L^{2+\eta}}^{2+\eta}\lesssim
  \eps^{-\frac{d\eta}{4}}e^{Ct},\quad t\ge 0. 
\end{equation*}
\end{lemma}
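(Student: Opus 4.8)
The plan is to deduce these bounds from the a priori estimate \eqref{eq:controlveps} on the rescaled profile $v^\eps$, in exactly the way \eqref{eq:varphiLp} was obtained from Lemma~\ref{lem:(exp)_k}. Since $v^\eps$ is related to $\varphi^\eps$ by the same rescaling $y=(x-q(t))/\sqrt\eps$ as the one relating $v$ to $\varphi_{\rm app}^\eps$, and $|e^{i\phi_{\rm lin}(t,x)/\eps}|=1$, the change of variables gives, for every $1<p<\infty$,
\[
\|\varphi^\eps(t)\|_{L^p_x}=\eps^{\frac{d}{2}\(\frac{1}{p}-\frac{1}{2}\)}\|v^\eps(t)\|_{L^p_y}.
\]
Thus everything reduces to controlling $\|v^\eps(t)\|_{L^{2-\eta}}$ and $\|v^\eps(t)\|_{L^{2+\eta}}$ by $e^{Ct}$.

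For the exponent $p=2-\eta$, the restriction $0<\eta<\min\(1,\frac{4}{d+2}\)$ is exactly what ensures $\max\(1,\frac{2d}{d+2}\)<2-\eta\le 2$, so the dual Gagliardo--Nirenberg inequality \eqref{eq:GNdual} applies and yields $\|v^\eps(t)\|_{L^{2-\eta}}\lesssim\|v^\eps(t)\|_{L^2}^{1-\theta}\|y v^\eps(t)\|_{L^2}^{\theta}$ for the corresponding $\theta\in(0,1)$; combining this with $\|v^\eps(t)\|_{L^2}=\|u_0\|_{L^2}$ and \eqref{eq:controlveps} gives $\|v^\eps(t)\|_{L^{2-\eta}}\lesssim e^{Ct}$, after renaming the constant in the exponent (the power $\theta$ being absorbed, and $\|u_0\|_\Sigma$ being a fixed constant). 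Symmetrically, the same range of $\eta$ ensures $2+\eta<\frac{2d}{(d-2)_+}$, so \eqref{ineq:GN} together with \eqref{eq:controlveps} gives $\|v^\eps(t)\|_{L^{2+\eta}}\lesssim e^{Ct}$.

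It then suffices to insert these two bounds into the scaling identity: raising it to the power $2-\eta$ (resp. $2+\eta$), the exponent of $\eps$ becomes $\frac{d}{2}\(\frac{1}{2-\eta}-\frac{1}{2}\)(2-\eta)=\frac{d\eta}{4}$ (resp. $\frac{d}{2}\(\frac{1}{2+\eta}-\frac{1}{2}\)(2+\eta)=-\frac{d\eta}{4}$), which produces precisely
\[
\|\varphi^\eps(t)\|_{L^{2-\eta}}^{2-\eta}\lesssim\eps^{\frac{d\eta}{4}}e^{Ct},\qquad
\|\varphi^\eps(t)\|_{L^{2+\eta}}^{2+\eta}\lesssim\eps^{-\frac{d\eta}{4}}e^{Ct}.
\]
There is no real obstacle here: the argument is the verbatim analogue of the one already carried out for $\varphi_{\rm app}^\eps$, the only input specific to $\varphi^\eps$ being the uniform bound \eqref{eq:controlveps}, and the only point requiring a line of checking is that the admissible range of $\eta$ is dictated precisely by the simultaneous applicability of the two Gagliardo--Nirenberg inequalities \eqref{ineq:GN}--\eqref{eq:GNdual}.
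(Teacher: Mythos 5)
Your proof is correct and follows exactly the route the paper intends: the scaling identity reduces the claim to $L^{2\pm\eta}$ bounds on $v^\eps$, which follow from the Gagliardo--Nirenberg inequalities \eqref{ineq:GN}--\eqref{eq:GNdual}, the conservation of $\|v^\eps\|_{L^2}$, and the a priori bound \eqref{eq:controlveps}, with the exponent arithmetic giving $\eps^{\pm d\eta/4}$ as stated. The paper's own "proof" is precisely the instruction to resume the computations done for $\varphi_{\rm app}^\eps$ with $v$ replaced by $v^\eps$, which is what you have written out.
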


\subsection{Linear approximation in the subcritical case}
In this subsection, we prove the first part of
Proposition~\ref{prop:subcritical}. We assume $u_0\in \Sigma$, and set 
$w^\varepsilon=\psi^\varepsilon-\varphi^\varepsilon$. This error satisfies 
\begin{align*}
i\varepsilon\partial_t w^\varepsilon+\frac{\varepsilon^2}{2} \Delta w^\varepsilon=& V(x) w^\varepsilon +\lambda\varepsilon^\alpha \left(\psi^\varepsilon\log|\psi^\varepsilon|^2-\varphi^\varepsilon\log|\varphi^\varepsilon|^2\right)\\
&+\lambda\varepsilon^\alpha\varphi^\varepsilon\log|\varphi^\varepsilon|^2\quad
  ;\quad w^\eps_{\mid t=0}=0. 
\end{align*}
Recall the strategy for the energy estimate: multiply both sides by
$\overline{w^\eps}$, integrate, and take the imaginary part,  
\begin{align*}
    \frac{\varepsilon}{2} \frac{d}{d
  t}\|w^\varepsilon\|_{L^2}^2
  &=\lambda \varepsilon^\alpha \IM \int
    \overline{\psi^\varepsilon-\varphi^\varepsilon}\left(\psi^\varepsilon
   \log |\psi^\varepsilon|^2-\varphi^\varepsilon
    \log  |\varphi^\varepsilon|^2\right)+\lambda\varepsilon^\alpha\IM
     \int  \overline{w^\varepsilon}\varphi^\varepsilon\log|\varphi^\varepsilon|^2\\ 
 &\le 2|\lambda| \varepsilon^\alpha
   \|w^\varepsilon\|_{L^2}^2+|\lambda| \varepsilon^\alpha
   \|w^\varepsilon\|_{L^2}\|\varphi^\varepsilon \log
   |\varphi^\varepsilon|^2\|_{L^2}, 
\end{align*}
where we have used Lemma~\ref{lem:CH} and Cauchy-Schwarz inequality. 
Gr\"onwall lemma yields 
\begin{equation}\label{eq:gronwallsub1}
    \|w^\varepsilon\|_{L^2}\leq\varepsilon^{\alpha-1}\int_0^t e^{C\varepsilon^{\alpha-1}(t-s)}\|\varphi^\varepsilon \log |\varphi^\varepsilon|^2(s)\|_{L^2} ds.
\end{equation}
We estimate  $\varphi^\varepsilon (t,\cdot)\log
|\varphi^\varepsilon(t,\cdot)|^2$ 
in  $ L^2\left(\mathbb{R}^d\)$: for $0<\eta<\min (1,\frac{4}{d+2})$
arbitrarily small, 
\begin{equation}\label{ineq:log to lp}
    |\varphi^\varepsilon|^2\left(\log
      |\varphi^\varepsilon|^2\right)^2
    \lesssim |\varphi^\varepsilon|^{2-\eta}+
    |\varphi^\varepsilon|^{2+\eta},
\end{equation}
and so, in view Lemma~\ref{lem:varphiLp}, 
\begin{equation*}
  \|\varphi^\varepsilon (t)\log
  |\varphi^\varepsilon(t)|^2\|_{L^2}^2\lesssim
  \eps^{-\frac{d\eta}{4}}e^{Ct},\quad t\ge 0. 
\end{equation*}
Plugging this estimate into \eqref{eq:gronwallsub1} yields the first
part of  Proposition~\ref{prop:subcritical}.
\subsection{Error estimate in the linear case}

We prove the end of Proposition~\ref{prop:subcritical}.
Recall that $\varphi^\eps$ solves the linear equation
\eqref{eq:evolution-linear}. Its standard approximation is
$\varphi_{\rm app}^\eps$, see
e.g. \cite{robert2021coherent} or 
\cite{Carles2021book}. 
Denote the error for the envelope by
$  \delta_v^\varepsilon=v^\varepsilon-v$.
It satisfies
\begin{equation}\label{eq:error&alpha>1}
i \partial_t\delta_v^\varepsilon+\frac{1}{2}
\Delta\delta_v^\varepsilon=V^{\varepsilon}(t,y)
\delta_v^\varepsilon+\left(V^{\varepsilon}(t,y)-\frac{1}{2}\left\langle
    y, \nabla^2 V(q(t)) y\right\rangle\right) v. 
\end{equation}
Taylor expansion yields the uniform in time pointwise estimate
\begin{equation*}
  \left| V^{\varepsilon}(t,y)-\frac{1}{2}\left\langle
    y, \nabla^2 V(q(t)) y\right\rangle\right|\lesssim \sqrt\eps |y|^3,
\end{equation*}
hence, by energy estimate, since $\delta_v^\eps=0$ at $t=0$,
\begin{equation}\label{eq:deltavL2}
  \|\delta_v^\eps(t)\|_{L^2} \lesssim \sqrt\eps \int_0^t \||y|^3
  v(s)\|_{L^2}ds\lesssim \sqrt\eps e^{Ct},
\end{equation}
where we have invoked Lemma~\ref{lem:(exp)_k} with $k=3$. 
\smallbreak

The intermediary regularity assumption $u_0\in \Sigma^2$ will be
useful in the critical case $\alpha=1$, and consists in resuming the
energy estimate: multiply \eqref{eq:error&alpha>1} by
$\overline{\delta_v^\eps}$, integrate in space, and take the
imaginary part: Cauchy-Schwarz inequality yields
\begin{equation*}
  \frac{1}{2}\frac{d}{dt}\|\delta_v^\eps\|_{L^2}^2 \lesssim \sqrt\eps
  \||y| \delta_v^\eps\|_{L^2}\||y|^2 v\|_{L^2}.
\end{equation*}
For $u_0\in \Sigma^2$, Lemma~\ref{lem:(exp)_k} guarantees that
\begin{equation*}
  ||y|^2 v(t)\|_{L^2}\lesssim e^{Ct},\quad t\ge 0.
\end{equation*}
On the other hand, triangle inequality yields, for any $t\ge 0$,
\begin{equation*}
  \||y| \delta_v^\eps(t)\|_{L^2}\le \||y|v^\eps(t)\|_{L^2}
  +\||y|v(t)\|_{L^2}\lesssim e^{Ct}, 
\end{equation*}
where we have used \eqref{eq:controlveps} too. We infer
\begin{equation*}
  \frac{1}{2}\frac{d}{dt}\|\delta_v^\eps\|_{L^2}^2 \lesssim \sqrt\eps
  e^{Ct}, 
\end{equation*}
hence the announced error estimate.

\section{Critical case: single coherent state}
\label{sec:critical}

This section is dedicated to the proof of
Theorem~\ref{theo:single}. From now on, and in the next section as
well, we assume $\alpha=1$.

The approximate solution is 
\begin{equation*}
\psi_{\rm app}^{\varepsilon}(t, x)=\frac{1}{\varepsilon^{d / 4}} u\left(t, \frac{x-q(t)}{\sqrt{\varepsilon}}\right) e^{i(S(t)+p(t) \cdot(x-q(t))) / \varepsilon-i\lambda\frac{d}{2}t\log\varepsilon}.
\end{equation*}
Here, as before, $(q, p)$ is given by the Hamiltonian flow
\eqref{eq:hamilton}, the classical action is given by
\eqref{eq:action}, but now the envelope $u$ satisfies \eqref{eq:u}. 
\subsection{General case}
\label{sec:general}
In this subsection, we assume $u_0\in \Sigma^2$. 
The Cauchy problem for \eqref{eq:u} was addressed in
Lemma~\ref{lem:u}, complemented by Proposition~\ref{prop:Sigma2}.
With these preliminaries at hand, we can follow essentially the same
strategy as in Section~\ref{sec:subcritical}. We denote by $w^\eps =
\psi^\eps-\psi_{\rm app}^\eps$ the error. It solves
\begin{align*}
  i\eps\d_t w^\eps +\frac{\eps^2}{2}\Delta w^\eps
  &= V(x)w^\eps
    +\lambda \eps \(\psi^\eps\log|\psi^\eps|^2 -
    \psi_{\rm  app}^\eps\log|\psi_{\rm app}^\eps|^2 \)\\
  &\quad + \(V(x) - T^2_{q(t)}\)
  \psi_{\rm app}^\eps \quad;\quad w^\eps_{\mid t=0}=0,
\end{align*}
where $T^2_{q(t)}$ denote the Taylor expansion of $V$ at order two about
$q(t)$, 
\begin{equation*}
  T^2_{q(t)}(x)=V\(q(t)\)+\(x-q(t)\) \cdot \nabla
  V\(q(t)\)+\frac{1}{2}\left\langle x-q(t), \nabla^2 V\(x-q(t)\)
    \(x-q(t)\)\right\rangle.
\end{equation*}
Multiply the equation in $w^\eps$ by $\overline{w^\eps}$, integrate in
space, and take the imaginary part: using Lemma~\ref{lem:CH} to
estimate the logarithmic term, we find
\begin{equation*}
  \eps \frac{d}{dt}\|w^\eps\|_{L^2}^2 \lesssim \eps \|w^\eps\|_{L^2}^2
  +\int_{\R^d} \left| V(x) - T^2_{q(t)}\right|
|w^\eps(t,x)|  |\psi_{\rm app}^\eps(t,x)| dx.
\end{equation*}
Taylor formula yields the pointwise estimate
\begin{equation*}
  \left| V(x) - T^2_{q(t)}\right|\lesssim \left|x-q(t)\right|^3,
\end{equation*}
and we balance these three powers like in the previous subsection:
\begin{align*}
 \int_{\R^d} \left| V(x) - T^2_{q(t)}\right|
  |w^\eps(t,x)|
  &|\psi_{\rm app}^\eps(t,x)| dx\\
  &\lesssim \left\| |x-q(t)|
  w^\eps(t)\right\|_{L^2}  \left\| |x-q(t)|^2
  \psi_{\rm app}^\eps(t)\right\|_{L^2}  .
\end{align*}
Introduce the exact envelope $u^\eps$ defined by
\begin{equation*}
  \psi^\eps(t,x) = \frac{1}{\varepsilon^{d / 4}} u^\eps
  \left(t, \frac{x-q(t)}{\sqrt{\varepsilon}}\right)
  e^{i(S(t)+p(t) \cdot(x-q(t))) / \varepsilon-i\lambda\frac{d}{2}t\log\varepsilon}.
\end{equation*}
We first remark that $\|u^\eps(t)\|_{L^2}=
\|\psi^\eps(t)\|_{L^2}=\|u_0\|_{L^2}$ for all $t\in \R$. 
We readily check that the arguments presented in
Sections~\ref{sec:prelim} and \ref{sec:subcritical} show that there
exists $C>0$ independent of $\eps$ and $t$ such that, like in
Lemma~\ref{lem:u}, 
\begin{equation*}
  \|\nabla u^\eps(t)\|_{L^2} + \|y u^\eps(t)\|_{L^2} \lesssim
  e^{Ct},\quad t\ge 0.
\end{equation*}
In view of the triangle inequality and Lemma~\ref{lem:u}, this yields
\begin{equation*}
  \left\| |x-q(t)|
    w^\eps(t)\right\|_{L^2} \lesssim \sqrt\eps e^{Ct},\quad t\ge 0.
\end{equation*}
Invoking now Proposition~\ref{prop:Sigma2}, we find
\begin{equation*}
  \left\| |x-q(t)|^2
  \psi_{\rm app}^\eps(t)\right\|_{L^2}  \lesssim \eps e^{e^{Ct}},\quad
t \ge 0,
\end{equation*}
and thus
\begin{equation*}
  \eps \frac{d}{dt}\|w^\eps\|_{L^2}^2 \lesssim \eps \|w^\eps\|_{L^2}^2
  +\eps^{3/2} e^{e^{Ct}}.
\end{equation*}
The first part of Theorem~\ref{theo:single} then follows from
Gr\"onwall lemma. 

\subsection{Gaussian case}
\label{sec:gaussian}
The main remark is that the potential in \eqref{eq:u} being quadratic
in $y$, Gaussian initial data lead to a solution $u$ which is Gaussian
at all time. Solving the partial differential equation \eqref{eq:u}
thus reduces to solving ordinary differential equations that describe
the time-dependent coefficients of the Gaussian $u(t, \cdot)$. We
present computations in the one-dimensional case for
clarity. If $V$ decouples variables, we can use the tensorization
property to address higher dimension.  

\subsubsection{Approximate solution in the Gaussian case}

Suppose $d=1$, and
\begin{equation*}
  u_0(y) = b_0 e^{-a_0y^2/2}, \quad a_0,b_0\in \C, \ \RE a_0>0.
\end{equation*}
The solution to \eqref{eq:u} is sought under the form 
\begin{equation*}
u(t, y)=b(t) e^{-a(t)y^2 / 2}.
\end{equation*}
We compute
\begin{equation*}
\begin{aligned}
& i \partial_t u+\frac{1}{2} \d_y^2 u-\frac{1}{2} V^{\prime\prime}(q(t)) y^2u-\left.\lambda u \log |u|^2\right|_{u=b(t) e^{-a(t)y^2/2}} \\
= & \left(A_1(t)+A_2(t) y^2\right)e^{-a(t) y^2 / 2} .
\end{aligned}
\end{equation*}
where
\begin{equation*}
\begin{aligned}
&A_1(t)=i \dot{b}-\frac{1}{2} a b- \lambda b \log |b|^2,\\
&A_2(t)=-\frac{1}{2} i \dot{a} b+\frac{1}{2} a^2 b-\frac{1}{2}
  V^{\prime \prime} \(q(t)\)b+\lambda  b \RE a.
\end{aligned}
\end{equation*}
We thus cancel $A_1$ and $A_2$,
\begin{equation}\label{eq:ode.b}
    i \dot{b}=\frac{1}{2} a b+ \lambda b \log |b|^2, 
\end{equation}
\begin{equation}\label{eq:ode.a}
    i \dot{a}=a^2-V^{\prime \prime}\(q(t)\) +2\lambda \RE a.
\end{equation}
Equation~\eqref{eq:ode.b} is integrated like in \cite{CaGa18}. Multiplying both sides of \eqref{eq:ode.b} by $\bar b$, 
\begin{equation}\label{eq:ode.b.1}
i \dot{b} \bar{b}=\frac{1}{2} a|b|^2+\lambda|b|^2 \log|b|^2.
\end{equation}
Taking the imaginary part, we find
\begin{equation*}
    \frac{d}{d t}|b|^2=|b|^2 \IM a ,
\end{equation*}
hence
\begin{equation*}
|b(t)|^2 =\left|b_0\right|^2 \exp \left(\IM
  A(t)\right),\quad\text{where }
A(t):=\int_0^t a(s) d s. 
\end{equation*}
Plugging this expression into \eqref{eq:ode.b}, we have
\begin{equation*}
i \dot{b}=\frac{1}{2} a b+\lambda b \log \left|b_0\right|^2+\lambda b \IM A(t),
\end{equation*}
which we directly integrate,
\begin{equation}\label{eq:b}
b(t)=b_0 \exp \left(-i \lambda t\log \left|b_0\right|^2-\frac{i}{2} A(t)-i \lambda \IM \int_0^t A(s) d s\right).
\end{equation}
Like in \cite{CaGa18}, we seek $a$ under the form
\begin{equation}\label{eq:atau}
a(t)=\frac{\alpha_0}{\tau(t)^2}-i \frac{\dot{\tau}(t)}{\tau(t)},
\quad\text{with }
\RE a_0=\alpha_0.
\end{equation}
If $\beta_0=\IM a_0$, we check that \eqref{eq:ode.a} becomes
\begin{equation*}
  \ddot{\tau}=\frac{\alpha_0^2}{\tau^3}+2 \lambda \frac{\alpha_0}{\tau}
  -V^{\prime\prime}\(q(t)\)\tau\quad ; \quad \tau(0)=1, \quad
  \dot{\tau}(0)=-\beta_0 . 
\end{equation*}
Since $V''$ is bounded, the following lemma applies to this $\tau$:
\begin{lemma}\label{lem:bddoftau}
  Let $\lambda\in \R$ and $\Omega\in L^\infty(\R;\R)$. For any
  $\tau_0>0$ and $\tau_1\in 
  \R$, consider the ordinary differential equation
  \begin{equation}
  \label{eq:ODE}
  \ddot\tau = \frac{2\lambda}{\tau}+\frac{1}{\tau^3}
  -\Omega (t)\tau\quad;\quad \tau(0)=\tau_0,\quad \dot
  \tau(0)=\tau_1.
\end{equation}
It has a unique, global solution $\tau\in C^1(\R;\R)$, and it  obeys
the exponential bound 
  \begin{equation*}
    \exists C>0,\quad
    \frac{1}{\tau(t)^2}+\tau(t)^2+\dot\tau(t)^2\lesssim e^{Ct},\quad 
    \forall t\ge 0. 
  \end{equation*}
\end{lemma}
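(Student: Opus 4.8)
The plan is to establish global existence first and then the exponential bound via an energy-type functional. For the local existence, the right-hand side of \eqref{eq:ODE} is smooth in $\tau$ on the open set $\{\tau>0\}$ and continuous (indeed bounded) in $t$, so Cauchy--Lipschitz gives a unique maximal solution on some interval $(T_-,T_+)$ with $0<T_+\le+\infty$. The only way the solution can fail to be global forward in time is if $\tau(t)\to 0$ or $|\tau(t)|+|\dot\tau(t)|\to\infty$ in finite time; both possibilities will be excluded by the a priori bound, which is therefore the heart of the matter. Note $\tau$ cannot cross zero: as $\tau\to 0^+$ the term $1/\tau^3$ dominates and is positive, pushing $\tau$ back up, so $\tau(t)>0$ for all $t$ in the maximal interval.

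For the a priori bound I would introduce the natural ``energy''
\begin{equation*}
  \mathcal E(t) = \frac{1}{2}\dot\tau(t)^2 + W(\tau(t)),\qquad
  W(\tau) = -2\lambda\log\tau + \frac{1}{2\tau^2},
\end{equation*}
which is the mechanical energy for the autonomous part of \eqref{eq:ODE} (the potential whose derivative is $-2\lambda/\tau - 1/\tau^3$). Differentiating along the flow and using \eqref{eq:ODE},
\begin{equation*}
  \dot{\mathcal E}(t) = \dot\tau\ddot\tau + W'(\tau)\dot\tau
  = \dot\tau\Bigl(\ddot\tau - \frac{2\lambda}{\tau} - \frac{1}{\tau^3}\Bigr)
  = -\Omega(t)\,\tau(t)\,\dot\tau(t).
\end{equation*}
Since $|\Omega|\le\|\Omega\|_{L^\infty}$, this gives $|\dot{\mathcal E}(t)| \lesssim |\tau\dot\tau| \lesssim \tau^2 + \dot\tau^2$. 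The difficulty is that $W$ is not coercive: when $\lambda>0$ the term $-2\lambda\log\tau$ tends to $-\infty$ as $\tau\to+\infty$, so controlling $\mathcal E$ does not immediately control $\tau^2$. I would handle this by working with the auxiliary quantity $\mathcal F(t) = \mathcal E(t) + \tfrac12\tau(t)^2 + C_0$ for a constant $C_0$ large enough that $\mathcal F$ is bounded below by a positive constant and dominates both $\dot\tau^2$ and $\tau^2$ (using that $-2\lambda\log\tau + \tfrac12\tau^2 \ge -C$ and $\tfrac{1}{2\tau^2}\ge 0$). Then
\begin{equation*}
  \dot{\mathcal F}(t) = -\Omega(t)\tau\dot\tau + \tau\dot\tau
  \le \bigl(1+\|\Omega\|_{L^\infty}\bigr)\bigl(\tau^2+\dot\tau^2\bigr)
  \le C\,\mathcal F(t),
\end{equation*}
so Gr\"onwall's lemma yields $\mathcal F(t)\lesssim e^{Ct}$, hence $\tau(t)^2 + \dot\tau(t)^2 \lesssim e^{Ct}$.

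It remains to bound $1/\tau(t)^2$ from above, equivalently to show $\tau(t)$ does not become too small. For this I would use that $\mathcal E(t) = \mathcal F(t) - \tfrac12\tau^2 - C_0 \lesssim e^{Ct}$, and that $\mathcal E(t) \ge W(\tau(t)) \ge \tfrac{1}{2\tau(t)^2} - 2\lambda\log\tau(t) + (\text{possible sign issues})$. When $\lambda \le 0$ this is immediate since then $-2\lambda\log\tau \ge -2\lambda\log(1+\tau) \gtrsim -1$ near... more carefully: $-2\lambda\log\tau$ is bounded below on $(0,1]$ when $\lambda\le 0$ (it is $\ge 0$ there), so $\tfrac{1}{2\tau^2} \le \mathcal E(t) + C \lesssim e^{Ct}$, giving $1/\tau(t)^2 \lesssim e^{Ct}$. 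When $\lambda>0$, on the region $\tau\le 1$ we have $-2\lambda\log\tau\ge 0$, so again $\tfrac{1}{2\tau^2}\le\mathcal E(t)+C$; and on the region $\tau\ge 1$ the bound $1/\tau^2\le 1$ is trivial. Combining, $1/\tau(t)^2 \lesssim e^{Ct}$ in all cases. Together with the previous paragraph this gives the claimed estimate $1/\tau(t)^2 + \tau(t)^2 + \dot\tau(t)^2 \lesssim e^{Ct}$, and in particular $\tau$ stays in a compact subset of $(0,\infty)$ on finite time intervals with $\dot\tau$ bounded, so $T_+=+\infty$ (and by time-reversal $T_-=-\infty$), completing the proof. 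The main obstacle, as indicated, is the non-coercivity of the potential $W$ when $\lambda>0$, circumvented by adding $\tfrac12\tau^2$ to form $\mathcal F$; everything else is a routine Gr\"onwall argument.
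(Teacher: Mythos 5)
Your argument is essentially the paper's proof written in differential rather than integrated form: your energy identity $\dot{\mathcal E}=-\Omega\,\tau\dot\tau$ is exactly the paper's relation \eqref{eq:int} before integration, and your coercive functional $\mathcal F$ plays the role of the paper's quantity $y=\tau^2+\dot\tau^2$ combined with the logarithm-versus-power inequality \eqref{eq:eps}. The overall strategy (blow-up alternative plus a priori exponential bound via Gr\"onwall) is the same.

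Two points in your write-up need repair, and both are fixed by that same inequality $|\ln\tau|\le C_\eps+\eps\tau^2+\eps\tau^{-2}$. First, the claim that $\mathcal F$ \emph{dominates} $\tau^2+\dot\tau^2$ does not follow from the mere lower bound $-2\lambda\log\tau+\tfrac12\tau^2\ge -C$ that you invoke; you need the quantitative version with $\eps$ small, so that the logarithm is absorbed by $\tfrac12\tau^2+\tfrac{1}{2\tau^2}$ and, after enlarging $C_0$, one gets $\mathcal F\ge\tfrac12\dot\tau^2+\tfrac14\tau^2+\tfrac14\tau^{-2}$. Without this domination, both the closing of the Gr\"onwall inequality $\dot{\mathcal F}\le C(\tau^2+\dot\tau^2)\le C'\mathcal F$ and the final extraction of $\tau^2+\dot\tau^2\lesssim e^{Ct}$ from $\mathcal F\lesssim e^{Ct}$ are unjustified. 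Second, your case discussion for $1/\tau^2$ has the signs backwards: for $\lambda\le 0$ and $\tau\in(0,1]$ one has $-2\lambda\log\tau\le 0$ (and it tends to $-\infty$ as $\tau\to0^+$ when $\lambda<0$), not $\ge 0$; it is the case $\lambda>0$ that is immediate on $(0,1]$. For $\lambda<0$ you must again absorb $2|\lambda|\log(1/\tau)\le C_\eps+\eps\tau^{-2}$ into $\tfrac{1}{2\tau^2}$ to conclude $\tau^{-2}\lesssim \mathcal E+C\lesssim e^{Ct}$. With these two corrections the proof is complete and coincides in substance with the paper's.
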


\begin{proof}
 Local existence follows from the Cauchy-Lipschitz Theorem, and by
  continuity, $\tau>0$ at least near $t=0$. By
  symmetry, we now focus on positive time, $t>0$. Since the
 right hand side of \eqref{eq:ODE}  is smooth away from the origin,
 either the solution is global, or there exists $T>0$ such that
 \begin{equation*}
   \frac{1}{\tau(t)}+\tau(t)+|\dot\tau(t)| \Tend t T +\infty. 
 \end{equation*}
(The presence of $\dot \tau$ follows when turning \eqref{eq:ODE} into
a first order equation, to apply the Cauchy-Lipschitz Theorem.)
We first show that as long as $\tau$ is bounded, so are $1/\tau$ and
$|\dot \tau|$. We
then prove the a priori bound on $\tau$ stated in the lemma, which implies that the
solution is global.  
\smallbreak

Multiply \eqref{eq:ODE} by $\dot \tau$, and integrate between $0$ and $t$:
\begin{equation}
  \label{eq:int}
  \dot \tau(t)^2 = \underbrace{\tau_1^2+4\lambda \ln \tau_0
    +\frac{1}{\tau_0^2}}_{=:C_0} 
  +4\lambda \ln\tau(t)-\frac{1}{\tau(t)^2}-2\int_0^t \Omega(s)\tau(s)\dot\tau(s)ds.
\end{equation}
For any $\eps>0$ there exists $C_\eps>0$ such that
\begin{equation}\label{eq:eps}
  |\ln \tau|\le C_\eps + \frac{\eps}{\tau^2}+\eps\tau^2,\quad \forall
  \tau>0. 
\end{equation}
For $t$ bounded and $\eta>0$, estimate the integral by
\begin{align*}
  2\left| \int_0^t \Omega(s)\tau(s)\dot\tau(s)ds\right| &\le
  2t\|\Omega\|_{L^\infty}\sup_{s\in [0,t]} |\tau(s)||\dot \tau(s)|\\
&\le t\|\Omega\|_{L^\infty}\sup_{s\in [0,t]}
   \(\frac{1}{\eta}|\tau(s)|^2+\eta|\dot \tau(s)|^2\)\\
&\le t\frac{\|\Omega\|_{L^\infty}}{\eta}\sup_{s\in [0,t]}
  |\tau(s)|^2+t\eta\|\Omega\|_{L^\infty}\sup_{s\in [0,t]}|\dot \tau(s)|^2,
\end{align*}
where we have used Young inequality. Taking $\eps$ and $\eta$
sufficiently small, we infer
\begin{equation}\label{eq:1/tau}
  \sup_{s\in [0,t]}\(\dot\tau(s)^2+\frac{1}{\tau(s)^2}\)
  \le C(t)\( 1 + \sup_{s\in [0,t]}  |\tau(s)|^2\).
\end{equation}
In particular, if $\tau$ is bounded on $[0,T]$, so is $1/\tau$. 
Introduce
\begin{equation*}
  y(t) = \tau(t)^2+\dot \tau(t)^2\ge 0.
\end{equation*}
We compute 
\begin{equation*}
  \dot y = 2\tau\dot\tau+2\dot\tau\ddot\tau=2\tau\dot\tau
  +2\lambda\frac{\dot\tau}{\tau} + \frac{\dot\tau}{\tau^3}- \Omega\tau\dot\tau. 
\end{equation*}
Integrating between $0$ and $t$, and using Young inequality (for the
term $\int_0^t \tau\dot\tau$), 
\begin{equation*}
  y(t) \le C + \int_0^t y(s)ds + 2\lambda
  \ln\tau(t)-\frac{1}{2\tau(t)^2}-\int_0^t \Omega(s)\tau(s)\dot\tau(s)ds. 
\end{equation*}
Using \eqref{eq:eps} again, and Young inequality in the last integral,
\begin{equation*}
   y(t) \le C + \int_0^t y(s)ds + 2|\lambda|\( C_\eta +
   \frac{\eta}{\tau(t)^2}+\eta\tau(t)^2
   \)-\frac{1}{2\tau(t)^2}+\|\Omega\|_{L^\infty}\int_0^t 
  y(s)ds.  
\end{equation*}
For $2|\lambda|\eta< 1/2$, the factor of $1/\tau^2$ on the right
hand side is negative, and  the term
$2|\lambda|\eta \tau^2$ is absorbed by the left hand side, so we get
\begin{equation*}
  y(t) \lesssim 1 + \int_0^t y(s)ds.
\end{equation*}
In view of the above discussion, Gr\"onwall lemma then yields global
existence and the exponential bound for $\tau$ and $\dot \tau$. The
exponential bound for $1/\tau$ then follows from 
\eqref{eq:int} and \eqref{eq:eps}.
\end{proof}
We infer the analogue of Lemma~\ref{lem:(exp)_k} in the nonlinear case
for Gaussian data:
\begin{lemma}\label{lem:gaussianExpk}
  Let $d\ge 1$. Suppose that $V$ decouples variables,
  \begin{equation*}
    V(x) = \sum_{j=1}^d V_j(x_j),
  \end{equation*}
and that $u_0$ is a Gaussian of the form
  \begin{equation*}
    u_0(y) = b_0 \exp \(-\frac{1}{2}\sum_{j=1}^da_{0j}y_j^2\), \quad
    a_{0j},b_0\in \C,\ \RE a_{0j}>0. 
  \end{equation*}
 For any  $k\ge
  1$,  there exists $C_k>0$ such that the solution $u\in L^\infty_{\rm
    loc}(\R;\Sigma^k)$ to \eqref{eq:u} satisfies
  $\|u(t)\|_{L^2}=\|u_0\|_{L^2}$ and, for all $\beta
  \in \N^d$ with $1\le |\beta|\le k$, 
  \begin{equation*}
    \|\d_y^\beta u(t)\|_{L^2(\R^d)}+ \|y^\beta
    u(t)\|_{L^2(\R^d)}\lesssim e^{C_kt},\quad t\ge 0. 
  \end{equation*}
\end{lemma}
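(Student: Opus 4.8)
The plan is to avoid the general regularity machinery (Lemma~\ref{lem:u}, Proposition~\ref{prop:Sigma2}) altogether and use the explicit Gaussian form of the solution: everything reduces to the scalar ODEs \eqref{eq:ode.b}--\eqref{eq:ode.a}, to the two-sided exponential bound on $\tau$ from Lemma~\ref{lem:bddoftau}, and to elementary Gaussian integrals, which handle all orders $k$ simultaneously. First I would reduce to $d=1$. Since $V(x)=\sum_j V_j(x_j)$, the matrix $\nabla^2 V(q(t))$ is diagonal with entries $V_j''(q_j(t))$, and the Hamiltonian flow \eqref{eq:hamilton} splits into $d$ independent one-dimensional flows $(\dot q_j,\dot p_j)=(p_j,-V_j'(q_j))$; hence \eqref{eq:u} becomes $i\partial_t u+\tfrac12\Delta u=\tfrac12\sum_j V_j''(q_j(t))y_j^2\,u+\lambda u\log|u|^2$, a logarithmic Schr\"odinger equation with a decoupling (time-dependent, quadratic) potential. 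As $u_0$ is a product of one-dimensional Gaussians, the tensorization property recalled in the introduction — which applies equally to this decoupling potential, the relevant computation being unaffected by time-dependence, with uniqueness ensured by Lemma~\ref{lem:u} — gives $u(t,x)=\prod_{j=1}^d u_j(t,x_j)$, each $u_j$ solving the one-dimensional version of \eqref{eq:u} with potential $\tfrac12 V_j''(q_j(t))y_j^2$ and datum $u_{0j}$. Then $\|\partial_y^\beta u(t)\|_{L^2(\R^d)}=\prod_j\|\partial_{y_j}^{\beta_j}u_j(t)\|_{L^2(\R)}$, similarly for $\|y^\beta u(t)\|_{L^2(\R^d)}$, the factors with $\beta_j=0$ being constant in time (Lemma~\ref{lem:u}); since $|\beta|\le k$ forces $\beta_j\le k$, it suffices to prove the one-dimensional estimate $\|\partial_y^m u(t)\|_{L^2}+\|y^m u(t)\|_{L^2}\lesssim e^{C_mt}$ for $u(t,y)=b(t)e^{-a(t)y^2/2}$ and every integer $m\ge 0$.

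Next I would obtain two-sided control of $a(t)$ and $b(t)$. Writing $a(t)=\alpha_0/\tau(t)^2-i\dot\tau(t)/\tau(t)$ as in \eqref{eq:atau} with $\alpha_0=\RE a_0>0$, and rescaling $\tau=\sqrt{\alpha_0}\,\eta$, one checks that $\eta$ solves \eqref{eq:ODE} with $\Omega(t)=V''(q(t))\in L^\infty(\R)$ (Assumption~\ref{hyp:V} and Lemma~\ref{lem:hamilton}) and suitable initial data. Lemma~\ref{lem:bddoftau} then yields $\tau(t)^2+\tau(t)^{-2}+\dot\tau(t)^2\lesssim e^{Ct}$, hence $e^{-Ct}\lesssim\RE a(t)=\alpha_0\tau(t)^{-2}\lesssim e^{Ct}$ and $|a(t)|\le\alpha_0\tau(t)^{-2}+\tfrac12(\dot\tau(t)^2+\tau(t)^{-2})\lesssim e^{Ct}$. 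For $b$, rather than the explicit formula \eqref{eq:b}, I would simply invoke mass conservation (Lemma~\ref{lem:u}): $\|u_0\|_{L^2}^2=\|u(t)\|_{L^2}^2=|b(t)|^2\sqrt{\pi/\RE a(t)}$, so $|b(t)|^2=\|u_0\|_{L^2}^2\sqrt{\RE a(t)/\pi}$, whence $e^{-Ct}\lesssim|b(t)|\lesssim e^{Ct}$.

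Then I would conclude by Gaussian integrals. For any $m\ge 0$, $\|y^m u(t)\|_{L^2}^2=c_m|b(t)|^2(\RE a(t))^{-m-1/2}\lesssim e^{C_mt}$ by the previous step. For derivatives, $\partial_y^m(e^{-a(t)y^2/2})=e^{-a(t)y^2/2}R_m(t,y)$ with $R_m(t,\cdot)$ a polynomial of degree $\le m$ whose coefficients are $O((1+|a(t)|)^m)$; expanding and using $\|\,|y|^{j'}e^{-a(t)y^2/2}\|_{L^2}^2=c_{j'}(\RE a(t))^{-j'-1/2}$ for $0\le j'\le m$, one gets $\|\partial_y^m u(t)\|_{L^2}^2\lesssim|b(t)|^2(1+|a(t)|)^{2m}(1+(\RE a(t))^{-m-1/2})\lesssim e^{C_mt}$. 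Combining with the reduction step finishes the proof, with $C_k$ depending only on $k$ (automatic, since each $\beta_j\le|\beta|\le k$).

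I do not expect a deep difficulty here: the one essential input is the \emph{lower} bound $\tau(t)^{-2}\gtrsim e^{-Ct}$, i.e.\ that $\RE a(t)$ — the inverse width of the Gaussian — does not collapse to $0$ faster than exponentially, which is exactly the point of the two-sided estimate in Lemma~\ref{lem:bddoftau}; without it the moments $\|y^m u(t)\|_{L^2}$, scaling like $(\RE a(t))^{-(m+1/2)/2}$, could grow super-exponentially. The rest is bookkeeping: the rescaling matching \eqref{eq:ODE}, and the tensorization of the one-dimensional estimates.
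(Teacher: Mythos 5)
Your proposal is correct and takes essentially the same route as the paper: the Gaussian ansatz reducing \eqref{eq:u} to the ODEs \eqref{eq:ode.b}--\eqref{eq:ode.a}, the exponential two-sided control of $\tau$ (hence of $\RE a$ and $|a|$) from Lemma~\ref{lem:bddoftau}, and explicit Gaussian moment computations. The only (harmless) variants are that you reduce to $d=1$ by tensorization where the paper works with a $d$-dimensional product Gaussian ansatz directly, you bound $|b(t)|$ via mass conservation rather than by integrating the ODE for $|b|^2$, and you handle $\|\d_y^\beta u\|_{L^2}$ by differentiating the Gaussian explicitly where the paper invokes the Fourier transform; you also make explicit the rescaling of $\tau$ needed to put its equation in the exact form \eqref{eq:ODE}, which the paper leaves implicit.
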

\begin{proof}
  First, we note that the above one-dimensional computation is readily
  adapted, by seeking $u(t,y)$ under the form
  \begin{equation*}
    u(t,y) = b(t) \exp \(-\frac{1}{2}\sum_{j=1}^da_{j}(t)y_j^2\).
  \end{equation*}
  Equation~\eqref{eq:ode.b} becomes
  \begin{equation*}
   i \dot{b}=\frac{1}{2} \sum_{j=1}^da_j b+ \lambda b \log |b|^2,  
 \end{equation*}
 and \eqref{eq:ode.a} becomes a family of $d$ uncoupled equations,
 since $\nabla^2V$ is diagonal,
 \begin{equation*}
    i \dot{a_j}=a_j^2-V_j''\(q_j(t)\) +2\lambda \RE a_j,
  \end{equation*}
so we can use directly the one-dimensional analysis. For $\beta\in
\N^d$, we readily compute
\begin{equation*}
  \|y^\beta u(t) \|_{L^2} = |b(t)| \prod_{j=1}^d \|y_j^{\beta_j}
  e^{-a_j(t)y_j^2/2}\|_{L^2} = |b(t)|\prod_{j=1}^d \(\RE
  a_j(t)\)^{-\beta_j/2-1/4} \mu(\beta_j),
\end{equation*}
where the coefficients $\mu(\beta_j)$ stand for the momenta of the
Gaussian in $L^2$,
\begin{equation*}
  \mu(\beta) =\( \int_{\R} z^{2\beta} e^{-z^2}dz\)^{1/2},
\end{equation*}
whose precise value is irrelevant here. As
\begin{align*}
  |b(t)|^2 &= |b_0|^2\exp \(\sum_{j=1}^d \int_0^t \IM a_j(s)ds\)\\
  &=
 |b_0|^2\exp \(-\sum_{j=1}^d \int_0^t \frac{\dot
   \tau_j(s)}{\tau_j(s)}ds\)
 = |b_0|^2\prod_{j=1}^d\tau_j(t)^{-1/2},
\end{align*}
with notations inherited from the one-dimensional case, we have
\begin{equation*}
    \|y^\beta u(t) \|_{L^2} \lesssim \prod_{j=1}^d\tau_j(t)^{-1/2}\(\RE
  a_j(t)\)^{-\beta_j/2-1/4}  \lesssim
  \prod_{j=1}^d\tau_j(t)^{-1/2+\beta_j+1/2}.  
\end{equation*}
Lemma~\ref{lem:bddoftau} yields $0<\tau_j(t)\lesssim e^{c_j t}$ for
some $c_j>0$, hence
the announced bound for the momenta. The bound for the derivative
proceeds similarly, by considering the Fourier transform of
$u(t,\cdot)$. 
\end{proof}
\subsubsection{Error estimate in the Gaussian case}
We can now resume the notations and computations from
Section~\ref{sec:general}. The energy estimate for $w^\eps$ implies
\begin{equation*}
  \|w^\eps(t)\|_{L^2}\lesssim \frac{1}{\eps}\int_0^t e^{C(t-s)} \left\| |x-q(t)|^3
    \psi_{\rm app}^\eps(s)\right\|_{L^2}ds\lesssim \sqrt\eps \int_0^t
  e^{C(t-s)}  \left\| |y|^3  u(s)\right\|_{L^2}ds,
\end{equation*}
so Lemma~\ref{lem:gaussianExpk} completes the proof of
Theorem~\ref{theo:single}.

\section{Superposition in the Gaussian case}
\label{sec:superp}

In this section, we prove Theorem~\ref{theo:superp}. A similar result
was proven in \cite{CaFe11} in the case of a smooth power nonlinearity
($\eps^{\alpha_c}|\psi^\eps|^{2\si}\psi^\eps$, with $\si$ an integer), and
we resume the same strategy here. The major difference however lies in
the estimate of interaction terms. Typically, if $\psi^\eps_{1,\rm
  app}$ and $\psi^\eps_{2,\rm
  app}$ have difference centers in physical space, $q_1\not = q_2$,
the error term
\begin{equation*}
  | \psi^\eps_{1,\rm  app}+\psi^\eps_{2,\rm  app}|^2\(
  \psi^\eps_{1,\rm  app}+\psi^\eps_{2,\rm  app}\) -  
| \psi^\eps_{1,\rm  app}|^2
  \psi^\eps_{1,\rm  app}
- | \psi^\eps_{2,\rm  app}|^2\psi^\eps_{2,\rm  app}
\end{equation*}
involves only cross terms, where the factor $\psi^\eps_{1,\rm
  app}\times\psi^\eps_{2,\rm  app}$ (leaving out the conjugacy to
simplify the discussion) is present: since $\psi^\eps_{j,\rm  app}$ is
centered in $q_j$ at scale $\sqrt\eps$, these cross terms can be shown
to be small. In the case of a logarithmic nonlinearity, we cannot use
the same sort of expansion: instead, we rely on a lemma proven by
Ferriere in \cite{Ferriere2020}, which measures precisely the analogue of this
decomposition, \emph{in the case of Gaussian functions} (see
Lemma~\ref{lem:separation ineq}
below).

We examine the superposition principle in \eqref{eq:logNLS}, and
distinguish two regions. When the
trajectories (in physical space) of the two centers intersect, or are
close to each other, we keep the nature of the nonlinear interaction
as a black box, and use the fact  such an event is sufficiently
rare for the nonlinear interaction to be negligible. This step is an
adaptation of computations from \cite{CaFe11} to the case of a
logarithmic nonlinearity. Conversely, when the centers of the two
coherent states are far from each other, we apply the method 
of Ferriere \cite{Ferriere2020}, which shows the absence of
interaction at leading order. 

\subsection{Preparation of the proof}
\label{sec:prep}
As announced above, the scheme of the proof is the same as in
\cite{CaFe11}, and we resume the same notations as much as possible. 
Under the assumptions of Theorem~\ref{theo:superp}, denote $w^{\varepsilon}=\psi^{\varepsilon}-\psi_{1,\rm app}^\eps
-\psi_{2,\rm app}^\eps$ the error function. It solves
\begin{equation}\label{eq:the error of two centre}
    i \varepsilon \partial_t w^{\varepsilon}+\frac{\varepsilon^2}{2}
    \Delta w^{\varepsilon}=V(x) w^{\varepsilon}+L^{\varepsilon}+
    \mathcal{N}^{\varepsilon}\quad ; \quad w_{\mid
      t=0}^{\varepsilon}=0, 
\end{equation}
where we have now
\begin{equation*}
  L^{\varepsilon}(t, x)=\left(V(x)-T_{q_1(t)}^2(x)\right) \psi_{1,\rm app}^{\varepsilon}(t, x)+\left(V(x)-T_{q_2(t)}^2(x)\right) \psi_{2,\rm app}^\eps(t, x),
\end{equation*}
and, like in Section~\ref{sec:critical}, $T_z^2$ stands for the Taylor
second order polynomial of $V$ about $z$, 
\begin{equation*}
  T^2_{z}(x)=V(z)+(x-z) \cdot \nabla V(z)+
  \frac{1}{2}\left\langle x-z, \nabla^2 V(z)(x-z)\right\rangle.
\end{equation*}
The approximate functions associated with each Gaussian are given by
\begin{equation*}
  \psi_{j,\rm app}^\eps(t,x) = \frac{1}{\varepsilon^{d / 4}}
  u_j\left(t, \frac{x-q_j(t)}{\sqrt{\varepsilon}}\right) e^{i(S_j(t)+p_j(t)
    \cdot(x-q_j(t))) /
    \varepsilon-i\lambda\frac{d}{2}t\log\varepsilon},
\end{equation*}
where $q_j$, $p_j$ and $S_j$ are defined by \eqref{eq:hamilton} (with
initial data $q_{0j}$ and $p_{0j}$) and \eqref{eq:action}, while $u_j$
solves \eqref{eq:u} with initial datum $u_{0j}$. 
Denote by $g$ the function
\begin{equation*}
  g(z) = \lambda z\log|z|^2, \quad z\in \C.
\end{equation*}
The nonlinear source term is given by
\begin{equation*}
  \mathcal{N}^{\varepsilon}
  =\varepsilon\(g\(w^{\varepsilon}+\psi_{1,\rm
    app}^\eps+\psi_{2,\rm app}^\eps\) -g\(\psi_{1,\rm app}^\eps\)-
  g\(\psi_{2,\rm app}^\eps\)\).
\end{equation*}
We decompose $\mathcal{N}^{\varepsilon}$ as the sum of a semilinear
term and an interaction source term:
$\mathcal{N}^{\varepsilon}=\mathcal{N}_S^{\varepsilon}+\mathcal{N}_I^{\varepsilon}$,
where 
\begin{equation*}
\begin{aligned}
& \mathcal{N}_S^{\varepsilon}=\varepsilon\left(g(w^{\varepsilon}+\psi_{1,\rm app}^\eps+\psi_{2,\rm app}^\eps)-g(\psi_{1,\rm app}^\eps+\psi_{2,\rm app}^\eps)\right), \\
& \mathcal{N}_I^{\varepsilon}=\varepsilon\left(g(\psi_{1,\rm app}^\eps+\psi_{2,\rm app}^\eps)-g\(\psi_{1,\rm app}^\eps\)-g\(\psi_{2,\rm app}^\eps\)\right).
\end{aligned}
\end{equation*}
Multiply \eqref{eq:the error of two centre} by $\overline {w^\varepsilon}$,
integrate over $\mathbb{R}^d$, and take the imaginary part: we get
\begin{equation*}
\frac{\eps}{2} \frac{d}{d t}\|w^\varepsilon\|_{L^2}^2
\le\|w^\varepsilon\|_{L^2}
\left(\|L^\varepsilon\|_{L^2}
  +\left\|\mathcal{N}_I^{\varepsilon}\right\|_{L^2}\right)+\operatorname{Im}
\int_{\R^d} \overline{w^\eps}\mathcal{N}_S^{\varepsilon}. 
\end{equation*}
The linear
source term $L^{\varepsilon}$ is handled like in the case of a single
coherent state (Section~\ref{sec:critical}):
\begin{equation*}
  \|L^\varepsilon(t)\|_{L^2}\lesssim \sum_{j=1,2}
  \left\||x-q_j(t)|^3\psi_{j,\rm app}^\eps(t)\right\|_{L^2} = \eps^{3/2} \sum_{j=1,2}
  \left\||y|^3u_j(t)\right\|_{L^2}
\end{equation*}
In view of Lemma~\ref{lem:gaussianExpk}, we infer
\begin{equation*}
    \|L^\varepsilon(t)\|_{L^2}\lesssim \eps^{3/2} e^{Ct},\quad t\ge 0.
\end{equation*}
Also, in view of Lemma~\ref{lem:CH},
\begin{equation*}
  \operatorname{Im}
\int_{\R^d} \overline{w^\eps} \mathcal{N}_S^{\varepsilon}\le
2|\lambda| \|w^\eps\|_{L^2}^2,
\end{equation*}
so this term will be handled by Gr\"onwall lemma. 
Therefore the core of the proof lies in the control of
$N_I^\varepsilon$. Like in \cite{CaFe11}, we separate the time
interval that we consider into two parts: when the trajectories $q_1$
an $q_2$ are close to each other, and when they are not, each case
being treated with different arguments.

\subsection{Crossing of coherent states}

The analysis of the set where trajectories are close to each other was
performed in \cite{CaFe11}:
\begin{proposition}[Lemma~ 6.2 and Proposition~6.3 in
  \cite{CaFe11}]\label{prop:crossing} 
 For $j=1,2$, let $(q_j,p_j)$ be the
 solutions to \eqref{eq:hamilton} associated with initial data
 $(q_{01},p_{01})\not = (q_{02},p_{02})$. Let $0<\gamma<1/2$. For
 $T>0$, set 
    \begin{equation*}
     I^{\varepsilon}(T)=\left\{t \in[0, T],\left|q_1(t)-q_2(t)\right| \le \varepsilon^\gamma\right\}. 
    \end{equation*}
$\bullet$ If $T$ is  independent of
    $\varepsilon$, we have
\begin{equation*}
    \left|I^{\varepsilon}(T)\right|=\mathcal{O}\left(\varepsilon^\gamma\right).
\end{equation*}
$\bullet$ If in addition $d=1$ and $E_1\not =E_2$, where
\begin{equation*}
  E_j = \frac{p_{0j}^2}{2}+V\(q_{0j}\),
\end{equation*}
then there exist $C,C_0>0$ independent of
$\eps\in (0,1)$ such that
\begin{equation*}
   |I_\eps(t)|\lesssim \eps^\gamma e^{C_0t} |E_1-E_2|^{-2},\quad
  0\le t\le C\log\frac{1}{\eps}.
\end{equation*}
\end{proposition}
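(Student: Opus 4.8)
The plan is to work entirely at the level of the classical flow \eqref{eq:hamilton}, making no reference to the Schrödinger dynamics. Write $\delta q(t)=q_1(t)-q_2(t)$, $\delta p(t)=p_1(t)-p_2(t)$, $g(t)=|\delta q(t)|^2+|\delta p(t)|^2$ and $h(t)=|\delta q(t)|^2$. The first observation is that, since \eqref{eq:hamilton} has a smooth right-hand side, its time-$t$ flow is a bijection, so $(q_{01},p_{01})\neq(q_{02},p_{02})$ forces $g(t)>0$ for every $t$. Differentiating $g$ and using $\dot{\delta q}=\delta p$, $\dot{\delta p}=-(\nabla V(q_1)-\nabla V(q_2))$ together with $\|\nabla^2 V\|_{L^\infty}<\infty$ (Assumption~\ref{hyp:V}) and Young's inequality, I would get $|\dot g|\lesssim g$, hence $g(0)e^{-Ct}\le g(t)\le g(0)e^{Ct}$; in particular $g$ is bounded below by a constant $\kappa_T>0$ on any fixed $[0,T]$.

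For the first bullet I would set $A=I^\varepsilon(T)=\{t\in[0,T]:h(t)\le\varepsilon^{2\gamma}\}$ and note that on $A$ one has $|\delta p|^2=g-h\ge\kappa_T/2$ for $\varepsilon$ small, whence, from $\ddot h=2|\delta p|^2-2\,\delta q\cdot(\nabla V(q_1)-\nabla V(q_2))\ge 2|\delta p|^2-2\|\nabla^2 V\|_{L^\infty}h$, the uniform convexity bound $\ddot h\ge\kappa_T/2$ on $A$. On each connected component of $A$ the function $h$ is then strictly convex and bounded by $\varepsilon^{2\gamma}$, so comparing it with the parabola through its minimum (and using $h\ge0$) shows that each component has length $\lesssim_T\varepsilon^{\gamma}$. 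To conclude one must bound the number of components of $A$: two consecutive components are separated by a local maximum of $h$, i.e.\ by a zero of $\tfrac{d}{dt}|\delta q|^2$; using that $\delta q$ solves a second-order linear equation $\ddot{\delta q}=-M(t)\delta q$ with $M(t)=\int_0^1\nabla^2 V(q_2(t)+\theta\,\delta q(t))\,d\theta$ bounded in $L^\infty$ (together with the nonvanishing of $(\delta q,\delta p)$), a comparison / winding argument bounds the number of such events on $[0,T]$ in terms of $T$ only, and $|I^\varepsilon(T)|\lesssim_T\varepsilon^{\gamma}$ follows.

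For the second bullet I would add the conservation of the classical energy $E_j=\tfrac12 p_j(t)^2+V(q_j(t))$ along \eqref{eq:hamilton}. Subtracting, $E_1-E_2=\tfrac12(p_1-p_2)(p_1+p_2)+(V(q_1)-V(q_2))$ at every time; on $I^\varepsilon(t)$, where $|q_1-q_2|\le\varepsilon^{\gamma}$, Lemma~\ref{lem:hamilton} gives $|q_j(s)|+|p_j(s)|\lesssim e^{C_0 s}$, and the at most quadratic growth of $V$ gives $|V'(q_j(s))|\lesssim e^{C_0 s}$, so that $|V(q_1(s))-V(q_2(s))|\lesssim e^{C_0 t}\varepsilon^{\gamma}$ and $|p_1(s)+p_2(s)|\lesssim e^{C_0 t}$, hence
\begin{equation*}
  |p_1(s)-p_2(s)|\gtrsim e^{-C_0 t}\bigl(|E_1-E_2|-Ce^{C_0 t}\varepsilon^{\gamma}\bigr)\gtrsim e^{-C_0 t}|E_1-E_2|,\qquad s\in I^\varepsilon(t),
\end{equation*}
the last bound being valid for $0\le t\le c\log\frac1\varepsilon$ with $c$ small enough that $e^{C_0 t}\varepsilon^{\gamma}\ll|E_1-E_2|$. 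In dimension one, $\dot{\delta q}=\delta p=p_1-p_2$, so on each connected component of $I^\varepsilon(t)$ the scalar $\delta q$ is strictly monotone with speed $\gtrsim e^{-C_0 t}|E_1-E_2|$ and takes values in $[-\varepsilon^{\gamma},\varepsilon^{\gamma}]$; hence each component has length $\lesssim\varepsilon^{\gamma}e^{C_0 t}|E_1-E_2|^{-1}$, and combining this with the bound on the number of components (again via the zeros of $\delta p$ on $[0,t]$, which by the same Sturm-type argument is $\lesssim 1+t$) and absorbing the remaining polynomial and $|E_1-E_2|$ factors into the exponential yields $|I^\varepsilon(t)|\lesssim\varepsilon^{\gamma}e^{C_0 t}|E_1-E_2|^{-2}$.

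I expect the main obstacle to be precisely the control of the number of connected components of $I^\varepsilon(T)$ — equivalently, of how many times the two classical trajectories approach and separate — which is what forces the use of the linear equation satisfied by $\delta q$ and the boundedness of the higher derivatives of $V$, through a comparison/winding argument; everything else reduces to Grönwall's lemma, energy conservation, and the elementary sublevel-set estimate for uniformly convex functions, together with some bookkeeping of the exponential constants to obtain the stated powers.
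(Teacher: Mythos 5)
The paper does not prove this proposition: it is imported verbatim from \cite{CaFe11} (Lemma~6.2 and Proposition~6.3 there), so I am comparing your argument with the one underlying that reference. Your overall strategy is the right one and coincides with it in structure: the lower bound $|\delta q|^2+|\delta p|^2\ge g(0)e^{-Ct}$ via Gr\"onwall, the uniform convexity $\ddot h\ge \kappa_T/2$ of $h=|\delta q|^2$ on the sublevel set (which bounds each connected component by $O(\eps^{\gamma})$), and, for the second bullet, energy conservation giving $|\delta p|\gtrsim e^{-C_0t}|E_1-E_2|$ on $I^\eps(t)$ for $t\le c\log\frac1\eps$, hence monotonicity of the scalar $\delta q$ on each component. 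All of these steps are correct, and the discrepancy between your final power $|E_1-E_2|^{-1}$ and the stated $|E_1-E_2|^{-2}$ is harmless since $E_1,E_2$ are fixed constants.

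The one genuine weak point is exactly the one you flag: the count of connected components. The tool you name does not do the job as stated. Sturm comparison controls the zeros of $\delta q$ for the scalar equation $\ddot{\delta q}=-M(t)\delta q$, not the zeros of $\dot h=2\,\delta q\cdot\delta p$; passing from one to the other via Rolle requires $M$ to be nonvanishing, which Assumption~\ref{hyp:V} does not give, and in dimension $d\ge 2$ (needed for the first bullet) there is no winding argument for the vector system at all. The fix is elementary and dimension-free: in each gap between consecutive components, $h$ attains an interior local maximum, where $\ddot h\le 0$ forces $|\delta p|^2\le \|\nabla^2V\|_{L^\infty}\,h$; combined with $h+|\delta p|^2\ge g(0)e^{-Ct}$ this shows $h\ge c\,e^{-Ct}$ at that point, with $c>0$ independent of $\eps$. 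Since $|\dot h|=2|\delta q\cdot\delta p|\lesssim e^{Ct}$, climbing from $\eps^{2\gamma}$ to $c\,e^{-Ct}$ takes a time bounded below by a positive constant on $[0,T]$ (respectively by $c'e^{-2Ct}$ for $t\le c''\log\frac1\eps$, which is where the logarithmic time restriction enters). Hence the number of components is $O_T(1)$ in the first case and $O(e^{Ct})$ in the second, which is absorbed into $e^{C_0t}$, completing both bullets. With this replacement your proof is complete.
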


In the region $I^\eps$, nonlinear interaction may occur; however,
Proposition~\ref{prop:crossing} implies that the effects of such
interaction remain negligible at leading order. Indeed, we argue like
in Section~\ref{sec:subcritical}, and write,  for
$0<\eta<\min\(1,\frac{4}{d+2}\)$, using \eqref{ineq:log
  to lp},
\begin{equation*}
    \|\mathcal{N}_I^{\varepsilon}\|_{L^2(\mathbb{R}^2)}^2\lesssim
    \varepsilon^2\sum_{j=1,2}\left(\|\psi_{j,\rm
        app}^\varepsilon\|_{L^{2+\eta}}^{2+\eta}+
\|\psi_{j,\rm app}^\varepsilon\|_{L^{2-\eta}}^{2-\eta}\right).  
\end{equation*}
Like in Section~\ref{sec:subcritical}, and now referring to
Lemma~\ref{lem:gaussianExpk} in the nonlinear case for Gaussian
profiles, this implies
\begin{equation*}
    \|\mathcal{N}_I^{\varepsilon}(t)\|_{L^2(\mathbb{R}^2)}\lesssim
    \eps^{1-\frac{d\eta}{8}}e^{Ct},\quad t\ge 0.  
\end{equation*}
Proposition~\ref{prop:crossing} then implies
\begin{equation}\label{eq:source-crossing}
\int_{I^{\varepsilon}(T)}\left\|\mathcal{N}_I^{\varepsilon}(t)\right\|_{L^2}
d t \lesssim  \int_{I^\varepsilon(T)}\eps^{1-\frac{d\eta}{8}}e^{Ct}dt
\lesssim\eps^{\gamma+1-\frac{d\eta}{8}}e^{CT}.
\end{equation}
We emphasize the fact that in view of Proposition~\ref{prop:crossing},
the above estimate is valid in general for $T>0$ independent of
$\eps\in (0,1)$, and, when $d=1$, for $T\le C\log\frac{1}{\eps}$ for
some $C>0$ independent of $\eps$. 
\subsection{Separation of coherent states}

The  complement of the set
$I^\varepsilon(T)$  is the set of times is when the trajectories of
two coherent states are sufficiently far 
apart.  We recall a crucial result from
\cite{Ferriere2020}, which shows some orthogonality phenomenon in the
case of Gaussian functions for the logarithmic nonlinearity. We state
a simplified version of the result from \cite{Ferriere2020}, in the
sense that the assumptions considered there are more general. 
\begin{lemma}[From Lemma~3.2  in \cite{Ferriere2020}]\label{lem:separation ineq}
    For any $d \ge 1$, there exists $C_d>0$ such that the following
    holds. Let $N \in \mathbb{N}^*$ and consider, for $k=1,
    \ldots, N$, $q_k \in \mathbb{R}^d$,
    $\omega_k \in \mathbb{R}$, $\lambda_k>0$ and $\theta_k: \mathbb{R}^d
    \to\mathbb{R}$ a real measurable function. Define, for all $x \in
    \mathbb{R}^d$,   
    \begin{equation*}
      \varphi(x)=\sum_{k=1}^N \varphi_k(x) ,\quad \varphi_k(x)=e^{i
        \theta_k(x)+\omega_k-\lambda_k|x-q_k|^2}.
    \end{equation*}
If
\begin{equation*}
  \rho:=\left(\min _{k \neq
      j}\left|q_j-q_k\right|\right)^{-1}<\rho_0:=\min 
  \left(\frac{\sqrt{\lambda_+}}{\max (\sqrt{\delta \omega+1},
      \sqrt{\ln N})}, \sqrt{\frac{\lambda_-}{d+2}}\right) ,
\end{equation*}
where $\dis\lambda_==\max_{k}\lambda_k$, $\dis\lambda_-=\min_k\lambda_k$, and $\dis\delta \omega:=\max_{j, k}\left|\omega_k-\omega_j\right|$, then
\begin{equation*}
\left\|g\(\sum_{k=1}^N \varphi_k \)-\sum_{k=1}^N g\(\varphi_k \)\right\|_{L^2\left(\mathbb{R}^d\right)} \le C_d N^{\frac{3}{2}} \frac{\lambda_+}{\rho^{\frac{d}{2}+1} \sqrt{\lambda_-}} e^{-\frac{\lambda_-}{4 \rho^2}+\max _j \omega_j} .
\end{equation*}
\end{lemma}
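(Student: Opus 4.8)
The plan is to exploit the Gaussian structure together with the separation of the centres. Since the $q_k$ are pairwise at distance at least $1/\rho$, the balls $\mathcal B_k:=\{x\in\R^d:\ |x-q_k|<\tfrac1{2\rho}\}$ are pairwise disjoint, and on $\mathcal B_k$ the bump $\varphi_k$ is the only one that need not be exponentially small: for $j\ne k$ and $x\in\mathcal B_k$ one has $|x-q_j|\ge|q_j-q_k|-|x-q_k|\ge\tfrac1\rho-\tfrac1{2\rho}=\tfrac1{2\rho}$, hence $|\varphi_j(x)|=e^{\omega_j-\lambda_j|x-q_j|^2}\le e^{\max_l\omega_l}e^{-\lambda_-|x-q_j|^2}\le e^{\max_l\omega_l-\lambda_-/(4\rho^2)}$; the same estimate holds for \emph{every} $\varphi_j$ on the complement $\mathcal F:=\R^d\setminus\bigcup_k\mathcal B_k$, since there $|x-q_k|\ge\tfrac1{2\rho}$ for all $k$. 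Thus on each region of $\R^d$ at most one Gaussian fails to carry the factor $e^{-\lambda_-/(4\rho^2)}$, and the separation scale $\tfrac1{2\rho}$ is exactly what produces the exponent $-\lambda_-/(4\rho^2)$ in the final bound.

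Next I would record elementary pointwise bounds on $g(z)=\lambda z\log|z|^2$, in the same vein as Lemma~\ref{lem:CH}. On the one hand, the ``diagonal'' terms are explicit for genuine Gaussians, $g(\varphi_j)(x)=2\lambda\bigl(\omega_j-\lambda_j|x-q_j|^2\bigr)\varphi_j(x)$, so they carry no logarithmic singularity. On the other hand, writing $g(z+h)-g(z)=\lambda h\log|z+h|^2+\lambda z\log\frac{|z+h|^2}{|z|^2}$ and using $\bigl|\log|1+h/z|^2\bigr|\lesssim|h|/|z|$ gives, whenever $|h|\le|z|/2$,
\[
  \bigl|g(z+h)-g(z)\bigr|\le C|\lambda|\,|h|\,\bigl(1+\bigl|\log|z|\bigr|\bigr),
\]
whereas if $|h|>|z|/2$ then $|z|,|z+h|\le3|h|$ and, using that $s\mapsto s\bigl|\log s^2\bigr|$ has a sublinear modulus of continuity near $0$, $\bigl|g(z+h)-g(z)\bigr|\le|g(z+h)|+|g(z)|\le C|\lambda|(1+|h|)\bigl(1+\bigl|\log|h|\bigr|\bigr)$.

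Then, denoting $G:=g\bigl(\sum_k\varphi_k\bigr)-\sum_k g(\varphi_k)$, I would split $\R^d=\bigl(\bigcup_k\mathcal B_k\bigr)\cup\mathcal F$. On $\mathcal B_k$ I write $G=\bigl(g(\varphi_k+R_k)-g(\varphi_k)\bigr)-\sum_{j\ne k}g(\varphi_j)$ with $R_k:=\sum_{j\ne k}\varphi_j$, control the first bracket by the estimates above with $z=\varphi_k$, $h=R_k$ (using $|R_k(x)|\le Ne^{\max_l\omega_l-\lambda_-/(4\rho^2)}$, and keeping the genuine Gaussian decay of $R_k$ where it helps), and control $\sum_{j\ne k}|g(\varphi_j)|$ via the explicit formula together with $|x-q_j|\ge\tfrac1{2\rho}$. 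On $\mathcal F$ I use $G=g(\varphi)-\sum_j g(\varphi_j)$ with $\varphi=\sum_j\varphi_j$, bounding $|\varphi(x)|\le Ne^{\max_l\omega_l}e^{-\lambda_-|x-q_{k_0(x)}|^2}$ for the nearest centre $k_0(x)$ and hence $|g(\varphi)(x)|$ the same way. Squaring and integrating, using disjointness of the $\mathcal B_k$ and splitting $\mathcal F$ according to the nearest centre, everything reduces to Gaussian-tail integrals $\int_{|y|\ge1/(2\rho)}\mathrm{poly}(|y|)\,e^{-2\lambda_-|y|^2}\,dy$; because the hypothesis $\rho<\rho_0\le\sqrt{\lambda_-/(d+2)}$ forces $\lambda_-/\rho^2$ above a dimensional threshold, these are controlled by their boundary contribution, of order $\rho^{-(d+2)}\lambda_-^{-1}e^{-\lambda_-/(2\rho^2)}$. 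Summing over the $N$ balls (each carrying an extra $N^2$ through $|R_k|^2$ and the sum over $j\ne k$) and taking a square root yields a bound of the announced type, the two constraints defining $\rho_0$ being precisely what is needed to absorb the polynomial prefactors in $\rho^{-1},\lambda_\pm,\delta\omega$ and $\ln N$ into the exponential, and the fixed constant $|\lambda|$ into $C_d$.

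The main obstacle is the logarithmic singularity of $g$ at the origin: $g$ is not Lipschitz at $0$, so one cannot simply assert $g(\varphi_k+R_k)\approx g(\varphi_k)$. The delicate zone is the transition region near $\partial\mathcal B_k$, where $\varphi_k$ has decayed to a size comparable with the remainder $R_k$; there one must use the sublinear modulus of continuity of $s\log s^2$ together with the precise compensation between the polynomial weight $|x-q_k|$ entering through $\bigl|\log|\varphi_k|\bigr|$ and the Gaussian decay $e^{-\lambda_j|x-q_j|^2}$ of $R_k$ (which is tiny exactly where that weight is large). Keeping this polynomial bookkeeping subordinate to the gain $e^{-\lambda_-/(4\rho^2)}$ is what dictates the particular shape of the threshold $\rho_0$ in the statement.
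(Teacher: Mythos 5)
The paper does not prove this lemma: it is imported verbatim (in a simplified form) from Lemma~3.2 of \cite{Ferriere2020}, so there is no internal proof to compare yours against. Judged on its own terms, your strategy is the natural one and, as far as the overall architecture goes, the one used in the cited reference: partition $\R^d$ into disjoint balls of radius $\tfrac{1}{2\rho}$ around the centres plus a far region, observe that on each piece at most one Gaussian escapes the factor $e^{-\lambda_-/(4\rho^2)}$, treat the dominant bump with a near-Lipschitz estimate for $g$ away from the origin and the remainder with the modulus of continuity of $s\mapsto s\log s^2$ at $0$, and reduce everything to Gaussian tail integrals. The exponent $-\lambda_-/(4\rho^2)$, the prefactor $\lambda_+\rho^{-(d/2+1)}\lambda_-^{-1/2}$ (the factor $\lambda_+/\rho^2$ coming from $|\log|\varphi_k||$ on $\mathcal{B}_k$ via the first constraint in $\rho_0$, the factor $\rho^{1-d/2}\lambda_-^{-1/2}$ from the boundary layer of the tail integral), and the $N^{3/2}$ (an $\ell^2$ sum over $N$ cells of $N-1$ cross terms each) are all consistent with this plan.

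Two caveats. First, as written your estimate in the regime $|h|>|z|/2$, namely $|g(z+h)-g(z)|\le C|\lambda|(1+|h|)(1+|\log|h||)$, is too weak to close the argument: the additive $1$ in the first factor makes the right-hand side bounded below by a constant, whereas in the transition zone $|h|=|R_k|$ is exponentially small and the whole point is that the bound must vanish with $|h|$. What you need, and what the modulus of continuity actually gives for $3|h|\le e^{-1}$, is $|g(z+h)|+|g(z)|\le 2|\lambda|\sup_{0<s\le 3|h|}s|\log s^2|\lesssim |\lambda|\,|h|\,(1+|\log|h||)$, with no additive constant. Second, the decisive quantitative assembly is asserted rather than carried out: the polynomial weights in $\delta\omega$, $\ln N$ and $\lambda_+|x-q_k|^2$ cannot simply be ``absorbed into the exponential'' without degrading the constant $\tfrac14$ in the exponent, because the boundary contribution of the tail integrals produces exactly $e^{-\lambda_-/(4\rho^2)}$ with no margin to spare; they must instead be matched exactly against the allowed prefactor, which is where both constraints defining $\rho_0$ are consumed, and a crude $L^\infty$-times-volume bound on each $\mathcal{B}_k$ overshoots the stated constant. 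So this is a sound program, faithful in outline to the cited proof, but not yet a proof.
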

We apply this lemma with $N=2$, 
\begin{align*}
&\theta_k(t,x)=\arg b_k(t)+\frac{\phi_{\rm nl}(t,x)}{\eps}-\IM
      a_k(t)\frac{|x-q_k(t)|^2}{2\varepsilon},\\
&  \omega_k(t)=\log
  |b_k(t)|-\frac{d}{4}\log\varepsilon,\quad
\lambda_k(t)=\frac{\RE a_k(t)}{2\varepsilon}.
\end{align*}
For $t\in [0,T]\setminus{I^\varepsilon(T)}$, we have
\begin{equation*}
\(e^{-Ct}\lesssim\)  \rho\le \eps^{-\gamma}.
\end{equation*}
On the other hand, in view of \eqref{eq:b} and \eqref{eq:atau},
Lemma~\ref{lem:gaussianExpk} yields  $\lambda_k(t)\gtrsim
\eps^{-1}e^{-Ct} $ and  
\begin{equation*}
  \rho_0\gtrsim \frac{1}{\sqrt\eps}e^{-Ct},
\end{equation*}
so the condition from Lemma~\ref{lem:separation ineq} is fulfilled
provided that
$  e^{Ct}\lesssim \eps^{\gamma-1/2},$ 
which is granted with $0\le t\le c\log\frac{1}{\eps}$ for some
$c>0$ independent of $\eps\in (0,1)$, since $\gamma<1/2$. 
Lemma~\ref{lem:separation ineq}
then implies, for $t\in [0,T]\setminus{I^\varepsilon(T)}$, and
recalling Lemma~\ref{lem:bddoftau} to bound $\RE a_k$ from above,
\begin{equation}\label{eq:source-away}
  \frac{1}{\eps}\|\mathcal N_I^\eps(t)\|_{L^2}\lesssim
  \eps^{-\frac{d}{4}-\frac{1}{2}}
  e^{Ct}\exp\(-C\eps^{2\gamma-1}e^{-Ct}\), \quad t\in [0,T]\setminus
  I^\eps(T). 
\end{equation}

\subsection{Conclusion}
\label{sec:conclusion}

Putting \eqref{eq:source-crossing} and \eqref{eq:source-away}
together, for any $0<\eta<\min\(1,\frac{4}{d+2}\)$, 
\begin{equation*}
  \frac{1}{\eps}\int_0^T \|\mathcal N_I^\eps(t)\|_{L^2}dt \lesssim 
\eps^{\gamma-\frac{d\eta}{8}}e^{CT} + \eps^{-\frac{d}{4}-\frac{1}{2}}
  e^{CT}\exp\(-C\eps^{2\gamma-1}e^{-CT}\).
\end{equation*}
In view of the preliminary estimates from
Section~\ref{sec:prep}, we get
\begin{align*}
  \frac{d}{dt}\|w^\eps(t)\|_{L^2}
&\lesssim \|w^\eps(t)\|_{L^2} +
  \frac{1}{\eps}\|L^\eps(t)\|_{L^2}+
  \frac{1}{\eps}\|\mathcal N_I^\eps(t)\|_{L^2}\\
&\lesssim \|w^\eps(t)\|_{L^2} +
  \sqrt \eps e^{Ct}+
  \frac{1}{\eps}\|\mathcal N_I^\eps(t)\|_{L^2},
\end{align*}
hence from Gr\"onwall lemma and the above estimate, for any
$0<\eta<\min\(1,\frac{4}{d+2}\)$,  
\begin{align*}
 \sup_{t\in [0,T]} \|w^\eps(t)\|_{L^2}
&\lesssim 
  \sqrt\eps e^{CT} +   \frac{1}{\eps}\int_0^T e^{C(T-t)}\|\mathcal
  N_I^\eps(t)\|_{L^2}dt\\
&\lesssim \sqrt\eps e^{CT} +\eps^{\gamma-\frac{d\eta}{8}}e^{CT} +
\eps^{-\frac{d}{4}-\frac{1}{2}} 
  e^{CT}\exp\(-C\eps^{2\gamma-1}e^{-CT}\).
\end{align*}
Recall that this estimate is always true for $T>0$ independent of
$\eps\in (0,1)$, and that if $d=1$, then it holds more generally for
$0<T\le c_1\log\frac{1}{\eps}$ for some $c_1>0$ independent of
$\eps$. Up to decreasing $c_1$, the last term is always
$\O(\eps^{\gamma})$ (this is obvious for $T$ independent of $\eps$). 
This yields the
conclusion of Theorem~\ref{theo:superp}, by replacing $\gamma$ with
$\gamma-\frac{d\eta}{8}$. But since the condition $0<\gamma<1/2$ is
open, and $\eta>0$ is arbitrarily small, $\gamma-\frac{d\eta}{8}$ is
arbitrarily close to $1/2$, hence the result.

\bibliographystyle{abbrv}
\bibliography{references}

\end{document}